\documentclass[reqno,a4paper,dvipsnames]{amsart}

\usepackage{amsmath, amsthm, amsfonts, amssymb, amsgen, amstext, amsbsy, amsopn, amscd}

\usepackage{MnSymbol}

\usepackage{pgfkeys}
\usepackage{mathtools}
\usepackage[left=1in,right=1in,bottom=1.5in]{geometry}
\usepackage{enumerate}
\usepackage{relsize}
\usepackage{pst-node}

\usepackage{xcolor}
\usepackage{color}
\colorlet{linkequation}{BrickRed}

\usepackage[colorlinks=true,linkcolor=blue]{hyperref}

\usepackage{cleveref}
\usepackage{tikz-cd}
\usepackage{eqnarray,amsmath}

\def\multiset#1#2{\ensuremath{\left(\kern-.3em\left(\genfrac{}{}{0pt}{}{#1}{#2}\right)\kern-.3em\right)}}

\usetikzlibrary{arrows}

 \usepackage{latexsym}
 \usepackage{graphics}

\usepackage{lastpage}
\usepackage{fancyhdr}
\usepackage{multirow}

\allowdisplaybreaks
\usepackage{graphicx}
\graphicspath{ {F:/IMAGES/} }
	\definecolor{americanrose}{rgb}{1.0, 0.01, 0.24}
    	\definecolor{amethyst}{rgb}{0.6, 0.4, 0.8}
        \definecolor{antiquefuchsia}{rgb}{0.57, 0.36, 0.51}
        \definecolor{byzantium}{rgb}{0.44, 0.16, 0.39}
        	\definecolor{BrickRed}{rgb}{0.43, 0.21, 0.1}

\makeatletter
\def\oversortoftilde#1{\mathop{\vbox{\m@th\ialign{##\crcr\noalign{\kern3\p@}%
      \sortoftildefill\crcr\noalign{\kern3\p@\nointerlineskip}%
      $\hfil\displaystyle{#1}\hfil$\crcr}}}\limits}

\def\sortoftildefill{$\m@th \setbox\z@\hbox{$\braceld$}
  \braceld\leaders\vrule \@height\ht\z@ \@depth\z@\hfill\braceru$}

\makeatother

 \newcommand{\m}{\mathfrak{m} }
  \newcommand{\n}{\mathfrak{n} }
 \newcommand{\q}{\mathfrak{q} }

\newcommand{\kk}{\mathbb{K}}

 \newcommand{\M}{\mathfrak{M}}

 \newcommand{\grade}{\operatorname{grade}}
 \newcommand{\depth}{\operatorname{depth}}

 \newcommand{\Hom}{\operatorname{Hom}}

\newcommand{\proset}{\,\mathrel{\lower 4pt\hbox{$\scriptscriptstyle/$}
\mkern -14mu\subseteq }\,} %for proper subset

 \newtheorem{theorem}{Theorem}[section]
 \newtheorem{corollary}[theorem]{Corollary}
 \newtheorem{lemma}[theorem]{Lemma}
 \newtheorem{proposition}[theorem]{Proposition}

\usepackage{amsmath}
\newtheorem{notation}[theorem]{Notation}

 \theoremstyle{definition}
 
 \newtheorem{remark}[theorem]{Remark}
 \newtheorem{definition}[theorem]{Definition}

 \newtheorem{example}[theorem]{Example}

\allowdisplaybreaks

\title[Bounds On the Second Hilbert Coefficient and the Depth of the Associated Graded Ring] {Bounds On the Second Hilbert Coefficient and the Depth of the Associated Graded Ring}

\author{Clare D'Cruz, Mousumi Mandal and Shruti Priya }
\date{}

\thanks{AMS Classification 2020: 13A30, 13D40, 13E05, 13H10.}
\thanks{Key words and phrases: sectional genus, second Hilbert coefficients, parameter ideals, $S_{2}$-fication, associated graded ring .}

\address{Chennai Mathematical Institute, Plot H1 SIPCOT IT Park, Siruseri, Kelambakkam 603103,
Tamil Nadu, India} \email{clare@cmi.ac.in}
\address{Indian Institute of Technology Kharagpur, Kharagpur 721302, West Bengal, India} \email{mousumi@maths.iitkgp.ac.in}
\address{ Indian Institute of Technology Kharagpur, Kharagpur 721302, West Bengal, India} \email{shruti96312@kgpian.iitkgp.ac.in}
\begin{document}
\allowdisplaybreaks
\begin{abstract}
Let $(R, \m)$ be a  Noetherian local ring    of dimension $d \geq 1$ with $\depth R \geq d-1,$ and let $I$ be an $\m$-primary ideal. In this paper, 
we study bounds on the second Hilbert coefficient of $I$, denoted  by  $e_{2}(I)$.  Under the assumption that the associated graded ring $G(I)$ has depth at least $d-1,$ we first establish a lower bound for $e_{2}(I).$ We then extend several known results   
 from the Cohen-Macaulay case to this general setting and  obtain upper bounds for $e_{2}(I)$ in terms of the sectional genus denoted by $\mathrm{g}_{s}(I)$ and the Hilbert coefficients of $I$ and those of a minimal reduction $Q$ of $I$. 
 We further analyze the extremal case when $e_{2}(I)$ attains this bound and relate it to the  depth of $G(I)$.  
 In addition, for Buchsbaum local rings, we establish a sharp upper bound for $e_{2}(\m)$ using the technique of $S_{2}$-fication. 
 Finally, in the Cohen-Macaulay case, we give sufficient conditions to ensure good properties on the depth of $G(I)$ and  of $G(I^n)$ under the assumption that $e_{2}(I)=0$.
\end{abstract}
\maketitle
\section{Introduction}
Throughout this paper, $(R,\m)$  will denote   a Noetherian local ring of positive  dimension $d$ with infinite residue field $k=R/\m$ and $I$ will be  an $\m$-primary ideal  in $R$. In the past few decades, Hilbert coefficients $e_i(I)$ ($0 \leq i \leq d$) have played an important role in understanding  properties of the blowup algebras, namely,  the Rees algebra of $I$ denoted by $\mathcal{R}(I):=\displaystyle \bigoplus_{n \geq 0}I^{n}t^{n}$ (here $t$ is an indeterminate) and the associated graded ring of $I$ denoted by $G(I):=\displaystyle \bigoplus_{n \geq 0}I^{n}/I^{n+1}$.

Let   $(R, \m)$ be a Cohen-Macaulay ring. One of the first results relating the Hilbert coefficients was given  by Northcott \cite{northcott}. He proved that $e_{1}(I) \geq e_{0}(I)-\lambda(R/I).$
Later, Huneke \cite[Theorem 2.1]{huneke}, and Ooishi \cite{ooishi}  independently proved that Northcott's  inequality is an equality if and only if $I^2=QI$ for any minimal reduction $Q$ of $I$.  
Furthermore,  $G(I)$ is Cohen-Macaulay and if $d \geq 2$, then $\mathcal{R}(I)$ is Cohen-Macaulay. 
Motivated by these results, several researchers obtained bounds for $e_{1}(I)$ in terms of $e_{0}(I)$ 
(see \cite{elias 2}, \cite{elias 3}, \cite{krishna}, \cite{km}, \cite{rossi}, \cite{rv}). 
Rhodes was the first to give an upper bound for $e_2(I)$ in terms of $e_1(I)$ (see \cite[Proposition 6.1 (iv)]{rhodes}), where he proved that $e_{2}(I) \leq \binom{e_{1}(I)+1}{2}.$
Later  Narita \cite{narita}  proved that $e_{2}(I) \geq 0$. 
   Kirby and  Mehran in \cite{km} obtained some interesting results on the Hilbert coefficients  for $d \leq 2$ and as a consequence reproved Rhodes' result on $e_2(I)$. Recently, in \cite[Theorem 5.1]{dung},  Dung, Elias, and Hoa  gave another upper bound for $e_2(I)$. They showed that $e_{2}(I) \leq \binom{e_{0}(I)-b+1}{3}$, where $b$ is a positive integer such that $I \subseteq \m^{b}$.

When   $(R, \m)$  is  not Cohen-Macaulay,  obtaining  bounds for $e_i(I)$  is significantly more difficult. 
The work of Vasconcelos \cite{vasconcelos}  on $e_1(I)$,    which he called the Chern number, revived interest in studying the Hilbert coefficients. 
 In \cite[Theorem 3.2]{gotoozeki}, Goto and Ozeki proved that if $\dim R=2$ and 
 $\depth R>0$, then   for any parameter ideal $Q$ we have   $e_{2}(Q) \leq 0$. Later, McCune extended this result and showed that if $Q$ is a parameter ideal in a Noetherian local ring of dimension $d \geq 2$ with depth at least $d-1$,  then $e_2(Q) \leq0$. Further, she proved that $e_{2}(Q)=0$ if and only if $\depth G(Q) \geq d-1 $ and $n(Q):=\min \{k: H_Q(n) =P_Q(n) \text{ for all } n > k\} \geq 2-d$ \cite[Theorem 3.5 (ii)]{lori}. 
Moreover,  if  $e_2(Q)=0$, then $e_i(Q)=0$ for all $i \geq 2$ (see \cite[Theorem 3.5 (iii)]{lori}).
Thus McCune's result highlights the importance of understanding the vanishing of $e_2(Q)$.
  Recently, in   \cite[Theorem 3.1]{vdtrung}, Trung  proved that if  $d\geq 2$ and  $\depth R \geq d-1$,  then   $e_{2}(I) \leq \displaystyle \sum_{n \geq 1}n \lambda(I^{n+1}/QI^{n}),$ where $Q$ is a minimal reduction of $I.$   In the case  $R$ is a Buchsbaum local ring,  a lower bound for $e_{2}(I)$ was obtained by Goto and Ozeki  in \cite[Theorem 2.11]{sk}. 
 
  These results motivated us to investigate   bounds for $e_2(I)$  in the case $I$ is an $\m$-primary ideal  in a Noetherian local ring of dimension $d \geq 2$.  
The  objective of this paper is twofold. In the first part, we  establish bounds for the second Hilbert coefficient  of an $\m$-primary ideal $I$ in  Noetherian local ring of dimension $d \geq 2$ in terms of the \textit{sectional genus}  of $I$.  The notion of sectional genus of an ideal $I$, denoted by $\mathrm{g}_{s}(I)$ was introduced by Ooishi in \cite{ooishi},  and is defined as:
 \begin{center}
     $\mathrm{g}_{s}(I)=\lambda(R/I)-e_{0}(I)+e_{1}(I).$
 \end{center} 
 For an ideal $I$, the sectional genus is  an important invariant associated with it. In Cohen-Macaulay local rings, it is closely related to  the structure of the associated graded ring and the Hilbert coefficients. For example, if $\mathrm{g}_{s}(I)=0,$ then $G(I)$ is Cohen-Macaulay. On the other hand, in the  non-Cohen–Macaulay case, very little  is known about the behavior and significance of sectional genus.
In the second part, assuming that $R$ is Cohen-Macaulay, we investigate how the vanishing of $e_{2}(I)$  yields interesting results on the depth of the associated graded ring of $I.$  

We now discuss the organization of the paper and  summarize  the main results.  The paper is organized into six sections.
Section \ref{section2} is devoted to definitions, preliminary concepts, and notations.
 In this section, we also  extend Huckaba's formulas for the Hilbert coefficients 
 \cite[Corollary 2.10]{sam} to  local rings $R$ of dimension $d \geq 2$ with $\depth R \geq d-1$ (see Corollary \ref{e_i}). This result will play an important role  in the subsequent sections.

In Section \ref{section3}, we establish  a lower bound for $e_2(I).$ From a result of Rossi and Valla (see \cite[Theorem~3.1(a)]{rv}), it follows that  if $I$ is an $\m$-primary ideal in a two-dimensional Cohen-Macaulay ring, then $e_2(I) \geq e_{1
}(I)-e_{0}(I)+ \lambda(R/\widetilde{I}),$ where $\widetilde{I}$ is the \textit{Ratliff-Rush closure} of $I$ (see Definition \ref{ratliffrushdef}). For $d \ge 2$, under the assumption that $R$ is Cohen-Macaulay and $\depth ~G(I) \geq d-1$,    a lower bound for $e_2(I)$ was obtained  by Marley \cite[Corollary~2(3)]{marley}.  We obtain a lower bound for $e_2(I)$   for any $\m$-primary  ideal $I$ in a Noetherian ring  of dimension $d \ge 2$ in terms of the sectional genus of $I$, $e_0(I)$ and $e_1(I)$. We also recover Marley's result. We prove:
\begin{proposition} (Proposition \ref{newandfinalproofoflowerbound})
      Let $(R,\m)$ be a  local ring of dimension $d\geq 2$ and let $I$ be an $\m$-primary ideal.  If $\depth G(I) \geq d-1,$ then we have the following:
 \begin{enumerate} [\normalfont (i)]
     \item 
      For any minimal reduction $Q$ of $I$, 
      \begin{equation*} 
      e_{2}(I) \geq  \mathrm{g}_{s}(I)-\mathrm{g}_{s}(Q)+e_{1}(Q)\dbinom{e_{0}(I)-e_{1}(I)}{2}.
       \end{equation*}
       
              \item 
      \cite[Corollary~2 (3)]{marley}
       If $R$ is Cohen-Macaulay, then 
       $e_{2}(I)\geq e_{1}(I)-e_{0}(I)+\lambda(R/I)$.    
 \end{enumerate}
 \end{proposition}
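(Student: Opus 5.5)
The plan is to work entirely through the Huckaba-type formulas of Corollary \ref{e_i}, which extend \cite[Corollary 2.10]{sam} to rings with $\depth R\geq d-1$. The hypothesis $\depth G(I)\geq d-1$ forces $\depth R\geq d-1$, so that corollary applies. Fix a minimal reduction $Q$ of $I$ and put $v_n=\lambda(I^{n+1}/QI^n)$ for $n\geq 0$. I expect Corollary \ref{e_i}, specialised to the case $\depth G(I)\geq d-1$ (where the correction terms coming from the Koszul homology in positive degrees should disappear), to give expressions of the form
\begin{equation*}
e_1(I)-e_1(Q)=\sum_{n\geq 0} v_n, \qquad e_2(I)-c(Q)=\sum_{n\geq 1} n\, v_n .
\end{equation*}
Here $c(Q)$ is a term built from the parameter ideal $Q$, essentially $e_2(Q)$ or a lengths-of-$H^0_{\m}$ contribution involving $e_1(Q)$. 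The first step is to write these two identities down carefully from Corollary \ref{e_i}.

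Next I would rewrite the left-hand side of the claimed inequality. Since $e_0(Q)=e_0(I)$, we have $\mathrm{g}_s(Q)=\lambda(R/Q)-e_0(I)+e_1(Q)$. Therefore
\begin{equation*}
\mathrm{g}_s(I)-\mathrm{g}_s(Q)=e_1(I)-e_1(Q)-\lambda(I/Q)=\sum_{n\geq 1} v_n,
\end{equation*}
using $v_0=\lambda(I/Q)$. The comparison $\sum_{n\geq1} n v_n\geq \sum_{n\geq 1}v_n$ then gives
\begin{equation*}
e_2(I)\geq \mathrm{g}_s(I)-\mathrm{g}_s(Q)+c(Q).
\end{equation*}
So (i) reduces to showing $c(Q)\geq e_1(Q)\binom{e_0(I)-e_1(I)}{2}$.

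This last comparison is the step I expect to be the main obstacle. We know $e_1(Q)\leq 0$ when $\depth R\geq d-1$, so the target bound is nonpositive. I would first try to express $c(Q)$ via McCune's description of the Hilbert function of a parameter ideal in a ring of depth $d-1$ \cite{lori}. That description involves $\lambda(H^0_{\m}(R/(x_1,\dots,x_{d-1})))$, which equals $-e_1(Q)$ for a suitable superficial sequence. I would then bound the number of degrees in which this contribution is nonzero by a reduction-number or postulation-type estimate. My guess is that $e_0(I)-e_1(I)$ enters through a bound on the index where the Hilbert function of $R/I^{n+1}$ and the Hilbert polynomial start to agree, and summing a linear-in-$n$ error over that range produces the binomial coefficient.

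For (ii), assume $R$ is Cohen-Macaulay. Then $Q$ is generated by a regular sequence, so $e_1(Q)=0$ and $\lambda(R/Q)=e_0(I)$, hence $\mathrm{g}_s(Q)=0$. The correction term $c(Q)$ vanishes as well. Part (i) then becomes $e_2(I)\geq \mathrm{g}_s(I)=e_1(I)-e_0(I)+\lambda(R/I)$, which is Marley's bound \cite[Corollary~2 (3)]{marley}.
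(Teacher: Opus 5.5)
\textbf{The identities in your first step are false outside the Cohen--Macaulay case, and the step you call the main obstacle is the whole proof.}

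Under $\depth G(I)\ge d-1$, only the terms $w_n(Q,I)$ drop out of Corollary~\ref{e_i}. The terms $u_n:=\lambda(U_I(d;n))$ remain, so
\[
e_1(I)=\sum_{n\ge 0}(v_n-u_n),\qquad e_2(I)=\sum_{n\ge 1}n(v_n-u_n),\qquad e_1(Q)=-\sum_{n\ge 0}\lambda(U_Q(d;n)),
\]
where $U_Q(d;n)\subseteq U_I(d;n)$, in general strictly. So the correction to $\sum_{n\ge1} nv_n$ depends on $I$ through $I^n$, not on $Q$ alone. Likewise $\mathrm{g}_s(I)-\mathrm{g}_s(Q)=\sum_{n\ge1}v_n$ fails: in Example~\ref{optimalexample} the left side is $-1<0$. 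The correct bookkeeping gives
\[
e_2(I)=\mathrm{g}_s(I)-\mathrm{g}_s(Q)+e_1(Q)+u_0+\sum_{n\ge 1}(n-1)(v_n-u_n).
\]
Hence (i) amounts to an upper bound for $\sum_{n\ge1}(n-1)u_n$. That is exactly the step you leave open. It cannot come from McCune's description of $H_Q$ or a postulation estimate for $Q$, because $u_n$ involves $I^n$.

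\textbf{The missing ingredients.} They live in $R_{d-1}=R/(x_1,\dots,x_{d-1})$, with $x_1,\dots,x_{d-1}$ an $R$-regular sequence.
\begin{enumerate}
\item For every $n$, $u_n\le\lambda(0:_{R_{d-1}}x_d)\le\lambda(H^0_{\mathfrak{m}}(R_{d-1}))=-e_1(Q)$.
\item $u_n=0$ for $n\ge e_0(I)-e_1(Q)$. This is Lemma~\ref{pp}: embed $\bigoplus_n\bigl((0:x_d)\cap I^nR_{d-1}+I^{n+1}R_{d-1}\bigr)/I^{n+1}R_{d-1}$ into $H^0_{G_+}(G(IR_{d-1}))$, and use the Mandal--Nanduri bound $a_0(G(IR_{d-1}))\le e_0(I)-e_1(Q)-1$.
\end{enumerate}
Put $c=e_0(I)-e_1(Q)-1$. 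Then $\sum_{n\ge1}(n-1)u_n\le -e_1(Q)\binom{c}{2}$. Drop $u_0$ and $\sum(n-1)v_n$, and absorb the leftover $e_1(Q)$ using $1+\binom{c}{2}\le\binom{c+1}{2}$ (valid since $c\ge1$ whenever $e_1(Q)<0$). This yields (i) with $\binom{e_0(I)-e_1(Q)}{2}$.

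Your instinct to bound the range of degrees carrying a correction of size at most $-e_1(Q)$ is right. However, that range is controlled by $e_0(I)-e_1(Q)$, not by $e_0(I)-e_1(I)$.

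\textbf{The displayed statement contains a misprint, and as printed it is false.} The proof in Section~\ref{section3} and the numerics of Example~\ref{optimalexample} use $e_1(Q)$ in the binomial, not $e_1(I)$. A counterexample to the printed version:
\begin{itemize}
\item Take $k$ an infinite field, $C=k[[t^5,t^6,t^7]]$, and $M=C/((t^6,t^7)C+\mathfrak{m}_C^2)\cong k[\epsilon]/(\epsilon^2)$.
\item Let $A=C\ltimes M$ (idealization), $R=A[[X]]$, $I=\mathfrak{m}_R$ and $Q=(t^5,X)R$.
\item Then $G(I)\cong G(\mathfrak{m}_A)[T]$, so $\depth G(I)\ge 1$.
\item One computes $e_0(I)=5$, $e_1(I)=4$, $e_2(I)=-1$, $e_1(Q)=-2$, $\mathrm{g}_s(I)=0$ and $\mathrm{g}_s(Q)=-1$.
\item The printed right-hand side is $1>e_2(I)$, while the corrected one is $-41$.
\end{itemize}
So no argument can produce $e_0(I)-e_1(I)$ there.

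\textbf{Part (ii) is fine.} In the Cohen--Macaulay case every $U_I(d;n)=0$, your identities hold, and $\sum_{n\ge1}nv_n\ge\sum_{n\ge1}v_n$ gives Marley's bound directly.
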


In  Section \ref{section4}, our main result is on the upper bound for $e_2(I)$ for an $\m$-primary ideal $I$ in  a Noetherian local ring $R$ of dimension $d \geq 2$, under the assumption that $\depth R \geq d-1$.  We first focus on $\m$-primary parameter ideals. In \cite[Theorem 3.5]{lori} McCune  proved that if $Q$ is a parameter ideal and $\depth R \geq d-1$, then $e_2(Q)$ is non-positive. By an example (see  \cite[Example 3.7]{lori}) she showed that the assumption on depth cannot be dropped. In this paper we give an alternate proof of McCune's result (\cite[Theorem 3.5]{lori}). We also give an upper bound for $e_2(Q)$ under the weaker assumption  that $\depth R \ge d-2$ (see Proposition \ref{parameterbounddepthzero}). 

We next study $e_2(I)$, where $I$ is an $\m$-primary ideal and $d \geq 2$. Since the reduction number of $I$ 
with respect to a minimal reduction $Q$, denoted by $r_Q(I)$  plays a fundamental role in the course of our 
study on $e_2(I)$, we first obtain an upper bound for $r_Q(I)$. Several authors have obtained bounds for the 
reduction number (see Definition \ref{reductiondef}) of $\m$-primary ideals    in terms of the Hilbert 
coefficients.  In \cite{gghv}, the authors gave  a bound  for the reduction number of $\m$-primary ideals  
in two dimensional Buchsbaum ring. An upper bound for $r_Q(I)$  in terms of the co-length of $I$ and the Hilbert coefficients was obtained by Rossi in the case $R$ is Cohen-Macaulay and $d\le 2$ \cite[Corollary 1.5]{rossi1}.  Bounds for  the reduction number of  $\m$-primary ideals in a Cohen-Macualay ring have been given in   \cite{rv} and   \cite{mandal-saloni}. 
However, in the case when $R$ is not Cohen-Macaulay it is considerably  more challenging to obtain bounds for $r_Q(I)$. In this paper,   we  give an upper bound for $r_Q(I)$ in the case when $\depth R \ge d-1$ and $\depth G(I) \geq d-1$. This is a significant progress in this direction (see Lemma~\ref{2-prime}).

We then focus on upper bounds for $e_{2}(I).$ In \cite[Proposition 3.2]{ozeki}, Ozeki gave an upper bound 
for $e_2(I)$, for $\m$-primary ideals in a  Cohen–Macaulay  local ring in terms of the sectional genus.  In particular he proved that $e_{2}(I) \leq \binom{\mathrm{g}_{s}(I)+1}{2}.$  
We extend Ozeki's result on the upper bound of $e_2(I)$ to the case to the case when $\depth R \geq d-1$. We need to involve the sectional genus  $\mathrm{g}_s(Q)$ and $e_1(Q)$, where $Q$ is a minimal reduction of $I$. 

We also give upper and lower bounds for 
the higher Hilbert coefficients  when the upper bound for $e_2(I)$ is attained.
The main result in this section on $e_2(I)$ is:
\begin{theorem} \label{theoremA} (Theorem \ref{finalupperbounde2})
Let $(R,\m)$ be a Noetherian local ring of dimension $d\geq 2$ and  $\depth R \geq d-1.$  Then  for any $\m$-primary ideal  $I$   and a  minimal reduction $Q=(x_1,\ldots,x_{d})$ of $I$, 
\begin{equation} \label{introeq1}
    e_2(I) \leq 
    \binom{\mathrm{g}_s(I) - \mathrm{g}_s(Q) - (e_{0}(I)-e_{1}(Q))e_1(Q) + 1}{2}-\displaystyle \sum_{n=1}^{e_{0}(I)-e_{1}(Q)-1}
           n\lambda(U_{I}(d;n)).
\end{equation}
    
 Further, suppose equality holds in  (\ref{introeq1}), then the following are true: 
 \begin{enumerate}[\normalfont(i)]
     \item    $\depth G(I) \geq d-1.$
     \item    If $I$ is not a parameter ideal  then 
          $r_{Q}(I)=\mathrm{g}_s(I) - \mathrm{g}_s(Q) - (e_{0}(I)-e_{1}(Q))e_1(Q)+1.$
          \item   Set $t=\mathrm{g}_{s}(I)-\mathrm{g}_s(Q) - (e_{0}(I)-e_{1}(Q))e_1(Q).$ For $3 \leq i \leq d,$ we have 
   \[\displaystyle\binom{t + 1}{i}+e_{1}(Q)\binom{e_{0}(I)-e_{1}(Q)}{i} \leq e_{i}(I) \leq\displaystyle\binom{t + 1}{i}.\]
 \end{enumerate}
\end{theorem}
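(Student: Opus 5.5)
We follow the pattern of Ozeki's proof in the Cohen–Macaulay case, with the non-Cohen–Macaulay defect of $R$ isolated in the invariants of $Q$. The main tool is the extension of Huckaba's formulas (Corollary \ref{e_i}). For $\depth R \geq d-1$ and a minimal reduction $Q=(x_1,\ldots,x_d)$, it expresses $e_1(I)$ and $e_2(I)$ through the lengths $\lambda(I^{n+1}/QI^n)$, corrected by the modules $U_I(d;n)$ (the colon-type modules measuring the failure of $x_1,\ldots,x_d$ to be a superficial regular sequence on $G(I)$) and by the terms $e_1(Q)$, $\mathrm{g}_s(Q)$ coming from the parameter ideal. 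Schematically, we will use
\[
e_1(I)-e_1(Q)=\sum_{n\geq 0}\lambda(I^{n+1}/QI^n)-\sum_{n\ge1}\lambda(U_I(d;n)),
\]
\[
e_2(I)\leq \sum_{n\geq 1} n\,\lambda(I^{n+1}/QI^n)-\sum_{n\geq 1} n\,\lambda(U_I(d;n))+(\text{terms in } e_1(Q)).
\]
Equality in the second relation holds exactly when $\depth G(I)\geq d-1$; this is Trung's inequality refined to keep track of $U_I(d;n)$.

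\textbf{Step 1 (rewrite in terms of $t$).} Using $\mathrm{g}_s(I)=\lambda(R/I)-e_0(I)+e_1(I)$, $\lambda(R/Q)=e_0(Q)-\chi$-type corrections, and $e_0(I)=e_0(Q)$, convert the $e_1$ identity into
\[
\sum_{n\geq1}\lambda(I^{n+1}/QI^n)=t+(\text{nonnegative }U\text{-terms}),
\]
where $t=\mathrm{g}_s(I)-\mathrm{g}_s(Q)-(e_0(I)-e_1(Q))e_1(Q)$. The term $(e_0(I)-e_1(Q))e_1(Q)$ is what remains after subtracting the contributions of $U_I(d;n)$ for $1\le n\le e_0(I)-e_1(Q)-1$. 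This is where Lemma~\ref{2-prime} enters: it bounds the range of $n$ on which the $U$-modules and the $I^{n+1}/QI^n$ can be nonzero. That range is governed by $r_Q(I)\le t+1$, which requires $\depth G(I)\ge d-1$; otherwise we use the inequality version.

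\textbf{Step 2 (the binomial estimate).} Set $a_n=\lambda(I^{n+1}/QI^n)$. Whenever $I^{n+1}\ne QI^n$ we have $a_n\ge1$, and $a_n=0$ for $n\ge r_Q(I)$. Hence $\sum_{n\ge1}a_n\le t$ gives
\[
\sum_{n\ge 1} n\,a_n\le 1+2+\cdots+t=\binom{t+1}{2},
\]
since the weighted sum is maximized when each of $a_1,\ldots,a_t$ equals $1$. Here we need $a_n\ge1$ for $n<r_Q(I)$, which is standard because $I^{n+1}=QI^n$ propagates to all higher $n$. Combining this with Step 1 gives \eqref{introeq1}.

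\textbf{Equality cases.} Equality forces equality in Trung-type inequality, giving (i) $\depth G(I)\ge d-1$. It also forces $a_1=\cdots=a_t=1$ and $a_n=0$ afterwards, so $r_Q(I)=t+1$ when $I\neq Q$, which is (ii). For (iii), the higher Huckaba formulas under $\depth G(I)\geq d-1$ read
\[
e_i(I)=\sum_{n}\binom{n}{i-1}a_n-(\text{$U$/$Q$ corrections}).
\]
With $a_n=1$ for $n\le t$, the main term is $\sum_{n=i-1}^{t}\binom{n}{i-1}=\binom{t+1}{i}$. The corrections are nonpositive and bounded below by the $e_1(Q)\binom{e_0(I)-e_1(Q)}{i}$ term, since $e_1(Q)\le0$ (McCune). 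This yields both sides of (iii).

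The main obstacle is Step 1: pinning down exactly how $e_1(Q)$ and $\mathrm{g}_s(Q)$ enter the Huckaba-type formula, and justifying the truncation of the $U$-sum at $e_0(I)-e_1(Q)-1$. I would first try to bound $\lambda(U_I(d;n))$ by $-e_1(Q)$ for each $n$, via $\lambda(H^{d-1}_\m(R))$-type estimates, and check that the vanishing range matches Lemma~\ref{2-prime}.
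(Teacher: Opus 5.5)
There is a genuine gap, in the step that carries the proof. Step~2 applies Lemma~\ref{ozekinumericallemma} to $a_n=\lambda(I^{n+1}/QI^n)$ computed in $R$. That requires $\sum_{n\ge1}a_n\le t$, and this is false in general. Your own Step~1 points the other way, since it writes $\sum_{n\ge1}a_n$ as $t$ \emph{plus} nonnegative terms. In the Cohen--Macaulay case, where $t=\mathrm{g}_s(I)$, Huckaba's inequality $e_1(I)\le\sum_{n\ge0}\lambda(I^{n+1}/QI^n)$ (strict when $\depth G(I)<d-1$) gives $\sum_{n\ge1}a_n\ge e_1(I)-\lambda(I/Q)=t$.

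Concretely, take $I=(x^4,x^3y,xy^3,y^4)\subseteq k[[x,y]]$ and $Q=(x^4,y^4)$. Then $t=\mathrm{g}_s(I)=11-16+6=1$. But $I^2/QI\cong k^2$, spanned by the images of $x^6y^2$ and $x^2y^6$, so $a_1=2>t$.

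The theorem does not assume $\depth G(I)\ge d-1$. Invoking $r_Q(I)\le t+1$ is circular, because Lemma~\ref{2-prime}(ii) is deduced from the very estimate you need, and only under that depth hypothesis. Your displayed formula for $e_1(I)-e_1(Q)$ is also not an identity in $R$: Corollary~\ref{e_i} carries the terms $w_n(Q,I)$, and $e_1(Q)$ is governed by $U_Q(d;n)$, not $U_I(d;n)$.

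The missing idea is to do the counting in the one-dimensional quotient. With $x_1,\dots,x_{d-1}$ regular and $R_{d-1}=R/(x_1,\dots,x_{d-1})$, set $b_n=\lambda(I_{d-1}^{n+1}/Q_{d-1}I_{d-1}^n)$.

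\begin{itemize}
\item For $d=2$, Lemma~\ref{2-II} is an exact identity: $e_2(I)=\sum_{n\ge1}n\,[b_n-\lambda(U_I(2;n))]-\sum_{n\ge0}\lambda((I^{n+1}:x_1)/I^n)$.
\item For $d\ge3$, every quantity in the statement is unchanged modulo $x_1$ (Proposition~\ref{hibertcoeffsup}, Lemma~\ref{lemmaa1}, and $U_I(d;n)=U_{I'}(d-1;n)$), so one inducts.
\item The $b_n$ satisfy Ozeki's conditions, and $\sum_{n\ge1}b_n\le t$ holds with no hypothesis on $G(I)$. Lemma~\ref{formulahilbcoeffindimone} gives $\sum_{n\ge1}b_n=\mathrm{g}_s(I)-\mathrm{g}_s(Q)+\sum_{n\ge0}\lambda(U_I(d;n))-\sum_{n\ge0}\lambda(U_Q(d;n))$.
\item The term $-(e_0(I)-e_1(Q))e_1(Q)$ in $t$ is exactly the bound $\sum_{n\ge0}\lambda(U_I(d;n))\le(e_0(I)-e_1(Q))\lambda(H^0_{\m}(R_{d-1}))$, with $\lambda(H^0_{\m}(R_{d-1}))=-e_1(Q)$.
\end{itemize}

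That bound needs more than the pointwise estimate $\lambda(U_I(d;n))\le-e_1(Q)$ you propose. It also needs the vanishing $(0:_{R_{d-1}}x_d)\cap I_{d-1}^n=0$ for $n\ge e_0(I)-e_1(Q)$ (Lemma~\ref{pp}). That vanishing comes from the bound $e_0(I)-e_1(Q)-1$ on the end degree of $H^0_{G_+}(G(I_{d-1}))$, not from $r_Q(I)$.

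By contrast, truncating the subtracted sum $\sum n\lambda(U_I(d;n))$ in the displayed bound is harmless, since its terms are nonnegative. The same two facts about $U_I(d;n)$ give the lower bound in (iii). Also, $e_1(Q)\le 0$ comes from $e_1(Q)=-\lambda(H^0_{\m}(R_{d-1}))$, not from McCune, whose result concerns $e_2(Q)$.

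In the equality case the colon term must vanish, so $x_1^*$ is $G(I)$-regular, and Sally's descent gives (i). Only then does $\depth G(I)\ge d-1$ identify $a_n$ with $b_n$ via Valabrega--Valla, which is what (ii) and (iii) use.
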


The differences  $e_i(I) - e_i(Q)$, where $I$ is $\m$-primary and $Q$ is a minimal reduction of $I$ is also of interest. This is motivated by  a result in \cite[Theorem 2.6]{ghezzihong}, where they study the deviation of $e_{1}(I)$ from $e_{1}(Q)$. As an immediate consequence of our main result we 
obtain an upper bound for the difference  $e_2(I)-e_2(Q)$, where $I$ is an $\m$-primary ideal in a Buchsbaum local ring of dimension $d \geq 2$ and $Q$ is a minimal reduction of $I$. 
More precisely, we 
 prove the following result:
 \begin{proposition}
     (Proposition \ref{deviationbound})
     Let $(R,\m)$ be a Buchsbaum local ring of dimension $d \geq 2,$ let $I$ be an $\m$-primary ideal with minimal reduction $Q.$ 
 Then 
 %Proposition \ref{deviationbound} implies that:
\begin{equation*}
 e_{2}(I)-e_{2}(Q) \leq \binom{\mathrm{g}_{s}(I)+(e_{0}(I)-e_{1}(Q))\left(1-e_{1}(Q)\right)}{2}.
\end{equation*}
 \end{proposition}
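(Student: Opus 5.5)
The plan is to derive this as a consequence of Theorem \ref{theoremA}, combined with the known behaviour of Hilbert coefficients of parameter ideals in Buchsbaum rings. Since a Buchsbaum ring with $d\geq 2$ has $\depth R\geq 1$, we only get $\depth R\geq d-1$ when the local cohomology modules $H^i_{\m}(R)$ with $i<d-1$ vanish. So the inequality (\ref{introeq1}) cannot be applied directly in general. The first step is therefore to decide between two routes: reduce to the case $\depth R\geq d-1$ by going modulo a superficial sequence, or use the Buchsbaum hypothesis to rewrite the terms of (\ref{introeq1}) and the invariants of $Q$ explicitly.

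For a parameter ideal $Q$ in a Buchsbaum ring, the classical formulas (Schenzel, Goto) give
\[
e_i(Q)=(-1)^i\sum_{j=0}^{d-i}\binom{d-i-1}{j-1}h^j(R),\qquad h^j(R)=\lambda(H^j_{\m}(R)) .
\]
In particular $e_1(Q)=-\sum_{j=1}^{d-1}\binom{d-2}{j-1}h^j(R)\leq 0$, and $e_2(Q)$ is given by an analogous expression. We also have $\lambda(R/Q)=e_0(Q)+I(R)$, where $I(R)=\sum_{j=0}^{d-1}\binom{d-1}{j}h^j(R)$ is the Buchsbaum invariant, so that
\[
\mathrm{g}_s(Q)=\lambda(R/Q)-e_0(Q)+e_1(Q)=I(R)+e_1(Q).
\]

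Next, I will apply (\ref{introeq1}) in the form I expect it to be proved. One could take a general element $x\in Q$, pass to $R/(x)$ (which is again Buchsbaum), and induct; but $e_2$ is preserved under such a reduction only for $d\geq 3$, with a correction involving $H^0_{\m}$. The cleaner route is to assume $\depth R\geq d-1$ in the main case, which is exactly where Theorem \ref{theoremA} applies, and to treat the general case by the same inequality applied after killing lower cohomology (I suspect the paper intends $\depth R\geq d-1$ implicitly). Dropping the nonnegative sum $\sum n\lambda(U_I(d;n))$ gives
\[
e_2(I)\leq \binom{\mathrm{g}_s(I)-\mathrm{g}_s(Q)-(e_0(I)-e_1(Q))e_1(Q)+1}{2}.
\]
Now substitute $-\mathrm{g}_s(Q)=-I(R)-e_1(Q)$. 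The argument of the binomial coefficient becomes
\[
\mathrm{g}_s(I)+(e_0(I)-e_1(Q))(1-e_1(Q))-e_0(I)+e_1(Q)-I(R)-e_1(Q)+1+\ldots
\]
and I will carefully collect terms to compare it with the target $\mathrm{g}_s(I)+(e_0(I)-e_1(Q))(1-e_1(Q))$, with the binomial lowered by one.

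Finally, subtracting $e_2(Q)$ requires $-e_2(Q)$ to be absorbed by the discrepancy between the two binomials. With $N$ the target argument,
\[
\binom{N}{2}-\binom{M+1}{2}
\]
should be at least $-e_2(Q)$, since $e_2(Q)\leq 0$ by McCune, and its magnitude is bounded by the local cohomology lengths. I expect this bookkeeping to be the main obstacle. The inequality needed is
\[
\binom{M+1}{2}-e_2(Q)\leq\binom{N}{2},
\]
which should follow from $N-M-1\geq -e_1(Q)-e_0(I)+\ldots$ together with $|e_2(Q)|\leq$ (a combination of $h^j$) $\leq$ a multiple of $|e_1(Q)|$. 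If this comparison fails, I will instead use the stronger form of (\ref{introeq1}) with Trung's bound $e_2(I)\leq\sum n\lambda(I^{n+1}/QI^n)$, adjusted by the Buchsbaum correction, which is where $e_2(Q)$ naturally enters.
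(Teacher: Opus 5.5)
Your plan has a genuine gap: it only covers Buchsbaum rings with $\depth R\geq d-1$, but the statement allows arbitrary depth.

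\textbf{Where the plan fails.} The opening claim that a Buchsbaum ring of dimension $d\geq 2$ has positive depth is false. The ring $R_{d,1}$ of Example~\ref{example depth 0} is Buchsbaum of depth $0$, and it is exactly the example used to show sharpness. So the paper does not implicitly assume $\depth R\geq d-1$. Likewise, $e_2(Q)\leq 0$ (McCune) needs $\depth R\geq d-1$; for $d=2$ and depth $0$ one has $e_2(Q)=\lambda(H^0_{\m}(R))>0$.

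Your treatment of the general case, ``killing lower cohomology'', only works for $H^0_{\m}(R)$, by passing to $R/H^0_{\m}(R)$. No quotient removes $H^j_{\m}(R)$ for $1\leq j\leq d-2$. Hence for $d\geq 3$ and $\depth R\leq d-2$, Theorem~\ref{finalupperbounde2} never becomes applicable in your argument. The fallback via Trung's bound has the same depth hypothesis.

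In the case you do treat, the bookkeeping you flag as the main obstacle is immediate. Since $h^j(R)=0$ for $j\leq d-2$, you get $e_2(Q)=0$ and $\mathrm{g}_s(Q)=I(R)+e_1(Q)=0$. Put $M=\mathrm{g}_s(I)-(e_0(I)-e_1(Q))e_1(Q)$, which is $\geq 0$ by Lemma~\ref{2-prime}. The target argument is then $M+e_0(I)-e_1(Q)\geq M+1$.

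\textbf{The missing idea: the induction you dismissed.} Choose $x\in Q$ superficial for both $Q$ and $I$. Then $R/(x)$ is again Buchsbaum. For $d\geq 3$, the quantities $e_0(I)$, $e_1(Q)$ and $\mathrm{g}_s(I)$ are unchanged (Proposition~\ref{hibertcoeffsup}, Lemma~\ref{lemmaa1}).

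What matters is not $e_2$ itself but the difference $e_2(I)-e_2(Q)$, and that difference is unchanged. For $d\geq 4$ both terms are unchanged. For $d=3$ both acquire the same correction $\lambda(0:_R x)$ by Proposition~\ref{hibertcoeffsup}(iii). This reduces everything to $d=2$, where the only obstruction to $\depth R\geq d-1$ is $H^0_{\m}(R)$.

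In dimension $2$, set $\overline{R}=R/H^0_{\m}(R)$. Then $e_2(I)=e_2(\overline{I})+\lambda(H^0_{\m}(R))$, and $e_2(Q)=\lambda(H^0_{\m}(R))$ because $R$ is Buchsbaum. Hence $e_2(I)-e_2(Q)=e_2(\overline{I})$. Now apply Theorem~\ref{finalupperbounde2} to $\overline{R}$, which has positive depth. Then use
$\mathrm{g}_s(\overline{I})-\mathrm{g}_s(\overline{Q})\leq \mathrm{g}_s(I)-1+e_0(I)-e_1(Q)$,
which follows from $\lambda(R/(I+H^0_{\m}(R)))\leq\lambda(R/I)$ and $\lambda(R/(Q+H^0_{\m}(R)))\geq 1$. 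This is precisely where the extra $e_0(I)-e_1(Q)$ in the target argument comes from. This is the paper's proof, which goes through Proposition~\ref{lw}.
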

 
We also give an  infinite class of examples (see Example \ref{example depth 0}) which shows that the  this upper bound is sharp. 

In Section \ref{section5},  we establish another upper bound for $e_{2}(I)$ in a Buchsbaum local ring using the technique of $S_{2}$-fication.
Recall that a Noetherian local ring $(R, \m)$ is called a Buchsbaum ring if every system of parameters $x_{1},\ldots,x_{r}$ of $R$ is a weak sequence, i.e., 
     $(x_{1},\ldots,x_{i-1}):x_{i}=(x_{1},\ldots,x_{i-1}):\m \text{ for } i=1,\ldots, r.$
     Let $(R,\m)$ be a two-dimensional Buchsbaum local ring with positive depth,  and let $\mathtt{S}$ be  the $S_{2}$-fication of $R.$ For an $\m$-primary ideal $I$, assuming $I=I\mathtt{S},$ we derive an upper bound for $e_{2}(I)$ (see Proposition \ref{dc}). As a direct consequence,  we obtain a sharp upper bound for the second Hilbert coefficient of maximal ideal. More precisely, we prove the following:
\begin{theorem} (Theorem \ref{s2ficationmaximal})
  Let $(R,\m)$ be a Buchsbaum local ring of dimension $d \geq 2$ with $\depth R\geq d-1$. Let $Q$ be a minimal reduction of $\m$, then   $e_{2}(\m) \leq \dbinom{\mathrm{g}_{s}(\m)-e_{1}(Q)+1}{2}+e_{1}(Q).$
\end{theorem}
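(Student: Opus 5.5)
The plan is to reduce to dimension two, apply the Cohen–Macaulay-type bound on the $S_2$-fication there, and then translate back to $R$. The statement to prove is: for $R$ Buchsbaum with $\depth R\ge d-1$ and $Q$ a minimal reduction of $\m$,
\[
e_2(\m)\le \binom{\mathrm{g}_s(\m)-e_1(Q)+1}{2}+e_1(Q).
\]

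\textbf{Step 1: reduce to $d=2$.} Choose a superficial sequence $x_1,\dots,x_{d-2}$ for $\m$ that is part of a minimal generating set of $Q$. Since $\depth R\ge d-1$, each $x_i$ is a nonzerodivisor along the way. Set $\bar R=R/(x_1,\ldots,x_{d-2})$. Then:
\begin{itemize}
\item $\bar R$ is again Buchsbaum of dimension $2$ with positive depth.
\item $e_i(\m)=e_i(\m\bar R)$ and $e_i(Q)=e_i(Q\bar R)$ for $i\le 2$.
\item $\lambda(R/\m)=\lambda(\bar R/\bar\m)=1$.
\end{itemize}
Hence $\mathrm{g}_s(\m)=\mathrm{g}_s(\bar\m)$, and it suffices to treat $d=2$. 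Also $\lambda(H^1_\m(R))=-e_1(Q)$, since in a Buchsbaum ring of dimension $2$ with depth $1$ one has $e_1(Q)=-\lambda(H^1_\m(R))$ for every parameter ideal $Q$.

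\textbf{Step 2: pass to the $S_2$-fication in $d=2$.} Let $\mathtt S$ be the $S_2$-fication, so $\mathtt S/R\cong H^1_\m(R)$. Because $R$ is Buchsbaum, $\m H^1_\m(R)=0$, so $\m\mathtt S\subseteq R$; in fact $\m\mathtt S=\m$, since $\m\mathtt S\subseteq \m$ up to checking that it is not $R$. Thus the hypothesis $I=I\mathtt S$ of Proposition \ref{dc} holds for $I=\m$, and the theorem is the specialization $I=\m$ of that proposition. Its proof goes as follows.
\begin{itemize}
\item $\mathtt S$ is a Cohen–Macaulay semilocal ring of dimension $2$, finite over $R$.
\item $\m^n\mathtt S=\m^n$ for $n\ge1$, so $H_{\m}(n)=\lambda_R(R/\m^{n})$ differs from $\lambda_R(\mathtt S/\m^n\mathtt S)$ by the constant $\lambda(\mathtt S/R)=\lambda(H^1_\m(R))$.
\item Comparing Hilbert polynomials gives $e_0(\m)=e_0(\m\mathtt S)$, $e_1(\m)=e_1(\m\mathtt S)$, and $e_2(\m)=e_2(\m\mathtt S)+\lambda(H^1_\m(R))$, with signs to be checked: $\lambda(R/\m^{n+1})=\lambda(\mathtt S/\m^{n+1}\mathtt S)-\lambda(\mathtt S/R)$ and $e_2$ enters with sign $(-1)^2$.
\item Likewise $\lambda_R(\mathtt S/\m\mathtt S)=1+\lambda(\mathtt S/R)$, so
\[
\mathrm{g}_s(\m\mathtt S)=1+\lambda(H^1_\m(R))-e_0+e_1=\mathrm{g}_s(\m)-e_1(Q).
\]
\end{itemize}

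\textbf{Step 3: apply Ozeki's bound on $\mathtt S$.} Use $e_2\le\binom{\mathrm{g}_s+1}{2}$ from \cite[Proposition 3.2]{ozeki}. It is stated for Cohen–Macaulay local rings, but its proof (Rossi–Valla type, via $\lambda(I^{n+1}/QI^n)$ and Huckaba's formula) only uses Cohen–Macaulayness of the module $\mathtt S$ and lengths over $R$, so it runs for $\mathtt S$ as a maximal Cohen–Macaulay $R$-module with the ideal $\m$. This gives
\[
e_2(\m)=e_2(\m\mathtt S)+\lambda(H^1_\m(R))\le\binom{\mathrm{g}_s(\m)-e_1(Q)+1}{2}+\lambda(H^1_\m(R)),
\]
and the last term is $-e_1(Q)$.

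The stated bound has $+e_1(Q)$ rather than $-e_1(Q)$, so I must recheck the sign convention in the $e_2$ comparison. The key identity to pin down is $H_{\mathtt S}(n)-H_R(n)=\lambda(\mathtt S/R)$, where $H_{\mathtt S}(n)=\lambda(\mathtt S/\m^n\mathtt S)$ and $H_R(n)=\lambda(R/\m^n)$, which forces $e_2(\m)=e_2(\m\mathtt S)-\lambda(\mathtt S/R)=e_2(\m\mathtt S)+e_1(Q)$. Indeed the difference sits in the constant term, which is $+e_2$, so $e_2(R)=e_2(\mathtt S)-\lambda$. Good: the bound is then $\binom{\mathrm{g}_s(\m)-e_1(Q)+1}{2}+e_1(Q)$, as stated.

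The main obstacle is justifying Ozeki's inequality for the module $\mathtt S$, equivalently for the semilocal ring $\mathtt S$ with ideal $\m\mathtt S$. I would first try to rerun Ozeki's argument verbatim for the Cohen–Macaulay module. Failing that, I would localize at the maximal ideals of $\mathtt S$ and add up, although $\mathrm{g}_s$ is not additive in the right way. A secondary point is ensuring that the superficial reduction in Step 1 preserves the Buchsbaum property and $\lambda(H^1)$.
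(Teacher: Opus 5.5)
Your proposal is correct, and its skeleton matches the paper's. You cut down to $d=2$ by regular superficial elements and note $\m\mathtt S=\m$; the check you left open is just Nakayama, since $\m\mathtt S=R$ would force $\m\mathtt S=\mathtt S$. The $d=2$ case is then Proposition~\ref{dc} with $I=\m$.

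Where you differ from the paper is in how you prove that case. You treat $\mathtt S$ as an $R$-module. From $\lambda_R(\mathtt S/\m^n)=\lambda(R/\m^n)+\lambda(\mathtt S/R)$ and $\lambda(\mathtt S/R)=\lambda(H^1_\m(R))=-e_1(Q)$ you get two things:
\begin{itemize}
\item $e_2(\m)=e_2(\m,\mathtt S)+e_1(Q)$. This is the sign you finally settle on, so discard the first display of Step 3.
\item $\mathrm{g}_s(\m,\mathtt S)=\mathrm{g}_s(\m)-e_1(Q)$.
\end{itemize}
You then need Ozeki's bound for the module $\mathtt S$. That module version does hold, with a short proof:
\begin{itemize}
\item $\mathtt S$ is a maximal Cohen--Macaulay $R$-module, since $H^0_\m(\mathtt S)=H^1_\m(\mathtt S)=0$.
\item Take a superficial sequence $x,y$ of $\m$ with respect to $\mathtt S$ (e.g.\ general generators of $Q$) and set $N=\mathtt S/x\mathtt S$. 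Then $e_2(\m,\mathtt S)\le e_2(\m,N)=\sum_{n\ge1}n v_n$, where $v_n=\lambda(\m^{n+1}N/y\m^nN)$.
\item $\sum_{n\ge1}v_n=e_1(\m,N)-e_0(\m,N)+\lambda(N/\m N)=\mathrm{g}_s(\m,\mathtt S)$, and the $v_n$ satisfy the hypotheses of Lemma~\ref{ozekinumericallemma}, which finishes the argument.
\end{itemize}

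The paper instead localizes at the maximal ideals $\M_i$ of $\mathtt S$ and applies Corollary~\ref{cml} to each Cohen--Macaulay local ring $\mathtt S_i$. It recombines via $\sum_i f_i\,\mathrm{g}_s(\m\mathtt S_i)=\mathrm{g}_s(\m)-e_1(Q)$, using $\mathrm{g}_s(\m\mathtt S_i)\ge 0$ and $f_i\ge 1$ to get $\sum_i f_i\binom{\mathrm{g}_s(\m\mathtt S_i)+1}{2}\le\binom{\mathrm{g}_s(\m)-e_1(Q)+1}{2}$. So your fallback is exactly the paper's proof, and your worry there is unfounded: $\mathrm{g}_s$ \emph{is} additive once weighted by the relative degrees $f_i$.

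Your route avoids this semilocal bookkeeping and the convexity estimate, at the price of using Ozeki's theorem for modules rather than as stated. The paper's route needs only the ring statement, and it yields the finer intermediate bound $e_2(\m)\le\sum_i f_i\binom{\mathrm{g}_s(\m\mathtt S_i)+1}{2}+e_1(Q)$.
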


In Section \ref{section6}, we focus on the vanishing of  higher Hilbert coefficients  when $R$ is Cohen-Macaulay.  This has an important  consequence on the depth of $G(I)$. 
  
  Most of the relevant references can be found in the book by Rossi and Valla \cite{rv}.  
The vanishing of the Hilbert coefficients does imply good   depth  property for the associated graded ring of $I$ or some of  its powers. 
One of the first results in this direction was by  Narita \cite[Theorem 1]{narita}, who proved that for $d=2$, if $e_{2}(I)=0,$ then $G(I^{n})$ is Cohen-Macaulay for sufficiently large $n.$ 
Our goal is to characterize a sufficient conditions for  $\depth G(I)$ in terms of the Hilbert coefficients. 
We prove the following:
\begin{proposition} \label{prop1.2} (Proposition \ref{e2equalszeroCM})
   Let $(R, \m)$ be a Cohen-Macaulay local ring of dimension $d \geq 2,$ $I$ an $\m$-primary ideal and $\mathcal{I}=\{I_{n}\}$ be an $I$-admissible filtration. Suppose $e_{2}(\mathcal{I})=0,$ then $\depth G(\mathcal{I}) \neq d-1.$
\end{proposition}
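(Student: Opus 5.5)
We argue by contradiction: assume $e_2(\mathcal{I})=0$ and $\depth G(\mathcal{I})=d-1$. First reduce to $d=2$. Since $R$ is Cohen-Macaulay with infinite residue field, choose $x_1,\ldots,x_{d-2}\in I_1$ forming a superficial sequence for $\mathcal{I}$ whose initial forms are $G(\mathcal{I})$-regular; this is possible because $\depth G(\mathcal{I})=d-1\geq d-2$. Passing to $\bar R=R/(x_1,\ldots,x_{d-2})$ and $\bar{\mathcal{I}}=\{I_n\bar R\}$ preserves $e_0,e_1,e_2$. By Sally's machine, $G(\bar{\mathcal{I}})\cong G(\mathcal{I})/(x_1^*,\ldots,x_{d-2}^*)$, so $\depth G(\bar{\mathcal{I}})=1$. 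Thus it suffices to treat $d=2$: a two-dimensional Cohen-Macaulay ring with $\depth G(\mathcal{I})=1$ (so $G(\mathcal{I})$ is not Cohen-Macaulay) and $e_2(\mathcal{I})=0$.

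In dimension two, take a minimal reduction $J=(x,y)$ of $\mathcal{I}$ with $x^*$ regular on $G(\mathcal{I})$. Because $\depth G(\mathcal{I})\geq d-1=1$, Huckaba's formula (in the filtration form, cf. Corollary \ref{e_i}) gives
\[
e_2(\mathcal{I})=\sum_{n\geq 1} n\,\lambda\big(I_{n+1}/JI_n\big)
\]
together with $e_1(\mathcal{I})=\sum_{n\ge0}\lambda(I_{n+1}/JI_n)$. An equivalent route is Marley/Huckaba–Marley: $e_2=\sum_n n\,v_n$, where $v_n=\lambda(I_{n+1}/JI_n)$, and this holds precisely when $\depth G\geq d-1$. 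Each term is nonnegative, so $e_2(\mathcal{I})=0$ forces $I_{n+1}=JI_n$ for all $n\geq1$. Hence the reduction number of $\mathcal{I}$ with respect to $J$ is at most $1$.

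Next we show that $G(\mathcal{I})$ is Cohen-Macaulay. Since $x^*$ is regular, it suffices to show that $G(\mathcal{I}/x)=G(\bar{\mathcal{I}})$, for the one-dimensional CM ring $R/(x)$, has positive depth. Equivalently, by the Valabrega–Valla criterion, we need $J\cap I_{n+1}=JI_n$ for all $n$. For $n\geq1$ this is immediate, since $JI_n=I_{n+1}\subseteq J$. For $n=0$ we need $J\cap I_1=J$, which holds trivially because $J\subseteq I_1$. So the Valabrega–Valla conditions hold and $G(\mathcal{I})$ is Cohen-Macaulay. Its depth is then $2=d$, contradicting $\depth=d-1$.

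The main obstacle is that Huckaba's equality $e_2=\sum n v_n$ must hold under exactly the hypothesis $\depth G\geq d-1$, for admissible filtrations rather than only for $I$-adic ones. I would cite the Huckaba–Marley filtration version, which gives $e_2=\sum_n n v_n$ iff $\depth G\ge d-1$. I would also verify that the superficial reduction step keeps the filtration admissible and preserves $e_2$; this is standard in the CM setting.
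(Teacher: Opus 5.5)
Your proof is correct and is essentially the paper's argument: assuming $\depth G(\mathcal{I})=d-1$, the Huckaba--Marley formula $e_2(\mathcal{I})=\sum_{n\geq 1} n\,\lambda(I_{n+1}/JI_n)$ (Rossi--Valla, Theorem 2.5) forces $I_{n+1}=JI_n$ for all $n\geq 1$, and reduction number at most one makes $G(\mathcal{I})$ Cohen--Macaulay, a contradiction. The differences are cosmetic. Your reduction to $d=2$ is unnecessary, since the formula already holds in dimension $d$ once $\depth G(\mathcal{I})\geq d-1$. You also get Cohen--Macaulayness directly from Valabrega--Valla ($J\cap I_{n+1}=JI_n$), whereas the paper goes through $e_0(\mathcal{I})-e_1(\mathcal{I})=\lambda(R/I_1)$ and Rossi--Valla's Theorem 2.9(3).
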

As a consequence of Proposition  \ref{prop1.2}, we recover Narita's result  \cite[Theorem 1]{narita}, when $\mathcal{I}=\{I^{n}\}_{n \geq 0}$ (see Corollary \ref{core2equalszeroCM} (ii)). Note that Narita's  result does not extend to higher dimensions (see \cite[Example 6.3]{tjp2}), therefore, we obtain an analogue  of Narita's result \cite[Theorem 1]{narita} in higher dimension. In fact, we prove: 
\begin{proposition} \label{prop1.5}  (Proposition \ref{vanishinghilbcoeffCM})
       Let $(R, \m)$ be a Cohen-Macaulay local ring of dimension $d \geq 2,$ and $I$ an $\m$-primary ideal. If $e_{2}(I)=e_{3}(I)=\ldots=e_{d}(I)=0,$ then  the following hold:
       \begin{enumerate}[\normalfont(i)]
           \item  $\depth G(I) = 0$ or $d.$
           \item  $G(I^{n})$ is Cohen-Macaulay for sufficiently large $n.$
       \end{enumerate}
       \end{proposition}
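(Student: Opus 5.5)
The plan is to reduce both statements to the behaviour of the Hilbert coefficients of $G(I)$ under passage to a general superficial sequence, and then use Proposition \ref{e2equalszeroCM} (applied to the $I$-adic filtration), which says that $e_2(I)=0$ forces $\depth G(I)\neq d-1$.

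For (i), assume $0<\depth G(I)=s<d$; we aim for a contradiction. Choose a superficial sequence $x_1,\ldots,x_{s}$ for $I$ which is $G(I)$-regular in the sense that the initial forms form a regular sequence. Put $J=(x_1,\ldots,x_{s-1})$, $\bar R=R/J$ and $\bar I=I\bar R$. Then $\bar R$ is Cohen--Macaulay of dimension $d-s+1\geq 2$, and $G(\bar I)\cong G(I)/(x_1^*,\ldots,x_{s-1}^*)$, so $\depth G(\bar I)=1$. Since the initial forms are regular, the Hilbert series is preserved up to the factor $(1-z)^{s-1}$, and hence $e_i(\bar I)=e_i(I)$ for $0\leq i\leq d-s+1$. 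In particular $e_2(\bar I)=0$ and $e_j(\bar I)=0$ for $2\leq j\leq d-s+1$. If $d-s+1=2$, then $\depth G(\bar I)=1=\dim\bar R-1$, which contradicts Proposition \ref{e2equalszeroCM}. Otherwise $\dim\bar R\geq 3$ with $\depth G(\bar I)=1$, and we must descend further without losing the depth information.

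For that descent we go modulo one more superficial element $y$ that is not $G$-regular. I would try to use the Ratliff--Rush filtration $\widetilde{\mathcal I}=\{\widetilde{\bar I^n}\}$. It has the same Hilbert coefficients as the $\bar I$-adic filtration and $\depth G(\widetilde{\mathcal I})\geq 1$. Moreover, Proposition \ref{e2equalszeroCM} is stated for admissible filtrations, so it applies here, and this is why it was stated in that generality. The key step is an induction on $\dim$, using the Huckaba-type formulas (Corollary \ref{e_i}) in the form
\[
e_j=\sum_{n\ge j-1}\binom{n}{j-1}\lambda\big(\widetilde{I^{n+1}}/Q\widetilde{I^{n}}\big)\quad\text{when the relevant depth is at least } \dim-1,
\]
together with the general inequalities. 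These give that $e_2=\cdots=e_{\dim}=0$ forces all the terms $\lambda(\widetilde{I_{n+1}}/\cdots)$ with $n\geq1$ to vanish. That is, the Ratliff--Rush filtration has reduction number $\leq 1$, so $G(\widetilde{\mathcal I})$ is Cohen--Macaulay and $\widetilde{I^n}$ has minimal multiplicity type behaviour, with $e_1=e_0-\lambda(R/\widetilde I)$. Comparing this with $G(I)$: since $\depth G(I)>0$ we have $\widetilde{I^n}=I^n$ for all $n$. Hence $G(I)=G(\widetilde{\mathcal I})$ is Cohen--Macaulay, contradicting $s<d$. I expect this descent to be the main obstacle. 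The clean route is to show that vanishing of $e_2,\ldots,e_d$ passes to the one-dimension-lower Ratliff--Rush filtration modulo a superficial element, and then to reach dimension $2$, where Proposition \ref{e2equalszeroCM} together with Narita-type arguments ($e_2=0$ implies $\widetilde{I^{n+1}}=Q\widetilde{I^n}$ for $n\geq1$) finishes.

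For (ii), note that $e_i(I^n)$ is a polynomial in $n$ built from the $e_j(I)$. Explicitly, $e_i(I^n)=\sum_{j\le i}(-1)^{i-j}\binom{d-j}{i-j}\ldots$ gives that all $e_i(I^n)$ with $i\geq2$ vanish when $e_2(I)=\cdots=e_d(I)=0$ (the $j=0,1$ contributions cancel for $i\geq2$, which I would check). For $n\gg0$ we also have $I^n=\widetilde{I^n}$, so $\depth G(I^n)\geq1$ (standard). Applying (i) to $I^n$ then gives $\depth G(I^n)=d$, i.e. $G(I^n)$ is Cohen--Macaulay.
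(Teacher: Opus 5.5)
\paragraph{Part (i).} The decisive step is asserted, not proved. Your reduction modulo $x_1,\dots,x_{s-1}$ is correct, but it only returns you to the same situation ($\dim\ge 3$, $\depth G=1$, $e_2=\cdots=e_{\dim}=0$). What is then needed is exactly that $\widetilde{G}(I)$ is Cohen--Macaulay, i.e.\ $\widetilde{I^{n+1}}=J\widetilde{I^n}$ for $n\ge 1$.

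The formula $e_j=\sum_n\binom{n}{j-1}\lambda(\widetilde{I^{n+1}}/J\widetilde{I^n})$ requires $\depth\widetilde{G}(I)\ge d-1$, whereas you only know $\depth\widetilde{G}(I)\ge 1$. The general inequalities you allude to (Huckaba--Marley for $e_1$, or $e_2\le\sum_n n\lambda(I^{n+1}/JI^n)$) are upper bounds, so $e_2=0$ does not force those lengths to vanish. The paper takes this step from Puthenpurakal \cite[Theorem 6.2]{tjp2}; after that, its argument is the one you end with.

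If you want to prove the step yourself, the induction runs opposite to what you describe.
\begin{itemize}
\item Vanishing of $e_2,\dots,e_{d-1}$ passes to $I'=IR'$ automatically. So by induction (the base case $d=2$ is Proposition \ref{e2equalszeroCM} for the Ratliff--Rush filtration), $\widetilde{G}(I')$ is Cohen--Macaulay with $\widetilde{I'^{n+1}}=J'\widetilde{I'^{n}}$ for $n\ge 1$.
\item Since $x^*$ is regular on $\widetilde{G}(I)$, comparing the $h$-polynomials of the filtrations $\{\widetilde{I^n}R'\}$ and $\{\widetilde{I'^n}\}$ yields $(-1)^d e_d(I)=\sum_n\lambda(\widetilde{I'^n}/\widetilde{I^n}R')$.
\item It is precisely $e_d(I)=0$ that forces $\widetilde{I^n}R'=\widetilde{I'^n}$ for all $n$. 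Then $\widetilde{G}(I)$ is Cohen--Macaulay by Sally descent.
\end{itemize}
Your sketch has no step where $e_d(I)=0$ does this work.

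\paragraph{Part (ii).} This part rests on a false claim. For $d\ge 3$, the coefficients $e_i(I^k)$ with $i\ge 2$ need not vanish when $e_2(I)=\cdots=e_d(I)=0$. Take $R=\kk[[x,y,z]]$ and $I=\m$. Then $e_2(\m)=e_3(\m)=0$, but $\lambda(R/\m^{3n})=\binom{3n+2}{3}$, and evaluating this polynomial at $n=0$ and $n=-1$ gives $e_3(\m^3)=0$ and $e_2(\m^3)=1$. In general, for $d=3$, $e_2(I^k)=e_0(I)\binom{k}{3}+e_1(I)\binom{k}{2}$. Only for $d=2$ does $e_2(I^k)=e_2(I)$ hold, which is what the paper uses in Corollary \ref{core2equalszeroCM}. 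So you cannot apply (i) to $I^n$.

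What does pass to powers is the structural fact behind (i). Once $\widetilde{G}(I)$ is Cohen--Macaulay, Lemma \ref{lem1} makes $\widetilde{G}(I^k)$ Cohen--Macaulay for every $k$. For $k\gg0$ one has $\widetilde{I^{km}}=I^{km}$ for all $m\ge 1$, so $G(I^k)=\widetilde{G}(I^k)$. This is the paper's proof of (ii).
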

 We conclude this section by proving a result on the sign of  $e_{d}(I)$ in a $d$-dimensional Cohen-Macaulay local ring, assuming  that $e_{i}(I)=0$ for all $2 \leq i \leq d-1$. In particular we prove the following:
  \begin{proposition} (Proposition \ref{edresult}) \label{prop1.6}
     Let $(R, \m)$ be a Cohen-Macaulay local ring of dimension $d \geq 2,$ and $I$ an $\m$-primary ideal. If $e_{2}(I)=e_{3}(I)=\ldots=e_{d-1}(I)=0,$ then $(-1)^{d}e_{d}(I) \geq 0.$
\end{proposition}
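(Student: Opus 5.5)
Proposition~\ref{edresult} says: if $R$ is Cohen-Macaulay of dimension $d\ge 2$ and $e_2(I)=\cdots=e_{d-1}(I)=0$, then $(-1)^d e_d(I)\ge 0$. I will prove it by induction on $d$, cutting down by a superficial element. I use the same mechanism as in Proposition~\ref{vanishinghilbcoeffCM}, but I only need to control the sign of the last coefficient.

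\textbf{Base case $d=2$.} The hypothesis is empty, and the claim is $e_2(I)\ge 0$. This is Narita's theorem \cite{narita}.

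\textbf{Inductive step.} Let $d\ge 3$ and choose $x\in I$ superficial for $I$. Since $R$ is Cohen-Macaulay, $x$ is a nonzerodivisor. Put $\bar R=R/(x)$ and $J=I\bar R$. By the standard behaviour of Hilbert coefficients modulo a superficial regular element, $e_i(J)=e_i(I)$ for $0\le i\le d-2$. For the top coefficient, Singh's formula gives
\[
e_{d-1}(J)=e_{d-1}(I)+(-1)^{d-1}\sum_{n\ge 0}\lambda\bigl((I^{n+1}:x)/I^n\bigr).
\]
I will verify the sign convention against the $d=1$ case. With $e_{d-1}(I)=0$, this gives $(-1)^{d-1}e_{d-1}(J)\ge 0$.

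For $d\ge 4$, $J$ satisfies $e_2(J)=\cdots=e_{d-2}(J)=0$ in the $(d-1)$-dimensional Cohen-Macaulay ring $\bar R$. This only confirms the sign of $e_{d-1}(J)$ again, which we already have; it gives no information about $e_d(I)$. So a naive induction does not reach $e_d$, and a different relation is needed.

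\textbf{Main step: a direct formula for $e_d(I)$.} Going down one dimension loses $e_d$, so instead I will use the Huckaba-type expression for the Hilbert coefficients (Corollary~\ref{e_i}, or its Cohen-Macaulay version \cite{sam}). Let $Q$ be a minimal reduction, $s\ge 0$ and $w_n$ the corresponding correction terms. Then
\[
e_i(I)=\sum_{n\ge i-1}\binom{n}{i-1}\lambda(I^{n+1}/QI^n)+\text{(corrections from the Koszul homology of } G(I)),
\]
and the correction vanishes when $\depth G(I)\ge d-1$.

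\emph{Case $\depth G(I)\ge d-1$.} All $e_i(I)\ge 0$, and $e_i(I)=\sum_n \binom{n}{i-1}v_n$ with $v_n=\lambda(I^{n+1}/QI^n)\ge 0$. Then $e_2(I)=0$ forces $v_n=0$ for all $n\ge 1$. Hence every $e_i(I)=0$ for $i\ge 2$, and the claim holds.

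\emph{Case $\depth G(I)<d-1$.} I will pass to the Ratliff--Rush filtration, or equivalently to $I^m$ for $m\gg 0$; by Proposition~\ref{vanishinghilbcoeffCM}-style arguments, large powers have $\depth G\ge 1$. The plan is to use
\[
P_I(n)-H_I(n)=\sum_i (-1)^i\lambda\bigl(H^i_{\mathcal R_+}(G(I))_n\bigr)\quad\text{(Serre/Blancafort)},
\]
evaluated at $n=0$. This gives
\[
(-1)^d e_d(I)=\lambda(R/I)-\sum_{i}(-1)^i\lambda\bigl(H^i(G)_0\bigr)-\bigl(\text{terms in } e_0,\dots,e_{d-1}\bigr),
\]
which under the vanishing hypothesis becomes $e_0-e_1+\cdots$.

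This is the step I expect to be the main obstacle: controlling the sign of the local cohomology contributions in degree $0$. My first attempt will be to reduce to $\bar R$ using Proposition~\ref{e2equalszeroCM}, so that $\depth G$ is either large or $0$. In the depth-$0$ case I will use the Ratliff--Rush closure, which leaves all $e_i$ unchanged, has $\depth\ge 1$, and behaves well under superficial sequences. I will then iterate the superficial reduction $d-2$ times on the Ratliff--Rush filtration and apply Narita's nonnegativity in dimension $2$ to the filtration version.
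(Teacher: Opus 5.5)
Your proposal has a genuine gap. The case $\depth G(I)<d-1$ is the whole content of the statement, and it is never argued: you leave the sign control of the local cohomology terms as ``the main obstacle''. The fallback route you sketch cannot close it, for two reasons.

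\begin{itemize}
\item The Ratliff--Rush filtration does not pass to quotients. $\depth \widetilde{G}(I)\ge 1$ gives you one superficial $x$ with $x^*$ regular on $\widetilde{G}(I)$. But the induced filtration $\{(\widetilde{I^n}+(x))/(x)\}$ on $R/(x)$ is in general not the Ratliff--Rush filtration of $IR/(x)$, and its associated graded ring may have depth $0$. So you cannot iterate $d-2$ times.
\item Even if you could, Narita's theorem in dimension $2$ controls only the second coefficient, which is $e_2(I)=0$ already. By then $e_d(I)$ would be a coefficient beyond the dimension, which is exactly the problem you flagged for the naive induction.
\end{itemize}

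Replacing $I$ by $I^m$ is also not equivalent. For $d=3$ and $e_2(I)=0$ one gets $e_2(I^m)=e_0(I)\binom{m}{3}+e_1(I)\binom{m}{2}$, which is positive for $m\ge 3$, so the hypothesis is destroyed.

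The missing idea is hidden by an index slip in your ``Singh's formula''. For a superficial nonzerodivisor $x$, Proposition~\ref{hibertcoeffsup} gives $e_{d-1}(I')=e_{d-1}(I)$ exactly. Hence $I'=IR'$ has $e_2(I')=\cdots=e_{d-1}(I')=0$, i.e. all coefficients from $2$ up to $\dim R'$ vanish. The correction you wrote belongs one step higher: $e_d(I')=e_d(I)+(-1)^d\lambda(\mathcal{B}(x,R))$, where $e_d(I')=h^{(d)}_{I'}(1)/d!$.

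So $e_d$ is not lost when passing to $R'$. But the correction enters with the unfavourable sign, $(-1)^de_d(I)=(-1)^de_d(I')-\lambda(\mathcal{B}(x,R))$. No sign argument alone can finish; you need an upper bound for $\lambda(\mathcal{B}(x,R))$. The paper obtains it in three steps:

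\begin{enumerate}
\item Apply Proposition~\ref{vanishinghilbcoeffCM} to $I'$. Either $\depth G(I')=d-1$, in which case $G(I)$ is Cohen--Macaulay by Sally's descent and $e_d(I)=0$. Or $\depth G(I')=0$.
\item In the latter case, Puthenpurakal's theorem makes $\widetilde{G}(I')$ Cohen--Macaulay with $\widetilde{I'^{n+1}}=L\widetilde{I'^{n}}$ for $n\ge 1$. Hence $\widetilde{e}_d(I')=0$ and $(-1)^de_d(I')=\sum_{n\ge 0}\lambda(\widetilde{I'^{n+1}}/I'^{n+1})=\lambda(H^0_{\M}(L^{I'}(R')))$.
\item The local cohomology sequence of the second fundamental exact sequence (\ref{eq:u}) gives $\lambda(\mathcal{B}(x,R))\le\lambda(H^0_{\M}(L^{I'}(R')))$ (Lemma~\ref{comparing}).
\end{enumerate}

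None of these ingredients appears in your proposal.
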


Recall that an m-primary ideal $I$ is called a \textit{generalized Narita ideal} if $e_{i}(I)=0$
 for every $2\leq i\leq d$. We remark that Part (2) of Proposition \ref{prop1.5} and Proposition \ref{prop1.6} are due to Puthenpurakal (see  \cite[Theorem 1.1 (3) and Theorem 1.4]{tjp3}, respectively). Since our proofs are based on different techniques, we include here for the sake of completeness.
 
We provide  explicit examples  which illustrate the  results obtained in  this section.

\section{Preliminaries}  \label{section2}
In this section  we recall  a few definitions needed in this paper. We also define some  notations and extend some well known results on the Hilbert coefficients to the case when  the ring is not Cohen-Macaulay.   For all undefined terms, we request the reader to refer  \cite{bh}.

  \begin{definition} \label{defhilbcoeff1}
       A sequence of ideals $\mathcal{I}=\{I_{n}\}_{n\in \mathbb Z}$ is called an $I$-\textit{admissible filtration} if for all $n, m \in \mathbb Z,$ 
       $(i)$ $I_{n+1} \subseteq I_{n},$ 
       $(ii)$ $I_{m}I_{n} \subseteq I_{m+n}$ and 
       $(iii)$  there exists $k \in \mathbb N$ such that $I^{n} \subseteq I_{n} \subseteq I^{n-k}$. The \textit{Hilbert-Samuel function} of $\mathcal{I}$ is defined as $H_{\mathcal{I}}(n)= \lambda(R/I_{n}),$ where $\lambda(-)$ denotes the length. For sufficiently large  $n,$ this function coincides with  a polynomial $P_{\mathcal{I}}(n) \in \mathbb Q[x]$ of degree $d,$  called the \textit{Hilbert-Samuel polynomial} of $\mathcal{I}$ and can be written  as:
\begin{equation*}
    P_{\mathcal{I}}(n)=e_{0}(\mathcal{I})\dbinom{n+d-1}{d}-e_{1}(\mathcal{I})\dbinom{n+d-2}{d-1}+\cdots+(-1)^{d}e_{d}(\mathcal{I}),
\end{equation*}
where  $e_{i}(\mathcal{I})$ for $i=0,\ldots,d$ are called  the  \textit{Hilbert coefficients} of  $\mathcal{I}$. The leading coefficient $e_{0}(\mathcal{I})$  is  the \textit{multiplicity} of $\mathcal{I}$.
 When $\mathcal{I}$ is the $I$-adic filtration $\{I^{n}\}_{n \geq 0}$, we write $H_{I}(n)$ and $P_{I}(n)$ for the corresponding function and polynomial.
  \end{definition}

  \begin{definition}
       The \textit{Hilbert series} $HS_{\mathcal{I}}(t)$ of an $I$-admissible filtration $\mathcal{I}=\{I_{n}\}_{n \in \mathbb Z}$  is the formal power series $\displaystyle \sum_{n \geq 0} \lambda(I_{n}/I_{n+1}) t^{n}$. 
    By the Hilbert-Serre theorem, we write 
    $ HS_{\mathcal{I}}(t)=\dfrac{h_{\mathcal{I}}(t)}{(1-t)^{d}}$,
where $h_{\mathcal{I}}(t) \in \mathbb{Z}[t]$ is the unique polynomial with $h_{\mathcal{I}}(1) \neq 0$, known as the \textit{h-polynomial} of $\mathcal{I}.$   The power series, $\displaystyle \sum_{n \geq 0} H_{\mathcal{I}}(n)t^{n}$ is called the \textit{Hilbert-Samuel series} of $\mathcal{I}.$ Note that $\displaystyle \sum_{n \geq 0} H_{\mathcal{I}}(n)t^{n}=\frac{h_{\mathcal{I}}(t)}{(1-t)^{d+1}},$
and from \cite[Page 9]{rv}, an easy computation shows that for all $i \geq 0,$ we have $e_{i}(\mathcal{I})=\dfrac{h^{(i)}_{\mathcal{I}}(1)}{i!},$
 where $h^{(i)}_{\mathcal{I}}(1)$ is the $i^{th}$-formal derivative of $h_{\mathcal{I}}(t)$ at $t=1.$
  \end{definition}

\begin{definition}
         An element $x\in I \backslash I^2$ is called a \textit{superficial element} of $I$ if there exists a positive integer $c$ such that $(I^{n+1}:x) \cap I^{c}=I^{n}$ for all $n \geq c.$  A sequence $x_{1},x_{2},\ldots,x_{j}$ is  a \textit{superficial sequence} of $I,$ if $x_{1}$ is a superficial element of $I$, and the image of  $x_{i}$  in $R/(x_{1},x_{2},\ldots,x_{i-1})$ is a superficial element of $I/(x_{1},x_{2},\ldots,x_{i-1})$ for all $i=2,\ldots,j.$  
\end{definition}

We recall some useful properties of superficial elements.

\begin{proposition}\cite[Proposition 1.2]{rv} \label{hibertcoeffsup}
Let $x$ be a superficial element of $I$ in a local ring $(R,\m)$. Then we have:
\begin{enumerate}[\normalfont(i)]
    \item \label{hibertcoeffsup-1}
    $\dim (R/(x))=d-1.$
    \item \label{hibertcoeffsup-2}
    $e_{i}(I/(x))=e_{i}(I)$ for every $i=0,\ldots,d-2.$
    \item \label{hibertcoeffsup-3}
    $e_{d-1}(I/(x))=e_{d-1}(I)+(-1)^{d-1}\lambda(0:x).$
    \item \label{hibertcoeffsup-4}
    There exists an integer $j$ such that for every $n \geq j-1,$ we have
    $$e_{d}(I/(x))=e_{d}(I)+(-1)^d\left[\displaystyle \sum_{i=0}^{n}\lambda\left(\frac{I^{i+1}:x}{I^{i}}\right)-(n+1)\lambda(0:x)\right].$$
    \item $x^*$ is a non-zero divisor in $G(I)$ if and only if $HS_{I}(t)=\dfrac{HS_{I/(x)}(t)}{(1-t)}$ if and only if $x$ is a non-zero divisor in $R$ and $e_{d}(I)=e_{d}(I/(x)).$
\end{enumerate}
\end{proposition}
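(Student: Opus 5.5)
The plan is to derive everything from one exact sequence comparing $R/I^n$ with $R/(I^{n+1},x)$, together with a stabilization statement for the colon ideals $I^{n+1}:x$.

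\textbf{The exact sequence.} Multiplication by $x$ gives, for every $n\ge 0$,
\[
0 \to \frac{I^{n+1}:x}{I^n} \to R/I^n \xrightarrow{\ \cdot x\ } R/I^{n+1} \to R/(I^{n+1},x) \to 0 .
\]
Counting lengths yields
\[
H_{I/(x)}(n+1)=H_I(n+1)-H_I(n)+\lambda\bigl((I^{n+1}:x)/I^n\bigr),
\]
where $H_{I/(x)}$ is the Hilbert–Samuel function of $I(R/(x))$. Everything will be read off from this identity.

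\textbf{Stabilization.} Next I would show two facts.
\begin{enumerate}[(a)]
\item $0:x$ has finite length.
\item For $n\gg0$, $I^{n+1}:x=I^n+(0:x)$ and $(0:x)\cap I^n=0$.
\end{enumerate}
For (b), first apply Artin–Rees to $0:x\subseteq R$: there is $k$ with $(0:x)\cap I^{n}=I^{n-k}\bigl((0:x)\cap I^k\bigr)$ for $n\ge k$. Superficiality then forces $(0:x)\cap I^{n}=0$ for $n$ large. Indeed, for $y\in (0:x)\cap I^{n}$ with $n\ge c$ we get $y\in (I^{m+1}:x)\cap I^c=I^m$ for every $m\ge c$, so $y\in\bigcap_m I^m=0$ by Krull. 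This gives (a), because the $\m$-primary powers $I^n$ meet $0:x$ trivially.

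For the equality $I^{n+1}:x=I^n+(0:x)$, I would again use Artin–Rees, now applied to the ideal $xR$: $xR\cap I^{n+1}=xI^{n+1-k'}\cap\cdots$. Combining this with $(I^{n+1}:x)\cap I^c=I^n$, one writes an element $z\in I^{n+1}:x$ as $z=w+u$ with $w\in I^{n+1-k'}$ and $xu=0$. Then $w\in (I^{n+1}:x)\cap I^c=I^n$ once $n+1-k'\ge c$.

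This step is the main technical obstacle. If the direct argument gets messy, I would fall back on passing to $R/(0:x)$, where $x$ becomes regular and superficial, and use the standard fact that a regular superficial element satisfies $I^{n+1}:x=I^n$ for $n\gg0$.

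\textbf{Items (i)–(iii).} Granting (b), for $n\gg0$ we have $\lambda\bigl((I^{n+1}:x)/I^n\bigr)=\lambda(0:x)$. Hence
\[
P_{I/(x)}(n+1)=P_I(n+1)-P_I(n)+\lambda(0:x).
\]
The right side has degree $d-1$, which gives (i) (using that $\dim R/(x)\ge d-1$ always holds). Comparing coefficients in the binomial bases, the difference $P_I(n+1)-P_I(n)$ has the coefficients $e_0,\dots,e_{d-1}$ of $I$ in degree $d-1$. This gives (ii), and the extra constant $\lambda(0:x)$ gives (iii).

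\textbf{Item (iv).} Sum the length identity over $i=0,\dots,n$. The sum telescopes to
\[
\sum_{i=0}^{n}H_{I/(x)}(i+1)=H_I(n+1)+\sum_{i=0}^{n}\lambda\bigl((I^{i+1}:x)/I^i\bigr).
\]
Now compare the constant terms after replacing both sides by their polynomials for large $n$. The left side becomes a polynomial of degree $d$ whose constant term involves $e_d(I/(x))$. Subtracting $(n+1)\lambda(0:x)$ makes the summand stabilize at $0$, so the expression in brackets is constant for $n\ge j-1$. This yields the formula in (iv).

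\textbf{Item (v).} Here $x^*$ is a nonzerodivisor on $G(I)$ iff $I^{n+1}:x=I^n$ for all $n$. By the exact sequence this holds iff $\sum_n H_{I/(x)}(n)t^n=(1-t)\sum_n H_I(n)t^n$, which, rewriting via $h$-polynomials, is equivalent to $HS_I(t)=HS_{I/(x)}(t)/(1-t)$. Finally, suppose $x$ is regular. Then $\lambda(0:x)=0$ and every term $\lambda\bigl((I^{i+1}:x)/I^i\bigr)$ is nonnegative. So by (iv), $e_d(I)=e_d(I/(x))$ iff all these terms vanish, i.e. iff $x^*$ is regular. Conversely, if $x^*$ is regular then $x$ is regular, since $(0:x)\subseteq\bigcap_n (I^{n+1}:x)=\bigcap_n I^n=0$.
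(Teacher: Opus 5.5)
The paper gives no proof of this statement; it simply imports it from Rossi--Valla [Proposition 1.2]. Your argument is correct and is essentially the standard proof given there: the four-term exact sequence for multiplication by $x$, the stabilization $I^{n+1}:x=I^n+(0:x)$ with $(0:x)\cap I^n=0$ for $n\gg0$, and a summation argument for (iv).

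The one point you should make explicit is the meaning of $e_d(I/(x))$ in (iv). Since $\dim R/(x)=d-1$, it means $h^{(d)}_{I/(x)}(1)/d!$, not a coefficient of the Hilbert--Samuel polynomial. It enters because
\[
\sum_{n\ge0}\Bigl(\sum_{i=0}^{n}H_{I/(x)}(i+1)\Bigr)t^n=\frac{h_{I/(x)}(t)}{(1-t)^{d+1}},
\]
so for large $n$ the summed function equals $\sum_{k=0}^{d}(-1)^k e_k(I/(x))\binom{n+d-k}{d-k}$. Comparing this with $H_I(n+1)+(n+1)\lambda(0:x)+\text{const}$ in the same binomial basis gives (iv), and re-derives (ii) and (iii) along the way.
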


We now recall an important result from the literature which provides formulas for all the Hilbert coefficients in a one-dimensional local ring.

\begin{lemma} \cite[Lemma 2.1, 2.2]{rv} \label{formulahilbcoeffindimone} Let $x$ be a superficial element of $I$ in a one dimensional local ring $(R,\m).$ Then for every $k \geq 1$ we have 
\begin{equation*}
    e_{k}(I)=\sum_{n \geq k-1}
    \binom{n}{k-1}\left[\lambda\left(\frac{I^{n+1}}{xI^{n}}\right)-\lambda(0:_{I^{n}}x)\right].
\end{equation*} 
\end{lemma}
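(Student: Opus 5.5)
We work directly with the Hilbert–Samuel function of $I$ in dimension one. Set $a_n=\lambda(I^{n+1}/xI^{n})$ and $b_n=\lambda(0:_{I^{n}}x)$. The superficiality of $x$ makes $b_n$ constant for $n\gg0$. The plan has three steps: produce a telescoping identity for $\lambda(R/I^{n+1})$, compare it with the Hilbert polynomial $e_0(I)(n+1)-e_1(I)$, and then pass to the $h$-polynomial to read off all the $e_k(I)$.

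\textbf{Step 1: the basic identity.} For each $n\ge0$ we have the exact sequence
\[0\to (0:_{I^{n}}x)\to I^{n}\xrightarrow{\,x\,} I^{n}\to I^{n}/xI^{n}\to 0,\]
although $I^n$ has infinite length, so it cannot be used directly. Instead we compute $\lambda(I^n/xI^n)$ through finite-length quotients:
\[\lambda(I^{n}/xI^{n})=\lambda(I^{n}/I^{n+1})+a_n .\]
To compare $xI^n$ with $I^{n+1}$, take $m$ large with $I^{m}\subseteq xI^{n}$ (possible since $x$ is a parameter). The multiplication map $I^{n}/I^{m}\to xI^{n}/xI^{m}$ has kernel $(xI^{m}:_{I^n}x)/I^m$. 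For $m\gg0$, the Artin–Rees lemma and superficiality give $xI^m:_{I^n}x=I^m+(0:_{I^n}x)$. Combining these yields
\[\lambda(I^{n}/I^{n+1})=e_0(I)-a_n+b_n\quad\text{for all }n\ge 0,\]
using $\lambda(R/xR)-\lambda(0:x)=e_0(I)$ in the limit. This is the standard fact behind \cite[Lemma 2.1]{rv}. The real content of this step, and the main obstacle, is the uniformity of the Artin–Rees/superficial argument in $n$; I would import it from \cite{rv} rather than reprove it.

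\textbf{Step 2: $k=1$.} Summing the identity of Step 1 gives
\[\lambda(R/I^{n+1})=(n+1)e_0(I)-\sum_{j=0}^{n}(a_j-b_j).\]
For $n\gg0$ we have $a_j=0$, and $b_j$ stabilises at $\lambda(0:_{I^{j}}x)\to \lambda(H^0_\m(R)\cap\ldots)$. The corrected terms $a_j-b_j$ are zero for $j\gg0$: this holds because $\lambda(I^j/I^{j+1})=e_0(I)$ eventually. Comparing with the Hilbert polynomial then gives $e_1(I)=\sum_{n\ge0}(a_n-b_n)$, which is the case $k=1$.

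\textbf{Step 3: general $k$.} The Hilbert series is
\[HS_I(t)=\frac{e_0(I)-(1-t)\sum_n (a_n-b_n)t^n}{1-t},\]
so the $h$-polynomial is
\[h(t)=e_0(I)-(1-t)\sum_{n}(a_n-b_n)t^{n}.\]
Using $e_k(I)=h^{(k)}(1)/k!$, write $h(t)=e_0(I)+(t-1)g(t)$ with $g(t)=\sum_n(a_n-b_n)t^n$. Then for $k\ge1$,
\[\frac{h^{(k)}(1)}{k!}=\frac{g^{(k-1)}(1)}{(k-1)!}=\sum_{n\ge k-1}\binom{n}{k-1}(a_n-b_n),\]
which is the stated formula. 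This step is routine once the $h$-polynomial is expressed through $g$.
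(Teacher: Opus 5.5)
Your outline is the standard argument behind \cite[Lemma 2.1, 2.2]{rv}, which the paper cites without giving a proof: first the identity $\lambda(I^n/I^{n+1})=e_0(I)-a_n+b_n$, then $h(t)=e_0(I)+(t-1)g(t)$ and the Leibniz evaluation $h^{(k)}(1)/k!=g^{(k-1)}(1)/(k-1)!$, and all of this is correct.

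One correction, which affects your remarks at the start and in Step 2 but not the logic of Step 2: $b_n=\lambda((0:x)\cap I^n)$ does not merely stabilise, it is $0$ for $n\gg0$, since superficiality gives $(0:x)\cap I^c\subseteq\bigcap_k I^k=0$; this fact, together with $I^{m+1}=xI^m$ for $m\gg0$, is exactly what makes your truncation in Step 1 give $\lambda(I^n/xI^n)-b_n=\lambda(I^m/xI^m)-b_m=e_0(I)$, whereas the colon identity $xI^m:_{I^n}x=I^m+(0:_{I^n}x)$ is trivial for $m\ge n$ and needs neither Artin--Rees nor superficiality.
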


\begin{definition} \label{ratliffrushdef}
    The  Ratliff-Rush closure of $I^n$ is the ideal 
$\widetilde{I^n}=\displaystyle \bigcup_{k \geq 1}(I^{n+k}:I^{k})$. 
 The \textit{Ratliff-Rush filtration} with respect to $I$ is the filtration
 $\mathcal{F}:=\{\widetilde{I^{n}}\}_{n \geq 0}.$ 
The associated graded ring of the Ratliff-Rush filtration is  $\widetilde{G}(I)=\displaystyle \bigoplus_{n \geq 0}\widetilde{I^{n}}/\widetilde{I^{n+1}}$. 
  If $\grade(I)>0$, then  $\widetilde{I^{n}}=I^{n}$ for sufficiently large $n$ and  hence,  
  $\widetilde{G}(I)$ is Noetherian. Moreover,  $\widetilde{e_{i}}(I) = e_i(I)$ for all $i=0, \ldots, d$.
  \end{definition}

\begin{definition} \label{reductiondef}
     An ideal $Q \subseteq I$ is a \textit{reduction} of $I$ if $QI^{n}=I^{n+1}$ for some $n \in \mathbb{N}$. We say that a reduction $Q$  
is a \textit{minimal reduction} if it is minimal with respect to inclusion among all reductions of $I$. The \textit{reduction number} of $I$ with respect to $Q$ is defined  as 
   $r_{Q}(I):=\min\{n: QI^{n}=I^{n+1}\}. $
\end{definition}

We state a few results on minimal reduction of ideals which will be used in our paper. 

\begin{remark} \label{remarksup} 
 From \cite[Theorem 14.14]{matsumura}, there exists a minimal reduction of $I$ generated by a system of parameters.  From   \cite[Lemma 1.2]{rtt} it follows that  every minimal reduction of $I$ can be generated by a superficial sequence of $I$.   Therefore, for an $\m$-primary ideal
$I$,  we can always choose   a superficial sequence $x_{1},\ldots,x_{d}$ which is also a system of parameters and such that  $Q = (x_{1},\ldots,x_{d})$ is a minimal reduction of $I$. Furthermore, if $\depth R = i,$  then we can choose a minimal reduction $Q=(x_1, \ldots, x_d)$ such that $x_{1},\ldots,x_{i}$ is a regular sequence in $R$ for $1\leq i \leq d$.
\end{remark}

 In \cite[Theorem 2.4 and Corollary 2.10]{sam}, Huckaba proved  results concerning the Hilbert coefficients of an ideal  in the case where $R$ is a Cohen-Macaulay local ring. We extend these results to the case $\depth R \geq d-1$
(see Proposition~\ref{hilbcoeffextension} and Corollary~\ref{e_i}),   as we need it in the subsequent sections.    We first fix a few notations.

\begin{notation} \normalfont
\begin{enumerate} [\normalfont(i)]
    \item  Let  $Q = (x_1, \ldots, x_d) $ be any minimal reduction of $I$ and let $n \geq 0$.
We define 
\begin{align*}
  l_{Q,I}(n) & :=\dfrac{I^{n+1}}{QI^{n}},&\\
  U_{I}(d;n) & := \frac{((x_{1}, \ldots, x_{d-1}):x_{d})\cap (I^{n}+(x_{1}, \ldots, x_{d-1}))}{(x_1, \ldots, x_{d-1})} \text { for all } d\geq 2.
\end{align*}

\item For an integer valued function $f:\mathbb Z \longrightarrow \mathbb Z$, the \textit{first difference} of $f$ is  $\Delta[f(n+1)]=f(n+1)-f(n)$ for all $n \in \mathbb Z.$ Inductively, we define the \textit{i-th difference function} of $f$ by $\Delta^{i}[f(n+1)]=\Delta^{i-1}[\Delta[f(n+1)]].$
\end{enumerate}
   \end{notation}

\begin{definition} \cite[Definition 2.1]{sam}
    Let $(R, \m)$ be a local ring of dimension $d >0$, and $I$ be an $\m$-primary ideal in $R$ with minimal reduction $Q$. Assume that  $Q=(x_{1}, \ldots, x_{d})$, where $x_{1},\ldots,x_{d}$ is a superficial sequence of $I$ and let $Q_{i}=(x_{1},\ldots,x_{i})$ for $0 \leq i \leq d$ where $Q_{0}=(0).$ For a non-negative integer $n$, define 
        \begin{align*}
        w_{n}(Q,I)
        = \begin{cases}
        0, & \mbox{ if } d=1 \\
     \displaystyle   \sum_{i=0}^{d-2} \left\{ \Delta^{d-1-i} \left[ \lambda\left( \dfrac{ (I^{n+1} + Q_i):x_{i+1}}{I^{n}+Q_i}\right)\right]
               -\lambda\left( \dfrac{(I^{n+1}+Q_{i}):x_{i+1}}{(QI^{n}+Q_{i}):x_{i+1}}\right) \right\}, & \mbox{ if } d \geq 2.
        \end{cases}
    \end{align*}
    Since $x_{1},\ldots,x_{d}$ is a superficial sequence of $I,$ from  \cite[Remark 2.2]{sam}, we have $w_{n}(Q,I)=0$ for sufficiently large $n.$ Furthermore, from \cite[Lemma 2.3]{sam}, if $d >1$, then 
        \begin{eqnarray}\label{wequation}
            w_{n}(Q,I)=w_{n}(Q',I')+\Delta^{d-1}\left[\lambda\left( \dfrac{(I^{n+1}:x_{1})}{I^{n}}\right)\right]-\lambda\left( \dfrac{I^{n+1}:x_{1}}{QI^{n}:x_{1}}\right),
         \end{eqnarray}
        for all $n \geq 1,$ where $'$ denotes image modulo $x_{1}.$
\end{definition}

The following proposition generalizes  \cite[Theorem~2.4]{sam}. 

\begin{proposition} \label{hilbcoeffextension}
    Let $(R,\m)$ be a Noetherian local ring of dimension $d \geq 1$ and $\depth R \geq d-1.$ Let $I$ be an $\m$-primary ideal  and let $Q=(x_{1},\ldots,x_{d})$ be a minimal reduction of $I$. 
     Then for all $n\geq 0$, 
    \begin{equation*}
    \Delta^{d}[P_{I}(n+1)-H_{I}(n+1)]=\lambda(l_{Q,I}(n))+w_{n}(Q,I)-\lambda(U_{I}(d;n)).
    \end{equation*}

\end{proposition}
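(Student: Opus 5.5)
Following Huckaba's proof of \cite[Theorem~2.4]{sam}, we argue by induction on $d$, using the recursion (\ref{wequation}) for $w_n(Q,I)$. By Remark~\ref{remarksup} we take $x_1,\ldots,x_d$ to be a superficial sequence of $I$ with $x_1,\ldots,x_{d-1}$ a regular sequence in $R$; this is possible since $\depth R\ge d-1$.

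\emph{Base case $d=1$.} Here $w_n(Q,I)=0$ and $U_I(1;n)$ reads as $(0:x_1)\cap I^n$. Consider the exact sequence
\[
0\to (0:_{I^n}x_1)\to I^n/I^{n+1}\xrightarrow{\cdot x_1} I^{n+1}/I^{n+2}\to I^{n+1}/x_1I^{n+1}\to\cdots,
\]
or, more directly, compute $\lambda(R/I^{n+1})-\lambda(R/x_1I^{n})$ and $\lambda(R/x_1I^n)-\lambda(R/I^{n})$. Using $\lambda(I^n/x_1I^n)=e_0(I)+\lambda(0:_{I^n}x_1)$, we get
\[
H_I(n+1)-H_I(n)=e_0(I)-\lambda(I^{n+1}/x_1I^n)+\lambda(0:_{I^n}x_1).
\]
Since $\Delta P_I(n+1)=e_0(I)$, this gives the formula $\lambda(l_{Q,I}(n))-\lambda(U_I(1;n))$. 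The length identity $\lambda(J/xJ)=e_0+\lambda(0:_Jx)$ for $J=I^n$ holds because $I^n$ is a one-dimensional module with $e(x;I^n)=e_0(I)$, and $\chi(x;M)=e(x;M)$.

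\emph{Inductive step, $d\ge 2$.} Set $R'=R/(x_1)$ and $I'=IR'$. Since $x_1$ is regular, $\dim R'=d-1$ and $\depth R'\ge d-2$, and $Q'=(x_2,\ldots,x_d)R'$ is a minimal reduction generated by a superficial sequence. Regularity and superficiality of $x_1$ give $P_{I'}(n)=\Delta P_I(n)$, that is, $e_i(I')=e_i(I)$ for $i\le d-1$. For the Hilbert functions we have the exact sequence
\[
0\to (I^{n+1}:x_1)/I^n\to R/I^n\xrightarrow{x_1}R/I^{n+1}\to R'/I'^{\,n+1}\to 0,
\]
which gives
\[
H_{I'}(n+1)=\Delta H_I(n+1)+\lambda\big((I^{n+1}:x_1)/I^n\big).
\]
Subtracting and applying $\Delta^{d-1}$ yields
\[
\Delta^d[P_I(n+1)-H_I(n+1)]=\Delta^{d-1}[P_{I'}(n+1)-H_{I'}(n+1)]+\Delta^{d-1}\Big[\lambda\big((I^{n+1}:x_1)/I^n\big)\Big].
\]
By induction the first term equals $\lambda(l_{Q',I'}(n))+w_n(Q',I')-\lambda(U_{I'}(d-1;n))$. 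Combining this with (\ref{wequation}), it suffices to show
\[
\lambda(l_{Q',I'}(n))=\lambda(l_{Q,I}(n))+\lambda\big((I^{n+1}:x_1)/(QI^n:x_1)\big),
\qquad U_{I'}(d-1;n)\cong U_I(d;n).
\]

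\emph{First identity.} We have $l_{Q',I'}(n)=I^{n+1}/(QI^n+(x_1)\cap I^{n+1})$ and $(x_1)\cap I^{n+1}=x_1(I^{n+1}:x_1)$, while $QI^n\cap (x_1)=x_1(QI^n:x_1)$. Hence
\[
\lambda(l_{Q,I}(n))-\lambda(l_{Q',I'}(n))=-\lambda\big(x_1(I^{n+1}:x_1)/x_1(QI^n:x_1)\big),
\]
and multiplication by the regular element $x_1$ preserves lengths, which gives the identity.

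\emph{Second identity.} Lifting to $R$, both $U_{I'}(d-1;n)$ and $U_I(d;n)$ equal
\[
\big(((x_1,\ldots,x_{d-1}):x_d)\cap(I^n+(x_1,\ldots,x_{d-1}))\big)/(x_1,\ldots,x_{d-1}),
\]
so this step is immediate.

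\emph{Main obstacle.} The hard part is the base of the induction, reached at $d-1=1$ for $R'$ with possibly $\depth R'=0$. This is exactly where $U$ enters, so the one-dimensional computation must be done carefully without any depth assumption. The intended value of $U_{I'}(1;n)$ is $(0:x_d)\cap I'^{\,n}$ in $R'$, which matches the lifted expression above. The key point is the identity $\lambda(J/xJ)-\lambda(0:_Jx)=e_0$ for $\m$-primary $J$ in dimension one, which I would justify via additivity of the Euler characteristic $\chi(x;-)$ and its invariance on modules of finite length.
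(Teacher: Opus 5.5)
Your strategy is the paper's own: induction on $d$ with a regular superficial $x_1$, the relation $\Delta[P_I(n+1)-H_I(n+1)]=P_{I'}(n+1)-H_{I'}(n+1)+\lambda((I^{n+1}:x_1)/I^n)$, Huckaba's recursion (\ref{wequation}), a comparison of $l_{Q,I}(n)$ with $l_{Q',I'}(n)$, and $U_{I'}(d-1;n)=U_I(d;n)$. Your base case via $\lambda(I^n/x_1I^n)-\lambda(0:_{I^n}x_1)=e_0(I)$ is fine. The exact sequence you write first is not correct: the kernel of $x_1\colon I^n/I^{n+1}\to I^{n+1}/I^{n+2}$ is $((I^{n+2}:x_1)\cap I^n)/I^{n+1}$. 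You do not use it, however.

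The real problem is the first identity, which you both state and derive with the wrong sign. After substituting (\ref{wequation}), the $\Delta^{d-1}$ terms cancel and you obtain
\[
\Delta^d[P_I(n+1)-H_I(n+1)]=\lambda(l_{Q',I'}(n))+\lambda\big((I^{n+1}:x_1)/(QI^n:x_1)\big)+w_n(Q,I)-\lambda(U_{I'}(d-1;n)).
\]
So what is actually needed is $\lambda(l_{Q,I}(n))=\lambda(l_{Q',I'}(n))+\lambda\big((I^{n+1}:x_1)/(QI^n:x_1)\big)$, with $l_{Q,I}$ and $l_{Q',I'}$ in the opposite positions from yours. The identity you wrote is false whenever $(QI^n:x_1)\subsetneq(I^{n+1}:x_1)$. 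Indeed, the natural surjection $l_{Q,I}(n)\to l_{Q',I'}(n)$ gives $\lambda(l_{Q',I'}(n))\le\lambda(l_{Q,I}(n))$.

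Your computation contains a compensating slip. Start from your correct description $l_{Q',I'}(n)=I^{n+1}/(QI^n+(x_1)\cap I^{n+1})$ and use $(x_1)\cap I^{n+1}\cap QI^n=(x_1)\cap QI^n$. Then
\[
\lambda(l_{Q,I}(n))-\lambda(l_{Q',I'}(n))=\lambda\Big(\frac{QI^n+(x_1)\cap I^{n+1}}{QI^n}\Big)=\lambda\Big(\frac{x_1(I^{n+1}:x_1)}{x_1(QI^n:x_1)}\Big)\ge 0.
\]
Since $x_1$ is regular, the right side equals $\lambda\big((I^{n+1}:x_1)/(QI^n:x_1)\big)$. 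With both signs corrected, this is exactly the paper's (\ref{eqlength}), and your argument then coincides with the paper's proof.
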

\begin{proof}
 We prove by induction on the dimension $d.$ If  $d=1,$ then  $w_{n}(Q,I)=0$ and hence
   \begin{align*}
    \Delta[P_{I}(n+1)-H_{I}(n+1)]&=e_{0}(I)-\lambda(I^{n}/I^{n+1}) & \\
     &=\lambda(I^{n+1}/x_1 I^{n}) - \lambda((0:x_{1})\cap I^{n})
      \mbox{ (by Lemma \ref{formulahilbcoeffindimone})}. 
   \end{align*}
   
 Hence the statement is true for $d=1$.
Assume $d \geq 2$ and that our assertion is true  for $d-1.$  
From Remark \ref{remarksup}, we choose $x_1$ such that it is a non-zero divisor in $R$. Set $R'=R/(x_{1}),$ $I'=IR'$ and $Q'=QR'.$  We have
   \begin{align} \label{eqlength}  \nonumber
       \lambda(l_{Q,I}(n))
 &=\lambda\left(\frac{I^{n+1}+(x_{1})}{QI^{n}+(x_{1})}\right)
       +\lambda\left(\frac{QI^{n}+(x_{1})}{QI^{n}}\right)
       -\lambda\left(\frac{I^{n+1}+(x_{1})}{I^{n+1}}\right)\\ \nonumber
&=\lambda\left(\frac{{I}^{n+1}+(x_{1})}{{QI^{n}}+(x_{1})}\right)
       + \lambda\left(\frac{(x_{1})}{ x_1(QI^{n}:x_{1}) } \right)
       -  \lambda\left(\frac{(x_{1})}{ x_1(I^{n+1}:x_{1})}  \right)\\ 
       &= \lambda(l_{Q',I'}(n))
       + \lambda\left(\frac{(I^{n+1}:x_{1})}{ (QI^{n}:x_{1}) } \right).
   \end{align}
 Consider
  \begin{align*}  \nonumber
    \Delta^{d}[P_{I}(n+1)-H_{I}(n+1)]=& \Delta^{d-1}[\Delta[P_{I}(n+1)-H_{I}(n+1)]] & \\\nonumber
    &=\Delta^{d-1}[P_{I'}(n+1)-H_{I'}(n+1)+\lambda((I^{n+1}:x_{1})/I^{n})]  \text{ (by \cite[22.6]{Nagata})} & \\
    &=\Delta^{d-1}[P_{I'}(n+1)-H_{I'}(n+1)]+\Delta^{d-1}[\lambda((I^{n+1}:x_{1})/I^{n})] \\
    &= \lambda(l_{Q',I'}(n))+w_{n}(Q',I') -\lambda (U_{I'}(d-1;n))\\
   & \quad + \Delta^{d-1}[\lambda((I^{n+1}:x_{1})/I^{n})]  \text{ (by induction hypothesis)}\\ \nonumber
        &= \lambda(l_{Q,I}(n))+w_{n}(Q,I) 
        -\lambda (U_{I}(d;n)) \text{ (by Equations (\ref{wequation}) and (\ref{eqlength}))}. &&\qedhere
\end{align*} 
\end{proof}
As an immediate consequence,  we have the following corollary.

\begin{corollary}\label{e_i}
      Let $(R,\m)$ be a Noetherian local ring of dimension $d \geq 1$ and $\depth R \geq d-1.$ Let $I$ be an $\m$-primary ideal with minimal reduction $Q=(x_{1},\ldots,x_{d})$. Then  for $1\leq i\leq d$, we have 
    \[e_{i}(I)=\displaystyle \sum_{n=i-1}^{\infty}\binom{n}{i-1}\left[ \lambda(l_{Q,I}(n))+w_{n}(Q,I)-\lambda(U_{I}(d;n))\right].\]
\end{corollary}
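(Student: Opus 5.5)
The plan is to derive Corollary~\ref{e_i} from Proposition~\ref{hilbcoeffextension} by the standard passage from the $d$-th difference of $P_I - H_I$ to the Hilbert coefficients through generating functions. Set
\[
f(n) := \lambda(l_{Q,I}(n)) + w_n(Q,I) - \lambda(U_I(d;n)).
\]
By Proposition~\ref{hilbcoeffextension}, $f(n) = \Delta^{d}[P_I(n+1) - H_I(n+1)]$ for all $n \geq 0$. The first step is to check that $f(n) = 0$ for $n \gg 0$. Indeed, $l_{Q,I}(n)=0$ for $n \geq r_Q(I)$, and $w_n(Q,I)=0$ for large $n$ by \cite[Remark 2.2]{sam}. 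Alternatively, and more simply, the left-hand side vanishes for large $n$ because $P_I - H_I$ is eventually zero, which makes $U_I(d;n)$ eventually zero as well.

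The second step is to translate into power series. I would write
\[
\sum_{n \geq 0} H_I(n+1)\, t^n = \frac{h_I(t)}{t(1-t)^{d+1}},
\]
up to the constant term adjustment, or more cleanly work with $\sum_n (P_I(n+1)-H_I(n+1))t^n$. Applying $\Delta^d$ corresponds to multiplying by $(1-t)^d$, with boundary terms at negative indices to be controlled. Here I would adopt the convention $H_I(n)=0$ for $n<0$, which matches $\lambda(R/I^n)=0$ for $n \leq 0$. The cleanest route is to set $F(t) = \sum_{n\ge0} f(n)t^n$, a polynomial, and to compare it with $h$-polynomials. From $\sum_{n\ge 0}\lambda(I^n/I^{n+1})t^n = h_I(t)/(1-t)^d$, one obtains $\Delta^d[H_I(n+1)]$ as the coefficient of $t^n$ in $h_I(t)$. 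Meanwhile $\Delta^d[P_I(n+1)] = e_0(I)$ for all $n$ (with $P_I$ extended as a polynomial), giving $F(t) = e_0(I)/(1-t) - h_I(t)$ modulo terms coming from the negative-index values of $P_I$. Those terms need careful bookkeeping.

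The main obstacle is exactly this boundary issue: $\Delta^d$ of $P_I(n+1)$ at small $n$ involves $P_I(n+1-j)$ for negative arguments, whereas $H_I$ there is $0$. To avoid it, I would instead argue via the finite-difference identity directly. Write $g(n) = P_I(n) - H_I(n)$, which is $0$ for $n\gg0$, and note that for $n\le 0$ one has $g(n)=P_I(n)$. Then use the identity
\[
(-1)^i e_i(I) \text{ expressed through } \sum_{n} \binom{n}{i-1}\Delta^d g(n+1),
\]
obtained by summing the $d$-fold difference back with binomial weights (summation by parts $d$ times from $+\infty$ downward). Concretely, for a function vanishing at large $n$ one has $\sum_{n \geq m}\Delta^{d}[g(n+1)]\binom{n-m+d-1}{d-1} = (-1)^d g(m)$. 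I would then evaluate at $m=0,-1,\dots$, or equivalently compare the Taylor expansion at $t=1$. This yields $e_i(I) = \sum_{n\ge i-1}\binom{n}{i-1}f(n)$, exactly as in Huckaba's \cite[Corollary 2.10]{sam}. The computation is identical to Huckaba's once Proposition~\ref{hilbcoeffextension} replaces \cite[Theorem 2.4]{sam}, so I would cite that derivation, noting that it uses only the difference identity and not the Cohen--Macaulay hypothesis. The $e_i(I)=h^{(i)}(1)/i!$ formula then gives $i!^{-1}F^{(i-1)}$-type sums, and these are precisely $\sum_n \binom{n}{i-1} f(n)$.
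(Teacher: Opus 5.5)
Your route is the paper's: it combines Proposition~\ref{hilbcoeffextension}, which holds for all $n\ge 0$, with Huckaba's purely numerical identity $e_i(I)=\sum_{n\ge i-1}\binom{n}{i-1}\Delta^{d}[P_I(n+1)-H_I(n+1)]$ from \cite[Proposition 2.9]{sam}. That identity uses no Cohen--Macaulay hypothesis, so citing it, as you finally propose, completes the proof.

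The self-contained bookkeeping you sketch along the way does contain slips, and the boundary obstacle you worry about does not exist.

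First, take $H_I(k)=0$ for $k\le 0$. The sequence $\Delta[H_I(n+1)]$ equals $\lambda(I^n/I^{n+1})$ for $n\ge 0$ and $0$ for $n<0$, so its generating function is $h_I(t)/(1-t)^d$. Hence $\Delta^{d}[H_I(n+1)]$ is the coefficient of $t^n$ in $h_I(t)/(1-t)$, not in $h_I(t)$. Also, $\Delta^d[P_I(n+1)]=e_0(I)=h_I(1)$ for every integer $n$, as a polynomial identity. So, with no correction terms at all,
\[
F(t)=\sum_{n\ge 0}f(n)t^n=\frac{h_I(1)-h_I(t)}{1-t}=\sum_{k\ge 1}e_k(I)(t-1)^{k-1},
\]
where $h_I(t)=\sum_{k\ge 0}e_k(I)(t-1)^k$ with $e_k(I)=h_I^{(k)}(1)/k!$. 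Comparing coefficients of $(t-1)^{i-1}$ then gives $e_i(I)=F^{(i-1)}(1)/(i-1)!=\sum_{n\ge i-1}\binom{n}{i-1}f(n)$ directly.

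Second, your summation-by-parts formula is off by a shift. For $g$ vanishing at large $n$, the correct identity is $\sum_{n\ge m}\binom{n-m+d-1}{d-1}\Delta^d[g(n+1)]=(-1)^d g(m-d+1)$, not $(-1)^d g(m)$. You can check this with $d=2$, $m=1$, $g(n)=\delta_{n,0}$.
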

\begin{proof}
  For $1\leq i \leq d$, from \cite[Proposition 2.9]{sam}, we have $e_{i}(I)=\displaystyle \sum_{n=i-1}^{\infty}\binom{n}{i-1} \Delta^{d}[P_{I}(n+1)-H_{I}(n+1)]$. Thus, the conclusion follows  directly from Proposition \ref{hilbcoeffextension}.
\end{proof}

The following lemma will be used in  our paper. 
\begin{lemma} \label{ozekinumericallemma}
\cite[Lemma 3.1]{ozeki}
 Let $l \geq 0$ be an integer. Suppose that $\{v_{n}\}_{n \geq 1}$ is the set of integers such that
 \begin{enumerate}[\normalfont(i)]
     \item $v_{n} \geq 0$ for all $n \geq 1,$
     \item $\displaystyle\sum_{n \geq 1}v_{n} \leq l$, and
     \item $v_{j}=0$ for all $j \geq n$ once $v_{n}=0$ for some $n \geq 1.$
 \end{enumerate}
Then we have $\displaystyle \sum_{n\geq 1}nv_{n} \leq \binom{l+1}{2}$. Further,  $\displaystyle \sum_{n\geq 1}nv_{n} = \binom{l+1}{2}$ if and only if $v_{n}=1$ for all $1 \leq n \leq l$, and $v_{n}=0$ for all $n \geq l+1.$  
\end{lemma}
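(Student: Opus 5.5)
The plan is a direct counting argument: first show that the support of $\{v_n\}$ is an initial segment of length at most $l$, then bound $\sum_{n\ge 1} n v_n$ by concentrating all the excess mass on the largest index in that segment.

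\textbf{Step 1 (support).} If all $v_n=0$ the inequality is trivial, and equality forces $l=0$, which matches the stated characterization. Otherwise, condition (ii) together with $v_n\ge 0$ shows that only finitely many $v_n$ are positive. Set $m=\max\{n : v_n>0\}$.

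By condition (iii), no $v_n$ with $n\le m$ can vanish: if $v_k=0$ for some $k\le m$, then $v_m=0$, a contradiction. Hence $v_n\ge 1$ for $1\le n\le m$, and $v_n=0$ for $n>m$. It follows that $m\le \sum_{n\ge 1} v_n\le l$.

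\textbf{Step 2 (bounding the weighted sum).} For $1\le n\le m$ write $v_n=1+u_n$ with $u_n\ge 0$. Then
\[
\sum_{n=1}^{m} u_n \;\le\; l-m ,
\]
and since each index satisfies $n\le m$,
\[
\sum_{n\geq 1} n v_n=\binom{m+1}{2}+\sum_{n=1}^m n u_n \le \binom{m+1}{2}+m(l-m).
\]
A short computation gives
\[
\binom{l+1}{2}-\binom{m+1}{2}-m(l-m)=\frac{(l-m)(l+m+1)}{2}-m(l-m)=\binom{l-m+1}{2}\ge 0 ,
\]
which proves the inequality.

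\textbf{Step 3 (equality case).} Suppose $\sum_{n\ge 1} n v_n=\binom{l+1}{2}$. Then $\binom{l-m+1}{2}=0$. Since $l-m\ge 0$, this forces $m=l$.

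With $m=l$, the constraint $\sum_{n=1}^{m} u_n\le l-m=0$ gives $u_n=0$ for all $n$. So $v_n=1$ for $1\le n\le l$ and $v_n=0$ for $n>l$.

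Conversely, this sequence satisfies (i)--(iii) and gives $\sum_{n\ge 1} n v_n=\binom{l+1}{2}$ directly.

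The only point requiring care is Step 1, namely using (iii) to rule out gaps in the support. Once that is in place, everything else is elementary arithmetic.
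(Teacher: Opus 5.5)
Your proof is correct and complete. Condition (iii) forces the support of $\{v_n\}$ to be an initial segment $\{1,\dots,m\}$ with $v_n\ge 1$ on it, so $m\le l$. The bound $\sum_{n}nv_n\le\binom{m+1}{2}+m(l-m)$ and the identity
\[
\binom{l+1}{2}-\binom{m+1}{2}-m(l-m)=\binom{l-m+1}{2}
\]
both check out. The equality analysis handles both directions: it forces $m=l$ and then all $u_n=0$, and the degenerate case $v_n\equiv 0$, $l=0$ is treated separately.

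The paper gives no proof of its own to compare against. The lemma is quoted from Ozeki (Lemma 3.1 there) and is only applied, to $v_n=\lambda(l_{Q,I}(n))$ or $\lambda(l_{Q_{d-1},I_{d-1}}(n))$ with $l=\mathrm{g}_s(I)-\mathrm{g}_s(Q)-(e_0(I)-e_1(Q))e_1(Q)$. Your argument supplies a self-contained replacement. Its useful feature is the explicit lower bound $\binom{l-m+1}{2}$ on the slack $\binom{l+1}{2}-\sum_n nv_n$. This makes the equality case immediate, and that case is what the paper uses in Theorem~\ref{finalupperbounde2} to get $\lambda(l_{Q,I}(n))=1$ for $1\le n\le t$ and hence $r_Q(I)=t+1$.
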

   Note that for any $\m$-primary ideal $I$ with minimal reduction $Q,$ the  integers $\{\lambda(l_{Q,I}(n))\}_{n \geq 1}$ satisfies all conditions (i), (ii) and (iii) of Lemma \ref{ozekinumericallemma}.

\section{A Lower Bound for the Second Hilbert Coefficient} \label{section3}

In this section,  we establish a lower  bound for  the second Hilbert coefficient of an $\m$-primary ideal $I$.  
A lower bound for  $e_2(I)$ in a two dimensional Cohen-Macaulay local ring  was given by Rossi and Valla in  \cite[Theorem~3.1(a)]{rv}.  More precisely they proved that   $e_2(I) \geq e_1(I) - e_0(I) + \lambda( R/\widetilde{I})$.  We generalize this result to the case when the ring $R$ is almost Cohen-Macaulay. 
Before we prove our result for the lower bound, we give some preliminary lemmas. 

\begin{lemma} \label{2-II}
    Let $(R,\m)$ be a two-dimensional  local ring with $\depth R>0$ and let $I$ be an $\m$-primary ideal. Let $x_{1}, x_{2} $ be a superficial sequence of $I$ such that $Q=(x_{1},x_{2})$ is a minimal reduction of $I.$  Set $R' = R/(x_1)$ and $I' = IR'$. Then
    \begin{align*}
    e_{2}(I)= \displaystyle\sum_{n \geq 1} n\left[\lambda(l_{Q', I'}(n)
- \lambda(U_{I}(2;n))\right]
-  \displaystyle \sum_{n\geq 0}\lambda((I^{n+1}:x_1)/I^{n}).
\end{align*}
\end{lemma}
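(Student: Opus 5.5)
The plan is to specialize Corollary \ref{e_i} to $d=2$, $i=2$, unwind $w_n(Q,I)$ explicitly, and then use a summation by parts.

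\emph{Step 1.} Since $\depth R>0$ and $x_1$ is superficial for $I$, $x_1$ is a non-zero divisor of $R$; this is the standard fact for superficial elements in rings of positive depth, and alternatively one may choose $Q$ this way by Remark \ref{remarksup}. Then Corollary \ref{e_i} applies with $d=2$, $i=2$ and gives
\[ e_2(I)=\sum_{n\geq 1} n\left[\lambda(l_{Q,I}(n))+w_n(Q,I)-\lambda(U_I(2;n))\right]. \]
For $d=2$ only the $i=0$ term appears in the definition of $w_n(Q,I)$, and $Q_0=(0)$. Writing $a_n:=\lambda\bigl((I^{n+1}:x_1)/I^n\bigr)$, this gives
\[ w_n(Q,I)=\Delta[a_n]-\lambda\bigl((I^{n+1}:x_1)/(QI^n:x_1)\bigr). \]

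\emph{Step 2.} Equation (\ref{eqlength}) in the proof of Proposition \ref{hilbcoeffextension} used only that $x_1$ is a non-zero divisor. It gives
\[ \lambda(l_{Q,I}(n))=\lambda(l_{Q',I'}(n))+\lambda\bigl((I^{n+1}:x_1)/(QI^n:x_1)\bigr). \]
Substituting into Step 1, the colon terms $\lambda\bigl((I^{n+1}:x_1)/(QI^n:x_1)\bigr)$ cancel. We are left with
\[ e_2(I)=\sum_{n\ge1} n\left[\lambda(l_{Q',I'}(n))-\lambda(U_I(2;n))\right]+\sum_{n\ge1} n\,(a_n-a_{n-1}). \]
Splitting the sum is legitimate because each summand is eventually zero. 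For the first sum, $I^{n+1}=QI^n$ for $n\gg0$ gives $l_{Q',I'}(n)=0$ eventually. For the second, since $x_1$ is superficial and regular, $I^{n+1}:x_1=I^n$ for $n\gg0$, so $a_n=0$ eventually. The remaining first sum must then also be finite.

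\emph{Step 3.} Apply Abel summation. For $N$ large,
\[ \sum_{n=1}^N n(a_n-a_{n-1})=Na_N-\sum_{n=0}^{N-1}a_n. \]
Since $a_N=0$ for $N\gg0$, this equals $-\sum_{n\ge0}a_n$, which is the claimed formula.

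The only delicate point is justifying that $x_1$ is a non-zero divisor and that $I^{n+1}:x_1=I^n$ for large $n$. These are exactly what make both equation (\ref{eqlength}) and the vanishing of the boundary term valid. Everything else is bookkeeping.
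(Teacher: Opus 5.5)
Your argument is correct, but it takes a longer route than the paper. The paper's proof is two lines. First, Proposition \ref{hibertcoeffsup}(iv), with $\lambda(0:x_1)=0$, gives directly $e_2(I)=e_2(I')-\sum_{n\ge 0}\lambda((I^{n+1}:x_1)/I^n)$. Second, Lemma \ref{formulahilbcoeffindimone}, applied in the one-dimensional ring $R'$ to the superficial element $\bar x_2$, gives $e_2(I')=\sum_{n\ge 1}n[\lambda(l_{Q',I'}(n))-\lambda(U_I(2;n))]$, because $Q'=(\bar x_2)$ and $0:_{I'^n}\bar x_2=U_I(2;n)$.

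You instead start from the two-dimensional Huckaba-type formula of Corollary \ref{e_i}. You cancel the colon terms of $w_n(Q,I)$ against those in (\ref{eqlength}), and recover the correction $-\sum_{n\ge0} a_n$ from $\sum_{n\ge 1} n\,\Delta[a_n]$ by Abel summation. Corollary \ref{e_i} was itself obtained, via Proposition \ref{hilbcoeffextension}, by passing to $R'$ and using the one-dimensional formula. So your computation essentially unwinds that construction, and your Abel summation step reproves the $e_2$ case of Proposition \ref{hibertcoeffsup}(iv) for a regular superficial element.

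What your route buys is that it stays entirely inside the paper's Section 2 machinery. It also shows exactly where the $-\sum a_n$ term originates, namely the $\Delta$-part of $w_n$. The cost is extra bookkeeping: identifying $w_n$ for $d=2$, and checking that each piece vanishes for large $n$ so the sums can be split. The paper's direct route needs none of this.

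One small point: your alternative justification of regularity of $x_1$ via Remark \ref{remarksup} is not available here. The element $x_1$ is fixed by the statement, since the formula involves $R/(x_1)$. Your first justification suffices, though: a superficial element is a non-zero divisor once $\depth R>0$.
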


\begin{proof}
    From Proposition \ref{hibertcoeffsup} (iv),  we have 
 \begin{align*}
 \nonumber
e_{2}(I) 
& =  e_{2}(I')-  \displaystyle \sum_{n\geq 0}\lambda((I^{n+1}:x_1)/I^{n})    \\
&= \displaystyle\sum_{n \geq 1} n\left[\lambda(l_{Q', I'}(n)
- \lambda(U_{I}(2;n))\right]
-  \displaystyle \sum_{n\geq 0}\lambda((I^{n+1}:x_1)/I^{n}) \text{ (by Lemma \ref{formulahilbcoeffindimone})}. && \qedhere
\end{align*} 
\end{proof}

\begin{lemma} 
\label{pp}
    Let $(R,\m)$  local ring of dimension one  and let  $I$ be an $\m$-primary ideal with minimal reduction $Q=(x),$ where $x$ is a superficial element for $I$.  Then the following hold:
    \begin{enumerate}[\normalfont(i)]
        \item  
        \label{pp-1}
        For all  $n \geq e_{0}(I)-e_{1}(Q)$,  we have 
        $(0:x) \cap I^{n}=0$,
        \item 
        \label{pp-2}
        $\displaystyle \sum_{n\geq 0}\lambda((0:x) \cap I^{n})
        \leq (e_{0}(I)-e_{1}(Q))\lambda( H^{0}_{\m}(R)).$
    \end{enumerate} 
\end{lemma}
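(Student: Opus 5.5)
The plan is to reduce to the Cohen--Macaulay quotient $\bar R=R/W$, where $W:=H^{0}_{\m}(R)$, and then control the finite-length piece $W\cap I^{n}$. Since $\dim R=1$, we have $W=0:_R x^{k}$ for $k\gg 0$, so $(0:x)\subseteq W$. Moreover $\bar R$ is Cohen--Macaulay of dimension one and the image of $x$ is a non-zero divisor in $\bar R$.

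First I record the standard formula for a parameter ideal in dimension one: $\lambda(R/x^{n}R)=n\,e_0(Q)+\lambda(W)$ for $n\gg 0$. Hence $e_1(Q)=-\lambda(H^{0}_{\m}(R))$, so that $e_0(I)-e_1(Q)=e_0(I)+\lambda(W)$, and the statement becomes a bound in terms of $e_0(I)$ and $\lambda(W)$. Next I use that $e_0(I\bar R)=e_0(I)$, because $W$ has finite length, and that $x$ stays superficial for $I\bar R$. In the one-dimensional Cohen--Macaulay ring $\bar R$, the reduction number satisfies $r_{(x)}(I\bar R)\le e_0(I)-1$. Lifting back to $R$, this gives
\[
I^{n+1}\subseteq xI^{n}+W \qquad \text{for all } n\ge e_0(I)-1 .
\]

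The key step, and the one I expect to be the main obstacle, is to show that $N_n:=W\cap I^{n}$ strictly decreases in length at each step $n\ge e_0(I)-1$ until it becomes $0$. Take $y\in N_{n+1}$ and write $y=xa+w$ with $a\in I^{n}$ and $w\in W$. Then $xa\in W$, and since $x$ is regular on $R/W$ we get $a\in W$, so $a\in N_n$. Thus $N_{n+1}\subseteq xN_n+(W\cap(I^{n+1}+xN_n))$. This relation is not yet clean, and I would refine it using the superficial property $(I^{n+1}:x)\cap I^{c}=I^{n}$ to arrive at something like $N_{n+1}\subseteq xN_n+\m N_n$, or at least $N_{n+1}\neq N_n$ whenever $N_n\neq 0$.

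As a fallback, suppose $N_{n+1}=N_n$. Iterating gives $N_n\subseteq x^{k}N_n+N_{n+k}$ for every $k$. Since $x^{k}W=0$ for $k\gg 0$, and $N_{n+k}=0$ for $k\gg 0$ by Krull's intersection theorem, this forces $N_n=0$. Once strict decrease is established, $\lambda(N_n)$ drops by at least one per step starting from $\lambda(N_{e_0(I)-1})\le\lambda(W)$. Therefore $N_n=0$ for $n\ge e_0(I)-1+\lambda(W)$, which is in particular the case for all $n\ge e_0(I)-e_1(Q)$. Since $(0:x)\cap I^{n}\subseteq N_n$, this proves (i).

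For (ii), each summand satisfies $\lambda((0:x)\cap I^{n})\le\lambda(W)$, and by (i) the summands vanish for $n\ge e_0(I)-e_1(Q)$. Only the $e_0(I)-e_1(Q)$ indices $n=0,\dots,e_0(I)-e_1(Q)-1$ contribute, so the sum is at most $(e_0(I)-e_1(Q))\,\lambda(H^{0}_{\m}(R))$.
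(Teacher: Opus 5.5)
Your reduction to $\bar R=R/W$ is fine, and so are $e_1(Q)=-\lambda(W)$, the bound $r_{(x)}(I\bar R)\le e_0(I)-1$, the resulting identity $I^{n+1}=xI^n+N_{n+1}$ for $n\ge e_0(I)-1$ (modular law), and your deduction of (ii) from (i).

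\textbf{The gap is the key step, which is never proved.} The relation you derive is empty. Since $xN_n\subseteq I^{n+1}$, you have $W\cap(I^{n+1}+xN_n)=N_{n+1}$, so it only says $N_{n+1}\subseteq xN_n+N_{n+1}$. The fallback is unsupported as well. Iterating to $N_n\subseteq x^kN_n+N_{n+k}$ requires passing from $N_{n+1}=N_n$ to $N_{n+2}=N_{n+1}$, and nothing you have shown gives that.

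Worse, the claim you aim at, strict decrease from $n=e_0(I)-1$ on, is false. Take $R=k[[X,Y]]/(XY,Y^m)$ with $m\ge 2$, $I=\m$, and $x$ the image of $X$. This $x$ is superficial and $\m^{n+1}=x\m^n$ for $n\ge m-1$. Then:
\begin{itemize}
\item $e_0(I)=1$;
\item $W=(0:x)=(y)$ has length $m-1$;
\item $\m^n=(x^n,y^n)$, so $N_0=N_1=W$ and $N_{m-1}=(y^{m-1})\neq 0$.
\end{itemize}
Yet $m-1=e_0(I)-1+\lambda(W)$, contradicting your intermediate conclusion. Here $e_0(I)-e_1(Q)=m$, so the lemma is sharp in this example.

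\textbf{The missing idea} is to multiply the whole decomposition by $I$ rather than lift single elements. For $n\ge e_0(I)-1$ you have $I^{n+2}=I\cdot I^{n+1}=xI^{n+1}+IN_{n+1}$. Since $IN_{n+1}\subseteq W$ and $x$ is regular on $R/W$,
\[
N_{n+2}=W\cap\bigl(xI^{n+1}+IN_{n+1}\bigr)=(W\cap xI^{n+1})+IN_{n+1}=xN_{n+1}+IN_{n+1}=IN_{n+1}.
\]
Hence $N_{m+1}=IN_m$ for all $m\ge e_0(I)$. Nakayama then gives $\lambda(N_{m+1})<\lambda(N_m)$ whenever $N_m\neq 0$, but only from $m=e_0(I)$ on. Starting from $\lambda(N_{e_0(I)})\le\lambda(W)$, you get $N_n=0$ for $n\ge e_0(I)+\lambda(W)=e_0(I)-e_1(Q)$. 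This is exactly the required threshold with no slack, so the index shift matters.

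With this repair your route becomes a self-contained elementary proof. The paper argues differently: it embeds $\bigoplus_n\bigl(I^{n+1}+(0:x)\cap I^n\bigr)/I^{n+1}$ into $H^0_{G_+}(G(I))$ and quotes the bound $a_0(G(I))\le e_0(I)-e_1(Q)-1$ of Mandal--Nanduri. The repaired argument avoids both the local cohomology and that external citation.
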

\begin{proof}

(\ref{pp-1}) We have the following natural injective map of $G(I)$-modules:
\begin{eqnarray}
  \label{ses bn(I)}                              0 
\longrightarrow \displaystyle \bigoplus_{n\geq 0}\frac{I^{n+1}+[(0:x) \cap I^{n}]}{I^{n+1}} 
\longrightarrow \displaystyle G(I).
\end{eqnarray}

As $x \in I$ is a superficial element of $I,$ thus from \cite[Theorem~1.2~(6)]{rv}, we have $(0:x)\cap I^{n}=0$ for sufficiently large $n$, therefore, the first term in the above sequence is Artinian. Hence  (\ref{ses bn(I)}) induces the injective map
 \begin{eqnarray} \label{pbks}
              0  
 \longrightarrow H^{0}_{G_{+}}\left( \bigoplus_{n\geq 0}\frac{I^{n+1} + [(0:x)\cap I^{n}]}{I^{n+1}}\right)  =\displaystyle\bigoplus_{n\geq 0}\frac{I^{n+1} + [(0:x)\cap I^{n}]}{I^{n+1}}
\longrightarrow H^{0}_{G_{+}}\left(G(I)\right),
         \end{eqnarray}
where $G_{+}=\displaystyle \bigoplus_{n \geq 1}I^{n}/I^{n+1}.$
Since  $H^{0}_{G_{+}}\left(G(I)\right)_{n}=0$ for all $n \geq a_{0}(G(I))+1$, where $a_{0}(G(I))=\sup\{n\in \mathbb Z: (H^{0}_{G+}(G(I)))_{n}\neq 0\}$,  thus, from  (\ref{pbks})
        we get that for all  $n \geq a_{0}(G(I))+1$, 
\begin{equation*}
    \frac{I^{n+1}+[(0:x)\cap I^{n}]}{I^{n+1}}\cong \frac{(0:x)\cap I^{n}}{(0:x)\cap I^{n+1}}=0  .
\end{equation*}
Thus, $(0:x) \cap I^{n}=0$ for all $n \geq a_{0}(G(I))+1$.
From \cite[Corollary~4.5]{mandal-nanduri}, 
we get $a_{0}(G(I))\leq e_{0}(I)-e_{1}(Q)-1.$ Thus, $(0:x) \cap I^{n}=0$ for all $n \geq e_{0}(I)-e_{1}(Q).$

(\ref{pp-2}) For every $n \geq 0,$ we have $\lambda((0:x)\cap I^{n}) \leq \lambda(0:x)$. Hence, from (\ref{pp-1}), we have
\begin{equation*}
    \sum_{n\geq0}\lambda((0:x) \cap I^{n})
    \leq \sum_{n=0}^{e_{0}(I)-e_{1}(Q)-1}\lambda(0:x)=(e_{0}(I)-e_{1}(Q))\lambda(0:x)\leq (e_{0}(I)-e_{1}(Q))\lambda\left(H^{0}_{\m}(R)\right). 
\end{equation*}
The last inequality follows from \cite[Lemma 2.3]{rv}.
\end{proof}

We  now establish a lower bound for $e_2(I)$. If the ring is  Cohen-Macaulay, then we recover Marley's result  \cite[Corollary~2(3)]{marley}.

 \begin{proposition} \label{newandfinalproofoflowerbound}
      Let $(R,\m)$ be a  local ring of dimension $d\geq 2$ and let $I$ be an $\m$-primary ideal.  If $\depth G(I) \geq d-1,$ then  we have the following:
 \begin{enumerate} [\normalfont (i)]
     \item 
     \label{newandfinalproofoflowerbound-1}
     For any minimal reduction $Q$ of $I$, 
      \begin{equation*} 
      e_{2}(I) \geq  \mathrm{g}_{s}(I)-\mathrm{g}_{s}(Q)+e_{1}(Q)\dbinom{e_{0}(I)-e_{1}(Q)}{2}.
       \end{equation*}
              \item 
       \label{newandfinalproofoflowerbound-2}
      \cite[Corollary~2(3)]{marley}
       If $R$ is Cohen-Macaulay, then 
       $e_{2}(I)\geq e_{1}(I)-e_{0}(I)+\lambda(R/I)$.    
 \end{enumerate}
 \end{proposition}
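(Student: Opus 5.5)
The plan is to reduce to dimension two by going modulo a superficial sequence, and then read the bound off the one-dimensional formulas in Lemma \ref{formulahilbcoeffindimone} and Lemma \ref{pp}.

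\textbf{Reduction to $d=2$.} By Remark \ref{remarksup}, choose a minimal reduction $Q=(x_1,\ldots,x_d)$ generated by a superficial sequence. Since $\depth G(I)\geq d-1$, we may also assume (infinite residue field) that $x_1^*,\ldots,x_{d-1}^*$ is a $G(I)$-regular sequence. Set $J=(x_1,\ldots,x_{d-2})$ and pass to $\bar R=R/J$.

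\begin{enumerate}[\normalfont(a)]
\item Regularity of the initial forms gives $HS_{\bar I}(t)=(1-t)^{d-2}HS_I(t)$. Hence $e_i(\bar I)=e_i(I)$ for $i\le 2$ and $\depth G(\bar I)\geq 1$.
\item Since $J\subseteq Q\subseteq I$, we have $\lambda(R/I)=\lambda(\bar R/\bar I)$ and $\lambda(R/Q)=\lambda(\bar R/\bar Q)$.
\item $x_1,\ldots,x_{d-2}$ is $R$-regular, because $x_i^*$ regular in $G(I)$ forces $x_i$ regular. So Proposition \ref{hibertcoeffsup}(ii),(iii) give $e_0(\bar Q)=e_0(Q)=e_0(I)$ and $e_1(\bar Q)=e_1(Q)$; the correction term $\lambda(0:x)$ vanishes at each step.
\end{enumerate}

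Thus $\mathrm{g}_s$, $e_1(Q)$ and $e_2(I)$ are unchanged, and we may assume $d=2$ with $x_1^*$ regular in $G(I)$.

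\textbf{The two-dimensional computation.} Set $R'=R/(x_1)$, $I'=IR'$, $x=x_2$, and $u_n=\lambda((0:x)\cap I'^n)$. Since $x_1^*$ is $G(I)$-regular, $I^{n+1}:x_1=I^n$ for all $n$, so the last sum in Lemma \ref{2-II} vanishes and $e_2(I)=e_2(I')$. The same regularity gives $e_1(I')=e_1(I)$ and $e_1(Q')=e_1(Q)$, so below the primes on $e_1$ can be dropped. Lemma \ref{formulahilbcoeffindimone} then gives
\[
e_2(I)=\sum_{n\ge1}n\,\lambda(l_{Q',I'}(n))-\sum_{n\ge1}n\,u_n .
\]
We have $\lambda(l_{Q',I'}(0))=\lambda(R/Q)-\lambda(R/I)$. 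Using $k=1$ in Lemma \ref{formulahilbcoeffindimone},
\[
\sum_{n\ge1}\lambda(l_{Q',I'}(n))=e_1(I)-\lambda(R/Q)+\lambda(R/I)+\sum_{n\ge0}u_n .
\]
Bounding $n\lambda(l(n))\ge\lambda(l(n))$ termwise then yields
\[
e_2(I)\ \ge\ \mathrm{g}_s(I)-\mathrm{g}_s(Q)+e_1(Q)+u_0-\sum_{n\ge2}(n-1)u_n .
\]

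\textbf{Controlling the error terms.} For the parameter ideal $Q'=(x)$ in the one-dimensional ring $R'$, applying Lemma \ref{formulahilbcoeffindimone} to $Q'$ itself (where all $l$-terms vanish) together with the fact that $(0:x)\cap (x)^n=0$ for $n\gg0$ should give $e_1(Q)=-\lambda(0:x)=-\lambda(H^0_\m(R'))$. Hence $e_1(Q)+u_0=0$. By Lemma \ref{pp}, $u_n=0$ for $n\ge m:=e_0(I)-e_1(Q)$, and $u_n\le\lambda(0:x)=-e_1(Q)$ for all $n$. Therefore
\[
\sum_{n=2}^{m-1}(n-1)u_n\le -e_1(Q)\binom{m-1}{2}\le -e_1(Q)\binom{m}{2},
\]
which gives (i).

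\textbf{Part (ii) and the main obstacle.} If $R$ is Cohen–Macaulay, then $Q$ is generated by a regular sequence. So $e_1(Q)=0$ and $\mathrm{g}_s(Q)=\lambda(R/Q)-e_0(Q)=0$, and (ii) follows from (i). The step I expect to need the most care is the identification $e_1(Q)=-\lambda(0:x)$ together with $\lambda(0:x)=\lambda(H^0_\m(R'))$ for the superficial parameter $x$. If only the inequality $\lambda(0:x)\le\lambda(H^0_\m)$ of \cite[Lemma 2.3]{rv} is available, I would instead keep $u_0-\lambda(H^0_\m)$ as a term and check that the slack between $\binom{m-1}{2}$ and $\binom{m}{2}$ absorbs it.
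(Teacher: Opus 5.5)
\textbf{Where the proof breaks.} Your overall route is the paper's. You cut down by a superficial sequence whose first $d-1$ initial forms are $G(I)$-regular, write $e_2(I)=\sum_{n\ge1}n[\lambda(l_{Q',I'}(n))-u_n]$, trade $\sum n\lambda(l(n))$ for $\sum\lambda(l(n))$ and rewrite that via $e_1(I')$, then truncate with Lemma \ref{pp}. Your inequality
\[ e_2(I)\ \ge\ \mathrm{g}_s(I)-\mathrm{g}_s(Q)+e_1(Q)+u_0-\sum_{n\ge 2}(n-1)u_n \]
is correct and is exactly the paper's intermediate display. The step that fails is the next one: the claim $e_1(Q)=-\lambda(0:x)$, hence $e_1(Q)+u_0=0$, is false in general. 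Lemma \ref{formulahilbcoeffindimone} applied to $Q'=(x)$ actually gives
\[ e_1(Q')=-\sum_{n\ge 0}\lambda\bigl((0:x)\cap x^nR'\bigr). \]
The terms with $n\ge 1$ need not vanish; the fact that they vanish for $n\gg0$ only makes the sum finite. Equivalently, $-e_1(Q')=\lambda(H^0_{\m}(R'))$ is true, but $\lambda(0:x)=\lambda(H^0_{\m}(R'))$ is not, because a superficial $x$ need not annihilate $H^0_{\m}(R')$. For a concrete instance, take $R=k[[X,Y,T]]/(Y^2,X^2Y)$, $I=\m$, $Q=(T,X)$. Here $T^*$ is $G(\m)$-regular and $Q\m=\m^2$. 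In $R'=R/(T)$, $0:_{R'}x=(xy)$ has length $1$, while $\lambda(R'/x^nR')=n+2$. So $e_1(Q)=e_1(Q')=-2$ (as $R=R'[[T]]$), and $e_1(Q)+u_0=-1\neq 0$.

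\textbf{The repair.} Your fallback is the right fix, and it is exactly what the paper does: it discards $\lambda(U_I(d;0))\ge 0$ and silently absorbs the leftover $e_1(Q)$. The displayed formula even gives $u_n\le\lambda(0:x)\le -e_1(Q)$ for all $n$ directly, without passing through $H^0_{\m}$. Dropping $u_0\ge 0$ then yields
\[ e_2(I)\ \ge\ \mathrm{g}_s(I)-\mathrm{g}_s(Q)+e_1(Q)\Bigl(1+\binom{m-1}{2}\Bigr). \]
Since $e_1(Q)\le 0$ and $1+\binom{m-1}{2}\le\binom{m}{2}$ for $m\ge 2$ (while $m=1$ forces $e_1(Q)=0$), this gives (i). So the slack between $\binom{m-1}{2}$ and $\binom{m}{2}$ is genuinely consumed. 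The stronger bound $\mathrm{g}_s(I)-\mathrm{g}_s(Q)+e_1(Q)\binom{m-1}{2}$ that your main line produces is not justified.

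\textbf{A smaller point.} ``The same regularity gives $e_1(Q')=e_1(Q)$'' is not a valid reason, since $G(I)$-regularity of $x_1^*$ says nothing about $G(Q)$. The same applies to step (c). You need each $x_i$ to be $R$-regular and superficial for $Q$ as well as for $I$; a general choice of generators of $Q$ arranges this, and the paper makes the same tacit choice. Then Proposition \ref{hibertcoeffsup} applies.
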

 \begin{proof}
 (\ref{newandfinalproofoflowerbound-1}) \normalsize
   Let  $Q=(x_1,\ldots,x_{d})$.  Since  $\depth G(I)
     \geq d-1,$ thus,    $\depth R
     \geq d-1,$ therefore, we can choose   $x_1,\ldots,x_{d-1}$ to be a regular sequence   for $R.$ We set $R_{d-1}=R/(x_1,\ldots x_{d-1}),$ $I_{d-1}=IR_{d-1}$ and $Q_{d-1}=QR_{d-1}.$ From Proposition \ref{hibertcoeffsup}, we have $e_{2}(I)=e_{2}(I_{d-1}).$ Hence
     \begin{align} \label{depthd-1upperbound} \nonumber 
    e_{2}(I)
    &= \displaystyle\sum_{n \geq 1} n\left[\lambda(l_{Q_{d-1},I_{d-1}}(n))
- \lambda(U_{I}(d;n))\right] \text{ (by Lemma \ref{2-II} $\&$ $\depth G(I) \geq d-1$)}\\ \nonumber
  &= \sum_{n \geq 1} n\left[\lambda(l_{Q_{d-1},I_{d-1}}(n))- \lambda(U_{I}(d;n))\right]  + \sum_{n \geq 0}\lambda(l_{Q_{d-1},I_{d-1}}(n))
 -  \sum_{n \geq 0}\lambda(l_{Q_{d-1},I_{d-1}}(n))\\ \nonumber
&= \sum_{n \geq 1} n\left[\lambda(l_{Q_{d-1},I_{d-1}}(n))- \lambda(U_{I}(d;n))\right]\\ \nonumber
&  \quad +\left(e_{1}(I_{d-1})+\displaystyle\sum_{n \geq 0}\lambda(U_{I}(d;n))\right)
-  \sum_{n \geq 0}\lambda(l_{Q_{d-1},I_{d-1}}(n))   \text{ (by Lemma \ref{formulahilbcoeffindimone})} \\ \nonumber
&=  \sum_{n \geq 1} (n-1) \left[\lambda(l_{Q_{d-1},I_{d-1}}(n))
- \lambda(U_{I}(d;n))\right] &\\
&+e_1 (I_{d-1}) 
- \lambda\left(  \frac{I_{d-1}}{Q_{d-1}}\right) 
+ \lambda(U_{I}(d;0))  \\ \nonumber
& \geq e_1(I) - \lambda(I/Q) 
- \sum_{n= 1}^{e_{0}(I)-e_{1}(Q)-1} (n-1)\lambda(U_{I}(d;n)) 
 \text{ (Lemma~\ref{pp} $\&$ Proposition~\ref{hibertcoeffsup})}\\ \nonumber
& =e_1(I) - e_0 (I) + \lambda(R/I) 
-e_{1}(Q)+e_0(Q) - \lambda(R/Q)  - \sum_{n = 1}^{e_{0}(I)-e_{1}(Q)-1}  (n-1)\lambda(U_{I}(d;n))+e_{1}(Q)\\  \nonumber
&= \mathrm{g}_{s}(I)-\mathrm{g}_{s}(Q)-
 \sum_{n = 1}^{e_{0}(I)-e_{1}(Q)-1} (n-1)\lambda(U_{I}(d;n))+e_{1}(Q).
\end{align}

As $\dim R_{d-1}=1$, applying \cite[Lemma 2.3]{rv} to $R_{d-1},$ we get 
\begin{eqnarray} \label{ud}
    \lambda (U_{I}(d;n)) 
    \leq \lambda(H_{\mathfrak \m_{d-1}}^{0}(R_{d-1}))=-e_{1}(Q_{d-1})=-e_{1}(Q) \text{ (from \cite[Proposition 3.1]{mousumi})}. 
 \end{eqnarray}
Therefore, by applying Lemma \ref{pp} on $R_{d-1}$ and and substituting  in Equation (\ref{depthd-1upperbound}), we get 
    $e_{2}(I) \geq \mathrm{g}_{s}(I)-\mathrm{g}_{s}(Q)+e_{1}(Q)\dbinom{e_{0}(I)-e_{1}(Q)}{2}.$

(\ref{newandfinalproofoflowerbound-2})
 If $R$ is Cohen-Macaulay, then  $\mathrm{g}_{s}(Q)=e_{1}(Q)=0$ and hence  $\mathrm{g}_{s}(I)-\mathrm{g}_{s}(Q)+e_{1}(Q)\dbinom{e_{0}(I)-e_{1}(Q)}{2} = e_{1}(I)-e_{0}(I)+\lambda(R/I)$. Substituting in  (\ref{newandfinalproofoflowerbound-1}),  we get the result. 
 \end{proof}

\section{Upper Bounds for the Second Hilbert Coefficient} \label{section4}
In this section, we establish an  upper bound for $e_2(I)$ for an $\m$-primary ideal $I$. 
We begin with an alternate proof of  McCune's result which gives an upper bound for $e_2(Q)$, where $Q$ is a parameter ideal in a local ring of depth at least $d-1$. Recall that the \textit{postulation number} of an ideal $I$ is defined as:
$$n(I)=\min\{n \in \mathbb Z: P_{I}(t)=H_{I}(t) \text{ for } t>n\}.$$

\begin{proposition}
\label{lori}
\cite[Theorem 3.5]{lori}  Let $(R, \m)$ be a Noetherian local ring of dimension $d \geq 2$ and $\depth R \geq d-1.$ Let $Q$ be a parameter ideal. Then the following hold:
\begin{enumerate}[\normalfont(i)]
    \item
    \label{lori-1}
    $e_{2}(Q) \leq 0.$
    \item 
    \label{lori-2}
    $e_{2}(Q) = 0$ if and only if $n(Q) <2-d$ and  $\depth G(Q) \geq d-1.$
    \item 
    \label{lori-3} If $e_{2}(Q)=0$ then $e_{3}(Q)=\cdots=e_{d}(Q)=0.$
\end{enumerate}
\end{proposition}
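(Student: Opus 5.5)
The plan is to reduce to dimension two and read off $e_2(Q)$ from Lemma~\ref{2-II}, using that for $I=Q$ all the terms $l_{Q,Q}(n)=Q^{n+1}/Q^{n+1}$ vanish.

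\emph{Reduction for (i).} By Remark~\ref{remarksup}, choose $Q=(x_1,\ldots,x_d)$ with $x_1,\ldots,x_d$ a superficial sequence and $x_1,\ldots,x_{d-1}$ a regular sequence. Passing from dimension $k\ge 3$ to $k-1$ modulo a superficial non-zero-divisor, Proposition~\ref{hibertcoeffsup}(ii),(iii) shows $e_2$ is unchanged: either $2\le k-2$, or $2=k-1$ and the correction term is $\lambda(0:x)=0$. So we may work in $\bar R=R/(x_1,\ldots,x_{d-2})$, which has dimension $2$ and positive depth. There Lemma~\ref{2-II} with $I=Q$, and hence $l_{Q',Q'}(n)=0$, gives
\begin{equation*}
e_2(Q)=-\sum_{n\ge 1} n\,\lambda(U_{\bar Q}(2;n))-\sum_{n\ge 0}\lambda\bigl((\bar Q^{n+1}:\bar x_{d-1})/\bar Q^{n}\bigr)\le 0,
\end{equation*}
which proves (i).

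\emph{Equality case (ii).} Equality holds if and only if two conditions hold. First, $\bar Q^{n+1}:\bar x_{d-1}=\bar Q^{n}$ for all $n$, i.e.\ $\bar x_{d-1}^{*}$ is regular in $G(\bar Q)$. Second, $U_{\bar Q}(2;n)=0$ for all $n\ge 1$.
\begin{itemize}
\item By Sally's machine, regularity of $\bar x_{d-1}^{*}$ in $G(\bar Q)$ (with $x_1,\ldots,x_{d-2}$ regular in $R$) lifts to $x_1^*,\ldots,x_{d-1}^*$ being a regular sequence in $G(Q)$. Hence $\depth G(Q)\ge d-1$, and conversely depth $\ge d-1$ gives the vanishing of the colon terms.
\item Once $\depth G(Q)\ge d-1$, the Hilbert series of $Q$ is that of the one-dimensional ideal $Q_{d-1}=QR/(x_1,\ldots,x_{d-1})$ divided by $(1-t)^{d-1}$. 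Therefore $n(Q)=n(Q_{d-1})-(d-1)$.
\item In dimension one, the proof of Proposition~\ref{hilbcoeffextension} gives $\Delta[P-H](n+1)=-\lambda(U_Q(d;n))$. Hence $H_{Q_{d-1}}(m)=P_{Q_{d-1}}(m)$ for all $m\ge 1$, i.e.\ $n(Q_{d-1})\le 0<1$, exactly when $U_Q(d;n)=0$ for all $n\ge 1$. This is precisely $n(Q)<2-d$.
\end{itemize}
The main obstacle is bookkeeping: matching the index shifts and making sure the $U$ modules computed in $\bar R$ coincide with $U_Q(d;n)$. I would check this directly from the definition, since $(x_1,\ldots,x_{d-2})$ is already factored out.

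\emph{Part (iii).} If $e_2(Q)=0$, then by (ii) $\depth G(Q)\ge d-1$, so $w_n(Q,Q)=0$ because all the colon terms in its definition are trivial. Also $U_Q(d;n)=0$ for $n\ge 1$. Corollary~\ref{e_i} with $l_{Q,Q}(n)=0$ then gives
\begin{equation*}
e_i(Q)=-\sum_{n\ge i-1}\binom{n}{i-1}\lambda(U_Q(d;n)),
\end{equation*}
and for $i\ge 3$ only indices $n\ge 2$ occur. Hence $e_i(Q)=0$ for $3\le i\le d$.
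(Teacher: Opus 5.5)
Your proof is correct and is essentially the paper's argument: (i) is the same reduction to dimension two followed by Lemma~\ref{2-II} with $l_{Q',Q'}(n)=0$. Parts (ii)--(iii) use the same ingredients, namely Sally's machine, the unit shift of the postulation number along a $G(Q)$-regular superficial sequence (Marley's Lemma 2.8 iterated), and Corollary~\ref{e_i} with $w_n(Q,Q)=0$ and $U_Q(d;n)=0$ for $n\ge 1$. The differences are organizational: you pass directly to $R/(x_1,\ldots,x_{d-1})$ and characterize $n(Q_{d-1})\le 0$ by telescoping rather than inducting on $d$ from an explicit two-dimensional Hilbert polynomial, and in (iii) you read off all $e_i(Q)$, $3\le i\le d$, from Corollary~\ref{e_i} at once rather than treating only $e_d(Q)$ after induction.
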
 
\begin{proof}
Let $Q=(x_{1},\ldots,x_{d}),$ where $x_{1},\ldots,x_{d}$ is a system of parameters in $R.$ By Remark \ref{remarksup}, we can choose $x_{1} $  a superficial element of $Q$ which is also a non-zero divisor. Set $R'=R/(x_{1})$ and $Q'=QR'.$

(\ref{lori-1})
We prove by induction on dimension $d$. Let   $d=2$. Since $Q'$ is a parameter ideal in $R',$  $\lambda(l_{Q',Q'}(n))=0$ for all $n\geq 1$ and hence from  Lemma~\ref{2-II}, we get
\begin{align}
 \label{e2-para} 
       e_{2}(Q)
&= - \displaystyle\sum_{n \geq 1} n\lambda(U_{Q}(2;n))
-  \displaystyle \sum_{n\geq 0}\lambda((Q^{n+1}:x_{1})/Q^{n})
\leq
 0 
    \end{align}
If $d\geq 3$, then from Proposition \ref{hibertcoeffsup} and induction hypothesis  we get $e_{2}(Q)=e_{2}(Q') \leq 0.$ 

(\ref{lori-2})
Let $d=2$. If   equality holds in  (\ref{e2-para}), then $(Q^{n+1}:x_1) = Q^n$  for all $n \geq 0$ and hence 
$x_{1}^{*}$ is a non zero-divisor of $G(Q)$   which implies that  $\depth G(Q)\geq 1.$   Moreover, $e_0(Q) = e_0(Q^{\prime})$ and $\lambda 
(U_{Q} (2;n)) = 0$ for all $n \geq 1$. Hence, from Proposition \ref{hibertcoeffsup} (v) and \cite[2.4, page 21]
{rv},  the Hilbert Series of $G(I)$ is 
  \begin{eqnarray*}
   HS_{Q}(t) 
   = \frac{HS_{Q'}(t)} {1-t} =\frac{e_0(Q) + U_{Q}(2;0) - U_{Q}(2;0) t}{(1-t)^2}
     \end{eqnarray*}
 Therefore, the Hilbert-Samuel polynomial is
 \begin{eqnarray*}
  P_{Q}(n)=e_{0}(Q)\binom{n+1}{2}+U_{Q}(2;0)n.
 \end{eqnarray*}
Thus, $P(n) = H(n)$ for all $n \geq 0$ which implies that  $n(Q)<0.$ 
  Conversely, suppose $n(Q)<0$, then $e_{2}(Q)= P(0) = H(0)=0 $.

Now assume that  $d \geq 3$ and that our assertion holds for $d-1.$ If  $e_{2}(Q)=0,$ then $e_{2}(Q')=e_2(Q) =0$. 
 Hence, by  induction hypothesis, $n(Q') < 2-(d-1)$ and  $\depth G(Q') \geq d-2$. 
From \cite[Lemma 2.8]{marley}, we have $n(Q)+1=n(Q'),$ therefore, $n(Q)<2-d,$  and from Sally's descent we get $\depth G(Q) \geq d-1$. 

 Conversely, suppose that $n(Q)<2-d$ and $\depth G(Q) \geq d-1.$ Then from \cite[Lemma 2.8]{marley} $n(Q')=n(Q)+1<2-(d-1)$. From Sally's descent, we have  and $\depth G(Q')\geq d-2$. Hence by induction hypothesis, $e_{2}(Q')=0.$ From Proposition \ref{hibertcoeffsup}, $e_{2}(Q)=e_{2}(Q')=0.$
 
(\ref{lori-3}) If $d=2$, the the result trivially holds true. Let $d \geq 3$. Then by induction hypothesis 
 $e_{3}(Q')=\cdots=e_{d-1}(Q')=0$. From Proposition~\ref{hibertcoeffsup} (ii) $e_{3}(Q)=\cdots=e_{d-2}(Q)=0$ and from  Proposition~\ref{hibertcoeffsup} (iii) $e_{d-1}(Q)=0$.   
It remains to show that $e_d(Q)=0$. 
  Since $\depth G(Q) \geq d-1,$  from the proof of \cite[Corollary 2.6]{sam}, we get $w_{n}(Q,Q)=0$ for all $n \geq 1.$ Note that since $e_{2}(Q')=0$  from (\ref{e2-para}) we get that  $U_{Q'}(d-1;n)=0$ for all $n\geq 1$. Hence, $U_{Q}(d;n)=0$ for all $n\geq 1.$  Therefore, from Corollary \ref{e_i}, $e_{d}(Q) =0.$ 
\end{proof} 

From \cite[Example 3.7]{lori}, it is evident that the assumption $\depth R \geq d-1$ is essential for the non-positivity of $e_2(Q)$. 

We now give an  improvement of McCune's result. In fact we obtain upper bound for $e_2(Q)$ under the assumption that  $\depth R \geq d-2,$ where $Q$ is a parameter ideal. 
\begin{proposition} 
\label{parameterbounddepthzero}
     Let $(R,\m)$ be a Noetherian local ring of dimension $d \geq 2$ with $\depth R \geq d-2$ and    let $Q$ be a parameter ideal in $R$. Then    $e_2(Q) \leq \lambda\left(H_{\m_{d-2}}^{0}(R_{d-2})\right)$, where $\underline{x}=x_{1},\dots,x_{d-2}$ is a regular sequence in $R,$ and $R_{d-2}=R/(\underline{x})$ and $\m_{d-2}=\m R_{d-2}.$   
\end{proposition}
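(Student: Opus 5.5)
Since $x_1,\ldots,x_{d-2}$ is a regular sequence in $R$, the plan is to reduce to dimension two and then redo the two-dimensional computation of Lemma~\ref{2-II}, this time without a non-zero divisor. We may take $x_1,\dots,x_{d-2}$ to be part of a superficial sequence generating $Q$; by Remark~\ref{remarksup} we can choose $Q=(x_1,\ldots,x_d)$ with $x_1,\ldots,x_d$ superficial for $Q$ and $x_1,\ldots,x_{d-2}$ regular. Each $x_i$ with $i\le d-2$ is a non-zero divisor modulo the previous ones. Hence Proposition~\ref{hibertcoeffsup}(ii) (for $d\ge 4$), together with parts (iii) and (iv) with $\lambda(0:x)=0$ and $(I^{n+1}:x)=I^n$ (superficial and regular), gives $e_2(Q)=e_2(Q_{d-2})$, where $Q_{d-2}=QR_{d-2}$ is a parameter ideal in the two-dimensional ring $R_{d-2}$. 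In fact Proposition~\ref{hibertcoeffsup}(ii) only needs $i\le d-2$, so for $d\ge 4$ the descent is clear. For $d=3$ we need $e_2$ to be preserved when passing to $R/(x_1)$, which is the $e_{d-1}$ case of part (iii); that case adds $\lambda(0:x_1)=0$. So we are reduced to $d=2$, with $R$ replaced by $S=R_{d-2}$, $\n=\m_{d-2}$, and $\depth S\ge 0$.

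In dimension two, let $Q=(y_1,y_2)$ with $y_1$ superficial, and set $S'=S/(y_1)$. Then Proposition~\ref{hibertcoeffsup}(iv) gives, for $n\gg0$,
\[
e_2(Q)=e_2(QS')-\Big[\sum_{i=0}^{n}\lambda\big((Q^{i+1}:y_1)/Q^{i}\big)-(n+1)\lambda(0:y_1)\Big].
\]
We estimate the two pieces separately. First, $QS'$ is a parameter ideal in the one-dimensional ring $S'$, so $\lambda(l_{QS',QS'}(n))=0$. Lemma~\ref{formulahilbcoeffindimone} then gives $e_2(QS')=-\sum_{n\ge1} n\lambda((0:y_2)\cap Q^nS')\le 0$. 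Second, since $Q^{i+1}:y_1\supseteq Q^{i}+(0:y_1)$, we get
\[
\lambda\big((Q^{i+1}:y_1)/Q^{i}\big)\ge \lambda\big((Q^i+(0:y_1))/Q^i\big)=\lambda(0:y_1)-\lambda\big((0:y_1)\cap Q^i\big).
\]
Summing over $i$, the bracket is at least $-\sum_{i\ge0}\lambda((0:y_1)\cap Q^i)$. This yields
\[
e_2(Q)\le \sum_{i\ge 0}\lambda\big((0:y_1)\cap Q^i\big).
\]
Here $(0:y_1)\cap Q^i=0$ for $i\gg0$ by \cite[Theorem~1.2(6)]{rv}.

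The main obstacle is bounding this sum by $\lambda(H^0_{\n}(S))$ rather than by a multiple of it. Each term is at most $\lambda(0:y_1)\le\lambda(H^0_\n(S))$ (\cite[Lemma~2.3]{rv}), but the number of nonzero terms is not obviously one. My first attempt is to choose $y_1$ sufficiently general, for instance as a superficial element for $Q$ on both $S$ and $S/H^0_\n(S)$. Then $H^0_\n(S)\cap Q^i=0$ for $i\ge 1$ by Artin–Rees, since $H^0_\n(S)$ has finite length and $Q^i\subseteq \n^i$ with $\n^iH^0_\n(S)=0$ for large $i$. The difficulty is that this vanishing holds only for large $i$, not for $i\ge1$. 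So I would instead refine the estimate. The sum $\sum_i\lambda((0:y_1)\cap Q^i)$ should be compared with the terms $-\sum_{n\ge1} n\lambda((0:y_2)\cap Q^nS')$ that were discarded from $e_2(QS')$, together with the excess terms $\lambda((Q^{i+1}:y_1)/(Q^i+(0:y_1)))$. Only the $i=0$ contribution, which is $\lambda(0:y_1)\le\lambda(H^0_\n(S))$, should survive, and the contributions for $i\ge1$ should be absorbed by those discarded terms. If that bookkeeping fails, the fallback is to use the exact sequence $0\to H^0_\n(S)\to S\to \bar S\to0$. Here $\bar S$ has positive depth, so Proposition~\ref{lori} gives $e_2(Q\bar S)\le 0$. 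The Hilbert polynomials of $Q$ and $Q\bar S$ differ by the function $n\mapsto\lambda(H^0_\n(S)/(Q^n\cap H^0_\n(S)))$, which eventually equals $\lambda(H^0_\n(S))$. This gives $e_2(Q)=e_2(Q\bar S)+\lambda(H^0_\n(S))\le\lambda(H^0_\n(S))$ directly. This last route looks cleanest, and I would present it as the main argument.
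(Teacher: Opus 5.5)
Your final argument is correct and is essentially the paper's proof. You descend along the regular superficial elements $x_1,\dots,x_{d-2}$ to the two-dimensional ring $S=R_{d-2}$, then pass to $\bar S=S/H^0_{\mathfrak{n}}(S)$, where $e_2(Q)=e_2(Q\bar S)+\lambda(H^0_{\mathfrak{n}}(S))$ and $e_2(Q\bar S)\le 0$ by Proposition~\ref{lori}. Two small differences are worth noting: you derive the constant-term shift directly from the Hilbert functions instead of quoting Rossi--Valla, and your use of Proposition~\ref{hibertcoeffsup}(iii) with $\lambda(0:x_1)=0$ for the step out of dimension three is more careful than the paper's citation of part (ii) alone. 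The exploratory first attempt via $\sum_i\lambda((0:y_1)\cap Q^i)$ is unnecessary and can be dropped.
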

\begin{proof} We proceed by induction on the dimension $d.$ Suppose $d=2.$
If $\depth R = 1$, then $H_{\m}^{0}(R)=0$ and the result follows from Proposition~\ref{lori} (\ref{lori-1}). If $\depth R = 0$, we set $\overline{R}=R/\left(H_{\m}^{0}(R)\right)$ and $\overline{Q}=Q\overline{R}$. By \cite[Proposition 2.3]{rv}
 $ e_{2}(Q)
        =e_{2}(\overline{Q})+\lambda\left(H_{\m}^{0}(R)\right) $. 
   Since    $\depth \overline{R} \geq 1$, by  Proposition~\ref{lori}(\ref{lori-1}), $e_{2}(\overline{Q})\leq0$ which implies that  $ e_{2}(Q) \leq \lambda\left(H_{\m}^{0}(R)\right)$. 

 Suppose $d \geq 3 $ and $\depth R \geq d-2.$ Assume that our assertion is true for $d-1.$ Put $Q_{d-2} = Q R_{d-2}$.  Since $\underline{x}$ is a regular sequence in $R,$ from Proposition \ref{hibertcoeffsup} (ii), we have $e_{2}(Q)=e_{2}(Q_{d-2}).$  Now from the induction hypothesis, we have $e_{2}(Q_{d-2}) \leq \lambda \left(H_{\m_{d-2}}^{0}(R_{d-2})\right).$ Therefore,  $e_{2}(Q) \leq \lambda \left(H_{\m_{d-2}}^{0}(R_{d-2})\right).$
\end{proof}

In a Buchsbaum local ring, for any parameter ideal $Q$, from \cite[Corollary 4.2]{trung3}, the description of all the Hilbert coefficients is given explicitly as follows: 
$$e_{i}(Q)=(-1)^i\displaystyle\sum_{j=0}^{d-i}\binom{d-i-1}{j-1}\lambda\left(H_{\mathfrak m}^{j}(R)\right) \text{ for all $i=1,\ldots,d$},$$
where $\binom{d-i-1}{-1}=0$ if $i \neq d$ and $\binom{-1}{-1}=1.$
In particular, if $R$ is a two-dimensional Buchsbaum local ring with $\depth R = 0$, then for every parameter ideal $Q$ one has $e_{2}(Q)=\lambda\left(H_{\mathfrak m}^{0}(R)\right)$. Thus, in the two-dimensional Buchsbaum case, $e_{2}(Q)$ attains the bound  in Proposition \ref{parameterbounddepthzero}. However, in the following example, we show that the upper bound  on $e_{2}(Q)$ obtained in Proposition \ref{parameterbounddepthzero} is optimal when the ring is not Buchsbaum.
\begin{example}
\label{ex vanishing e_2}
    Let  $S =\mathbb Q[[X,Y,Z]]$ where $X,Y,Z$ are variables,   $\mathfrak n = (X,Y,Z)$ and $K =
    (X^2) \cap (X^3,Y^3,Z^4)$. 
     Put  $R=S/K= \mathbb Q[[x,y,z]]$ and $\m=(x,y,z)$,  where $x,y,z$ denote the images of $X,Y,Z$ in $R.$  Note that $\dim R=2$  and  $\depth R=0.$  Since  $(K : \n^n) = (X^2)$, for all $n\geq 6,$  
    $$H_{\m}^{0}(R)\cong (x^2)
    = \dfrac{(X^2)}{ (X^3,X^2Y^3,X^2Z^4) }  
    \cong \dfrac{S}{ (X,Y^3,Z^4)},$$
    which gives $\lambda\left(H_{\m}^{0}(R)\right)=12.$

    Let $Q=(y,z)$  be a parameter ideal of $R.$  Then 
 \begin{align*}
\lambda \left( \frac{R}{Q^n}\right)
&= \lambda \left( \frac{S}{ K + Q^n}\right) & \\
&= \lambda \left( \frac{S}{  (X) + K + Q^n}\right) + \lambda \left( \frac{S}{   (K + Q^n) : (X)}\right) & \\
&= \lambda \left( \frac{S}{  (X) + (Y,Z)^n}\right) + \lambda \left( \frac{S}{   ( X^2, XY^3, XZ^4) + Q^n}\right) & \\
&= 2 \lambda \left( \frac{S}{  (X) + (Y,Z)^n}\right) + \lambda \left( \frac{S}{   ( X, Y^3, Z^4 ) + Q^n}\right) & \\
&= 2\binom{n+1}{2}+12  \mbox{ for all } n \geq 7.
 \end{align*}  
 Hence,
       $e_{2}(Q) = 12= \lambda\left(H_{\m}^{0}(R)\right)$, but $R$ is not Buchsbaum since $\m H^{0}_{\m}(R) \neq 0$.  
\end{example}

Before we establish the upper bound for the second Hilbert coefficient of any $\m$-primary ideal $I$, we prove some preliminary results
 on the sectional genera $\mathrm{g}_s(I).$ For a parameter ideal $Q$, in 
 \cite[Lemma 3.2]{go}  Goto and Ozeki discussed the behavior of $\mathrm{g}_s(Q)$  going modulo a superficial element. We extend this result to any  $\m$-primary ideal $I$. 
Though the same proof goes through, we prove it as it is used in our main result.

\begin{lemma} \label{lemmaa1}
     Let $(R,\m)$ be a Noetherian local ring of dimension $d \geq 2$ and let $I$ be an $\m$-primary ideal with minimal reduction $Q$. Let $x \in Q$ be a superficial element of both $Q$ and $I.$ Then  
\[\mathrm{g}_{s}(I)= \begin{cases}
    \mathrm{g}_{s}(I/(x))+\lambda(0:_{R}x), \text{ if } d=2,\\
    \mathrm{g}_{s}(I/(x)), \text{ if } d\geq3.
\end{cases}\]
\end{lemma}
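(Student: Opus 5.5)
Write $R'=R/(x)$ and $I'=IR'$. Then $\dim R'=d-1\ge 1$, and $\lambda(R/I)=\lambda(R'/I')$ because $x\in Q\subseteq I$. We also have $e_0(I')=e_0(I)$, since $d-1\ge 1$ and $0\le d-2$ in Proposition \ref{hibertcoeffsup}(\ref{hibertcoeffsup-2}). Hence
\[
\mathrm{g}_s(I)-\mathrm{g}_s(I')=e_1(I)-e_1(I').
\]
So the whole statement reduces to comparing $e_1(I)$ with $e_1(I')$. This uses only that $x$ is superficial for $I$, and that $Q$ is a reduction of $I$ so that $I'$ is still $\m R'$-primary with the expected dimension.

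There are two cases.

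If $d\ge 3$, then $1\le d-2$, and Proposition \ref{hibertcoeffsup}(\ref{hibertcoeffsup-2}) gives $e_1(I')=e_1(I)$ directly. Therefore $\mathrm{g}_s(I)=\mathrm{g}_s(I')$.

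If $d=2$, then $1=d-1$, and Proposition \ref{hibertcoeffsup}(\ref{hibertcoeffsup-3}) gives
\[
e_1(I')=e_1(I)+(-1)^{1}\lambda(0:_R x)=e_1(I)-\lambda(0:_R x).
\]
Substituting into the displayed identity yields $\mathrm{g}_s(I)=\mathrm{g}_s(I')+\lambda(0:_R x)$, which is the claim.

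The only point needing care is that $\lambda(0:_R x)$ is finite, so that Proposition \ref{hibertcoeffsup} applies and the formula makes sense. This holds because a superficial element satisfies $(0:x)\cap I^c=0$ for large $c$ (as recalled in the proof of Lemma \ref{pp}). The hypothesis that $x$ is also superficial for $Q$ is not needed for this identity. It is presumably included so that the same $x$ can be used to compare $\mathrm{g}_s(Q)$ with $\mathrm{g}_s(Q/(x))$ simultaneously later, and the identical argument applied to $Q$ gives that companion formula.
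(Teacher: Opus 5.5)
Your proposal is correct and matches the paper's proof: you observe that $\lambda(R'/I')=\lambda(R/I)$ and $e_0(I')=e_0(I)$, and then compare $e_1$ using Proposition \ref{hibertcoeffsup}(\ref{hibertcoeffsup-2}) for $d\ge 3$ and (\ref{hibertcoeffsup-3}) for $d=2$. Your added check that $\lambda(0:_R x)<\infty$ is valid: $(0:_R x)\cap I^c=0$ makes $(0:_R x)$ embed in $R/I^c$, which has finite length.
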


\begin{proof}
   Set $R'=R/(x)$, $I'=IR'$ and $Q'=QR'.$  Let  $d=2$. Then 
  \begin{align*}
     \mathrm{g}_{s}(I/(x)) 
&=\lambda(R'/I')-e_{0}(I')+e_{1}(I')&\\
 &= \lambda(R/I)-e_{0}(I)+e_{1}(I)-\lambda(0:_{R}x)   \mbox{ (by Proposition \ref{hibertcoeffsup})} \\
 &=\mathrm{g}_{s}(I)-\lambda(0:_{R}x).
  \end{align*}
    For $d \geq 3,$ from Proposition \ref{hibertcoeffsup},  we have $e_{0}(I)=e_{0}(I')$ and  $e_{1}(I)=e_{1}(I')$. Therefore, $\mathrm{g}_{s}(I')=\mathrm{g}_{s}(I).$
\end{proof}

\begin{lemma} 
 \label{2-prime}
Let $(R,\m)$ be a  Noetherian local ring of dimension $d$ with $\depth R \geq d-1$ and let  $I$ be an $\m$-primary ideal. Let $x_{1}, \dots ,x_{d} $ be a superficial sequence of $I$ such that $Q=(x_{1},\ldots,x_{d})$ is a minimal reduction of $I$. Assume that $x_1,\ldots,x_{d-1}$ is a regular sequence in $R$ and    set $R_{d-1}=R/(x_{1},\ldots,x_{d-1})$, $I_{d-1}=IR_{d-1}$ and $Q_{d-1}=QR_{d-1}.$ Then the following hold:
\begin{enumerate} [\normalfont(i)]
    \item  \label{2-prime-1} $\displaystyle \sum_{n \geq 1}\lambda( l_{Q_{d-1},I_{d-1}}(n))
  \leq \mathrm{g}_{s}(I)-\mathrm{g}_{s}(Q)-(e_{0}(I)-e_{1}(Q))e_{1}(Q)$. Further, equality holds if and only if $R$ is Cohen-Macaulay. 
  \item \label{2-prime-2} If $\depth G(I) \geq d-1,$ then $r_{Q}(I) \leq \mathrm{g}_{s}(I)-\mathrm{g}_{s}(Q)-(e_{0}(I)-e_{1}(Q))e_{1}(Q)+1.$
\end{enumerate}
 \end{lemma}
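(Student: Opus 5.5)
The plan for both parts is to pass to the one-dimensional ring $R_{d-1}$ and use the one-dimensional formula for $e_1$ from Lemma \ref{formulahilbcoeffindimone}.

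\textbf{Part (\ref{2-prime-1}).} Since $x_1,\ldots,x_{d-1}$ is regular, iterating Proposition \ref{hibertcoeffsup}(ii),(iii) (with $\lambda(0:x_i)=0$ at each step) gives $e_0(I_{d-1})=e_0(I)$, $e_1(I_{d-1})=e_1(I)$, and likewise for $Q$. Also $\lambda(R_{d-1}/I_{d-1})=\lambda(R/I)$ and $\lambda(R_{d-1}/Q_{d-1})=\lambda(R/Q)$. We have $(0:_{R_{d-1}}x_d)\cap I_{d-1}^n\cong U_I(d;n)$. Lemma \ref{formulahilbcoeffindimone} with $k=1$, applied in $R_{d-1}$, then reads
\[e_1(I)=\sum_{n\ge 0}\lambda(l_{Q_{d-1},I_{d-1}}(n))-\sum_{n\ge 0}\lambda(U_I(d;n)).\]
Isolating the $n=0$ term $\lambda(I/Q)=\lambda(R/Q)-\lambda(R/I)$ and using $e_0(Q)=e_0(I)$, this becomes
\[\sum_{n\ge1}\lambda(l_{Q_{d-1},I_{d-1}}(n))=\mathrm{g}_s(I)-\mathrm{g}_s(Q)+e_1(Q)+\sum_{n\ge0}\lambda(U_I(d;n)).\]
Lemma \ref{pp}(\ref{pp-2}) on $R_{d-1}$, together with the identity $\lambda(H^0_{\m_{d-1}}(R_{d-1}))=-e_1(Q_{d-1})=-e_1(Q)$ from (\ref{ud}), bounds the last sum by $-(e_0(I)-e_1(Q))e_1(Q)$. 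Since $e_1(Q)\le 0$, dropping the $+e_1(Q)$ term yields the asserted inequality.

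For the equality case, equality forces $e_1(Q)=0$. Then $H^0_{\m_{d-1}}(R_{d-1})=0$, so $R_{d-1}$ is Cohen--Macaulay, and hence $R$ is Cohen--Macaulay because $x_1,\ldots,x_{d-1}$ is regular. Conversely, if $R$ is Cohen--Macaulay, then $e_1(Q)=\mathrm{g}_s(Q)=0$ and every $U_I(d;n)=0$, so all inequalities above are equalities.

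\textbf{Part (\ref{2-prime-2}).} Since $\depth G(I)\ge d-1$ and $x_1,\ldots,x_{d-1}$ is a superficial sequence, the initial forms $x_1^*,\ldots,x_{d-1}^*$ form a regular sequence in $G(I)$ (Sally's descent). By Valabrega--Valla this gives $I^{n+1}\cap(x_1,\ldots,x_{d-1})=(x_1,\ldots,x_{d-1})I^n$ for all $n$. From this I would deduce that $I^{n+1}=QI^n$ if and only if $I_{d-1}^{n+1}=Q_{d-1}I_{d-1}^n$, so $r_Q(I)=r_{Q_{d-1}}(I_{d-1})=:r$.

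In the one-dimensional ring, $\lambda(l_{Q_{d-1},I_{d-1}}(n))>0$ for $1\le n\le r-1$: once $l(n)=0$ it stays zero, the condition noted after Lemma \ref{ozekinumericallemma}. Hence $r-1\le\sum_{n\ge1}\lambda(l_{Q_{d-1},I_{d-1}}(n))$, and part (\ref{2-prime-1}) gives the bound.

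The main obstacle I expect is the transfer of the reduction number through the regular sequence $x_1^*,\ldots,x_{d-1}^*$. Specifically, one needs the implication $I_{d-1}^{n+1}=Q_{d-1}I_{d-1}^n\Rightarrow I^{n+1}=QI^n$, which requires the Valabrega--Valla intersection property. I would verify it by induction on the number of elements, one $x_i$ at a time.
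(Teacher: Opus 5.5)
Your proposal is correct and follows the paper's proof. In (i) you obtain $\sum_{n\ge1}\lambda(l_{Q_{d-1},I_{d-1}}(n))$ from the one-dimensional formula for $e_1(I_{d-1})$, bound $\sum_{n\ge0}\lambda(U_I(d;n))$ via Lemma \ref{pp} and (\ref{ud}), and discard $e_1(Q)=-\sum_{n\ge0}\lambda(U_Q(d;n))\le 0$; this is exactly the term the paper drops, and the paper's equality argument via $U_Q(d;0)=(0:_{R_{d-1}}x_d)=0$ is your $e_1(Q)=0$ argument. Part (ii) is the same transfer through $I^{n+1}\cap(x_1,\ldots,x_{d-1})=(x_1,\ldots,x_{d-1})I^n$, and the step you flag as the main obstacle is immediate with no induction: if $I^{n+1}\subseteq QI^n+(x_1,\ldots,x_{d-1})$, the modular law gives $I^{n+1}=QI^n+I^{n+1}\cap(x_1,\ldots,x_{d-1})=QI^n$.
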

\begin{proof}  
(\ref{2-prime-1}) Note that $l_{Q_{d-1},Q_{d-1}}(n) = 0$ for all $n \geq 0$ and $e_0(I_{d-1}) = e_0(Q_{d-1})$. Hence  from Lemma \ref{formulahilbcoeffindimone}, we have $e_{1}(I_{d-1})=\displaystyle\sum_{n \geq 0}[\lambda(l_{Q_{d-1},I_{d-1}}(n))-\lambda(U_{I}(d;n))]$. Therefore
\begin{align} \nonumber \label{cmequality6} \nonumber
    \sum_{n \geq 1} \lambda (l_{Q_{d-1},I_{d-1}}(n))
&=\sum_{n \geq 1} \lambda (l_{Q_{d-1},I_{d-1}}(n)) 
- \sum_{n \geq 0} \lambda (l_{Q_{d-1},Q_{d-1}}(n)) -e_{0}(I_{d-1})+e_{0}(Q_{d-1})  \\ \nonumber
&=\sum_{n \geq 0} \lambda (l_{Q_{d-1},I_{d-1}}(n)) - \lambda(I_{d-1}/Q_{d-1}) 
- \sum_{n \geq 0} \lambda (l_{Q_{d-1},Q_{d-1}}(n)) -e_{0}(I_{d-1})+e_{0}(Q_{d-1}) \\ \nonumber
  &=e_{1}(I_{d-1})+\sum_{n\geq 0}\lambda(U_{I}(d;n))-\lambda(I/Q)-e_{1}(Q_{d-1})-\sum_{n\geq 0}\lambda(U_{Q}(d;n))\\ \nonumber
  & \quad -e_{0}(I_{d-1})+e_{0}(Q_{d-1})  \\ 
 &\leq e_{1}(I_{d-1})-e_{0}(I_{d-1})+\lambda(R/I)-e_{1}(Q_{d-1})+e_{0}(Q_{d-1}) -\lambda(R/Q))+\sum_{n\geq 0}\lambda(U_{I}(d;n))  \\ \nonumber
    &= \mathrm{g}_{s}(I_{d-1})-\mathrm{g}_{s}(Q_{d-1})+\sum_{n\geq 0}\lambda(U_{I}(d;n))&\\ \nonumber
    &= \mathrm{g}_{s}(I)-\mathrm{g}_{s}(Q)+\sum_{n\geq 0}\lambda(U_{I}(d;n))   \text{ (by Lemma \ref{lemmaa1})}\\ \nonumber
    &=\mathrm{g}_{s}(I)-\mathrm{g}_{s}(Q)+(e_{0}(I_{d-1})
    -e_{1}(Q_{d-1}))\lambda(H^{0}_{\m_{d-1}}(R_{d-1})) \\ \nonumber 
    & \quad \text{ (applying Lemma \ref{pp} on $R_{d-1}$)}\\ \nonumber
    & =\mathrm{g}_{s}(I)-\mathrm{g}_{s}(Q)-(e_{0}(_{d-1})-e_{1}(Q_{d-1}))e_{1}(Q_{d-1}) 
    \mbox { (by \cite[Proposition 3.1]{mousumi})}\\  \nonumber
     & = \mathrm{g}_{s}(I)-\mathrm{g}_{s}(Q)-(e_{0}(I)-e_{1}(Q))e_{1}(Q)  
     \mbox{\text{ (by Proposition \ref{hibertcoeffsup})}}.
\end{align}

\normalsize
Suppose the equality holds, then from (\ref{cmequality6}) we get $\displaystyle \sum_{n \geq 0}\lambda(U_{Q}(d;n))=0,$ which implies $U_{Q}(d;n)=0$ for all $n \geq 0.$  In particular, for $n=0,$ we get $(0:_{R_{d-1}}x_{d})=0$, this implies $x_{d}$ is a non zero-divisor of $R_{d-1}$. Thus, $R$ is Cohen-Macaulay. The converse follows  from the fact that $x_{d}$ is a non zero-divisor of $R_{d-1}$ then $(0:_{R_{d-1}}x_{2})=0$, this implies   $U_{Q}(d;n)=0$ for all $n \geq 0$ and equality holds.

(\ref{2-prime-2}) Since $\depth G(I) \geq d-1,$ thus for all $n \geq 0$, $I^{n+1}\cap (\underline{x})=(\underline{x})I^{n}$, where $\underline{x}^{*}=x_{1}^{*},\ldots,x_{d-1}^{*}$ is a regular sequence in $G(I),$ and hence,   we have  
\begin{equation} \label{iso-length1}
        \frac{I^{n+1}+(\underline{x})}{QI^{n}+(\underline{x})} 
 \cong \frac{I^{n+1}}{I^{n+1} \cap [QI^{n}+(\underline{x})]} 
 \cong \frac{I^{n+1}}{QI^{n}+[I^{n+1}\cap(\underline{x})]}
 \cong l_{Q,I}(n).
\end{equation}
Therefore, from (\ref{2-prime-1}), we get $$\displaystyle \sum_{n \geq 1}\lambda( l_{Q,I}(n))=\displaystyle \sum_{n \geq 1}\lambda( l_{Q_{d-1},I_{d-1}}(n))\leq  \mathrm{g}_{s}(I_{d-1})-\mathrm{g}_{s}(Q_{d-1})-(e_{0}(I_{d-1})-e_{1}(Q_{d-1}))e_{1}(Q_{d-1}).$$
From Proposition \ref{hibertcoeffsup},  $e_{0}(I) = e_{0}(I_{d-1})$ and $e_{1}(Q) = e_{1}(Q_{d-1})$. By Lemma \ref{lemmaa1}, $\mathrm{g}_s(I) = \mathrm{g}_s(I_{d-1})$ and $\mathrm{g}_s(Q) = \mathrm{g}_s(Q_{d-1})$. Thus, $\displaystyle \sum_{n \geq 1}\lambda( l_{Q,I}(n))\leq \mathrm{g}_{s}(I)-\mathrm{g}_{s}(Q)-(e_{0}(I)-e_{1}(Q))e_{1}(Q).$ Further, $r_{Q}(I)-1\leq\displaystyle \sum_{n \geq 1}\lambda( l_{Q,I}(n)),$ therefore, $r_{Q}(I)\leq \mathrm{g}_{s}(I)-\mathrm{g}_{s}(Q)-(e_{0}(I)-e_{1}(Q))e_{1}(Q)+1.$
\end{proof}

We now prove the main result of this section. We establish an upper bound for $e_{2}(I)$ in terms of sectional genera  $\mathrm{g}_{s}(I)$ and $\mathrm{g}_{s}(Q)$ of ideals $I$ and $Q$ respectively. We also analyze the case when equality holds, highlighting its implications for the depth of the associated graded ring and the reduction number of $I.$  This extends Ozeki's result \cite[Theorem 3.4]{ozeki} to  non Cohen-Macaulay local rings.

\begin{theorem} \label{finalupperbounde2}
    Let $(R,\m)$ be a Noetherian local ring of dimension $d\geq 2$ and  $\depth R \geq d-1.$  Then  for any $\m$-primary ideal  $I$   and a  minimal reduction $Q=(x_1,\ldots,x_{d})$ of $I,$ 
     \begin{equation} \label{first bound e_2 equalityyy}
         e_2(I) \leq \displaystyle\binom{\mathrm{g}_s(I) - \mathrm{g}_s(Q) - (e_{0}(I)-e_{1}(Q))e_1(Q) + 1}{2}-\displaystyle \sum_{n=1}^{e_{0}(I)-e_{1}(Q)-1}
           n\lambda(U_{I}(d;n)).
     \end{equation}
 Further, suppose equality holds in  (\ref{first bound e_2 equalityyy}), then the following are true: 
 \begin{enumerate}[\normalfont(i)]
     \item \label{depthh1}   $\depth G(I) \geq d-1.$
     \item  \label{depthh2}  If $I$ is not a parameter ideal  then 
          $r_{Q}(I)=\mathrm{g}_s(I) - \mathrm{g}_s(Q) - (e_{0}(I)-e_{1}(Q))e_1(Q)+1.$
          \item \label{depthh3} Set $t=\mathrm{g}_{s}(I)-\mathrm{g}_s(Q) - (e_{0}(I)-e_{1}(Q))e_1(Q).$ For $3 \leq i \leq d,$ we have 
   \[\displaystyle\binom{t + 1}{i}+e_{1}(Q)\binom{e_{0}(I)-e_{1}(Q)}{i} \leq e_{i}(I) \leq\displaystyle\binom{t + 1}{i}.\]
 \end{enumerate}
  
\end{theorem}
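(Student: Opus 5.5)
Reduce to dimension one modulo a regular superficial sequence, bound the resulting sum by Lemma \ref{ozekinumericallemma}, and read off the equality case.

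\emph{Setup.} By Remark \ref{remarksup}, choose $Q=(x_1,\ldots,x_d)$ generated by a superficial sequence with $x_1,\ldots,x_{d-1}$ regular in $R$. Put $R_{d-1}=R/(x_1,\ldots,x_{d-1})$, $I_{d-1}=IR_{d-1}$, $Q_{d-1}=QR_{d-1}$. Since $x_1,\ldots,x_{d-1}$ is regular, Proposition \ref{hibertcoeffsup}(ii),(iv) gives, at each step of going modulo $x_j$, a correction term $-\sum_{n\ge 0}\lambda((I^{n+1}:x_j)/I^n)$ (computed in the current quotient) when that step lands in dimension two. Iterating as in Lemma \ref{2-II}, I expect
\[
e_2(I)\le e_2(I_{d-1})=\sum_{n\ge1} n\bigl[\lambda(l_{Q_{d-1},I_{d-1}}(n))-\lambda(U_I(d;n))\bigr],
\]
using Lemma \ref{formulahilbcoeffindimone} in dimension one. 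The inequality is an equality exactly when every correction term vanishes. Those correction terms are lengths of the form $(I^{n+1}:x_j)/I^n$, and they all vanish iff $x_1^*,\ldots,x_{d-1}^*$ is a regular sequence in $G(I)$ (Proposition \ref{hibertcoeffsup}(v) together with Sally's descent). For $d\ge3$ one needs care, since $e_2$ is preserved at the intermediate steps and only the final reduction to dimension one contributes. I would handle this by inducting on $d$: the $d=2$ correction is visible in Lemma \ref{2-II}, and a nonzero correction at a higher step forces a correction at the last step.

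\emph{The bound.} Apply Lemma \ref{ozekinumericallemma} to $v_n=\lambda(l_{Q_{d-1},I_{d-1}}(n))$. These satisfy (i)--(iii), since $I_{d-1}^{n+1}=Q_{d-1}I_{d-1}^n$ propagates to all larger $n$. Lemma \ref{2-prime}(i) gives $\sum_n v_n\le t$, where $t=\mathrm{g}_s(I)-\mathrm{g}_s(Q)-(e_0(I)-e_1(Q))e_1(Q)$. Hence $\sum_n n v_n\le\binom{t+1}{2}$. The $U$-terms vanish for $n\ge e_0(I)-e_1(Q)$ by Lemma \ref{pp}(i) applied on $R_{d-1}$ with $x=x_d$; note $e_0$ and $e_1(Q)$ are unchanged by Proposition \ref{hibertcoeffsup}. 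This truncates the $U$-sum and gives (\ref{first bound e_2 equalityyy}).

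\emph{Equality.} If equality holds, every inequality above is an equality. First, the correction terms vanish, so $\depth G(I)\ge d-1$; this is (i). Second, $\sum n v_n=\binom{t+1}{2}$, so by the equality clause of Lemma \ref{ozekinumericallemma}, $v_n=1$ for $1\le n\le t$ and $v_n=0$ for $n>t$. By (i) and the isomorphism (\ref{iso-length1}), $v_n=\lambda(l_{Q,I}(n))$. If $I\neq Q$ then $t\ge 1$ and $r_Q(I)=t+1$, which is (ii). When $I\ne Q$ but $t=0$, I would argue that $I^2=QI$ together with $l_{Q,I}(0)\neq0$ still forces $r_Q(I)=1=t+1$.

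For (iii), use Corollary \ref{e_i}. Since $\depth G(I)\ge d-1$, the proof of \cite[Corollary 2.6]{sam} gives $w_n(Q,I)=0$. Therefore
\[
e_i(I)=\sum_{n\ge i-1}\binom{n}{i-1}\bigl[\lambda(l_{Q,I}(n))-\lambda(U_I(d;n))\bigr].
\]
The first part equals $\sum_{n=i-1}^{t}\binom{n}{i-1}=\binom{t+1}{i}$ by the hockey-stick identity. The second part is nonnegative, which gives the upper bound. For the lower bound, use $\lambda(U_I(d;n))\le -e_1(Q)$ from (\ref{ud}) and the vanishing of $U_I(d;n)$ for $n\ge e_0(I)-e_1(Q)$. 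This gives $\sum_{n=i-1}^{e_0(I)-e_1(Q)-1}\binom{n}{i-1}(-e_1(Q))=-e_1(Q)\binom{e_0(I)-e_1(Q)}{i}$, which yields the stated lower bound.

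\emph{Main obstacle.} The hardest step is the first one: establishing $e_2(I)\le e_2(I_{d-1})$ with the correction terms of the right sign, and showing that equality forces $\depth G(I)\ge d-1$ in arbitrary dimension $d$. I would first try induction on $d$ using Sally's descent.
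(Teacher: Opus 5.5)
Your proposal is correct and is essentially the paper's proof. You use the same lemmas for the same steps: Lemma~\ref{2-II}/Proposition~\ref{hibertcoeffsup}, Lemma~\ref{pp}(i), Lemma~\ref{2-prime}(i) and Lemma~\ref{ozekinumericallemma} for the bound; the equality clause of Lemma~\ref{ozekinumericallemma} with (\ref{iso-length1}) for (ii); and Corollary~\ref{e_i} with $w_n(Q,I)=0$ and (\ref{ud}) for (iii). The obstacle you flag is milder than you fear: since $x_1,\ldots,x_{d-2}$ are $R$-regular, Proposition~\ref{hibertcoeffsup}(ii),(iii) give $e_2(I)=e_2(I_{d-2})$ exactly, so there is a single correction term $\sum_{n\ge0}\lambda\bigl((I_{d-2}^{n+1}:x_{d-1})/I_{d-2}^{n}\bigr)$, whose vanishing gives $\depth G(I_{d-2})\ge 1$ and hence $\depth G(I)\ge d-1$ by Sally's descent, which is the paper's induction on $d$ unrolled (also, $I\neq Q$ does not force $t\ge 1$, but your separate $t=0$ argument already covers (ii) there).
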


\begin{proof} 
 We prove by induction on dimension $d$. For $d=2,$ since $\depth R>0,$ we can choose $x_{1}$ a non zero-divisor for $R.$ Set $R'=R/(x_{1}),$ $I'=IR'$ and $Q'=QR'$. Hence
\small{\begin{align} \nonumber \label{newandfinalproofofbound}
    e_{2}(I) &= \displaystyle\sum_{n \geq 1} n\left[\lambda(l_{Q', I'}(n)
- \lambda(U_{I}(2;n))\right]
-  \displaystyle \sum_{n\geq 0}\lambda((I^{n+1}:x_{1})/I^{n}) \text{ (by Lemma \ref{2-II})}\\ \nonumber
& \leq \displaystyle\sum_{n \geq 1} n\lambda(l_{Q', I'}(n))
-\displaystyle \sum_{n= 1}^{e_{0}(I)-e_{1}(Q)-1}n\lambda(U_{I}(2;n)
-\displaystyle \sum_{n\geq 0}\lambda((I^{n+1}:x_{1})/I^{n}) \text{ (by Proposition \ref{pp}(\ref{pp-1}))}\\
& \leq \displaystyle\binom{\mathrm{g}_s(I) - \mathrm{g}_s(Q) - (e_{0}(I)-e_{1}(Q))e_1(Q) + 1}{2}-\displaystyle \sum_{n=1}^{e_{0}(I)-e_{1}(Q)-1}
           n\lambda(U_{I}(2;n)) \text{  (by Lemma \ref{2-prime} and Lemma \ref{ozekinumericallemma})}
\end{align}}

  \normalsize
  Suppose equality holds in (\ref{newandfinalproofofbound}). Then   $\lambda\left((I^{n+1}:x_{1})/I^{n}\right)=0$ for all $n \geq 0$ which implies $\depth G(I) \geq 1.$

Assume $d \geq 3$ and that our assertion holds for $d-1.$   
From Proposition \ref{hibertcoeffsup}, we have $e_{0}(I) = e_{0}(I')$, $e_{1}(Q) = e_{1}(Q')$ and $e_{2}(I) = e_{2}(I')$. By Lemma \ref{lemmaa1}, $\mathrm{g}_s(I) = \mathrm{g}_s(I')$ and $\mathrm{g}_s(Q) = \mathrm{g}_s(Q')$. 
Since $e_{2}(I) = e_{2}(I')$ and $U_{I}(d;n)=U_{I'}(d-1;n),$ 
we get (\ref{first bound e_2 equalityyy}). Now suppose the upper bound is attained for $e_2(I)$, then it is also attained for $e_2(I').$ Therefore, by induction hypothesis, we get $\depth G(I') \geq (d-1)-1=d-2,$ and  thus, by Sally's descent $\depth G(I) \geq d-1.$ 

(\ref{depthh2})
Let $d \geq 2$. Assume that  $I$ is not a parameter ideal and equality holds in  (\ref{first bound e_2 equalityyy}), this implies $\depth G(I) \geq d-1.$ Thus, from  (\ref{newandfinalproofofbound}), we get $\displaystyle \sum_{n \geq 1}\lambda( l_{Q_{d-1},I_{d-1}}(n))=\binom{ \mathrm{g}_{s}(I)-\mathrm{g}_{s}(Q)-(e_{0}(I)-e_{1}(Q))e_{1}(Q)+1}{2}.$  Also, since $\depth G(I) \geq d-1,$ therefore, from Lemma \ref{ozekinumericallemma} and (\ref{iso-length1}), we get
\begin{eqnarray}
\label{length of lnq}
 \lambda( l_{Q,I}(n))   
 = \lambda( l_{Q_{d-1},I_{d-1}}(n))
 = \begin{cases}
 1 & n \leq \mathrm{g}_{s}(I)-\mathrm{g}_{s}(Q)-(e_{0}(I)-e_{1}(Q))e_{1}(Q)\\
 0 & n \geq \mathrm{g}_{s}(I)-\mathrm{g}_{s}(Q)-(e_{0}(I)-e_{1}(Q))e_{1}(Q)+1.
 \end{cases}
\end{eqnarray}
Therefore, $r_{Q}(I)= \mathrm{g}_{s}(I)-\mathrm{g}_{s}(Q)-(e_{0}(I)-e_{1}(Q))e_{1}(Q)+1.$

(\ref{depthh3})
Set $t=\mathrm{g}_{s}(I)-\mathrm{g}_s(Q) - (e_{0}(I)-e_{1}(Q))e_1(Q)$  and $c=e_{0}(I)-e_{1}(Q)-1.$ Suppose equality holds in (\ref{first bound e_2 equalityyy}), then from   (\ref{length of lnq}), we get  $\lambda( l_{Q,I}(n) )=1$ for all $1 \leq n \leq t,$ and $\lambda( l_{Q,I}(n) )=0$ for all $n \geq t+1.$  From Corollary \ref{e_i}, we have $e_{i}(I)\leq \displaystyle\sum_{n= i-1}^{t}\binom{n}{i-1}=\binom{t+1}{i}$,  since $\depth G(I) \geq d-1,$ thus $w_{n}(Q,I)=0$ for all $n.$ 
Again from Corollary \ref{e_i} and (\ref{ud}), we get that for all $i=3, \ldots, d$
\begin{eqnarray*}
    e_{i}(I)
\geq  \sum_{n=i-1}^{t} \binom{n}{i-1} + \sum_{n=i-1}^c \binom{n }{i-1}e_1(Q)=\binom{t+1}{i}+\binom{c+1}{i}e_1(Q).
\end{eqnarray*}
As $e_1(Q)\leq 0$ we get the required bounds.
  \end{proof}

As a  corollary, we recover Ozeki's bound \cite[Proposition 3.2]{ozeki} for  $e_{2}(I)$ in the case when the ring   is Cohen-Macaulay. 

  \begin{corollary}\cite[Proposition 3.2, Theorem 3.4]{ozeki}\label{cml}
     Let $(R,\m)$ be a Cohen-Macaulay local ring of dimension $d\geq 2$. Let $I$ be an $\m$-primary ideal with minimal reduction  $Q$. 
    Then
    \begin{equation} \label{ozekibounde2}
        e_2(I)
\leq \displaystyle\binom{\mathrm{g}_s(I)  + 1}{2}.
    \end{equation}
    Suppose equality holds in  (\ref{ozekibounde2}). Then the following are true:
        \begin{enumerate} [\normalfont(i)]
      \item  $\depth G(I) \geq d-1.$
\item  If $I$ is not a parameter ideal  then 
          $r_{Q}(I)=\mathrm{g}_s(I) +1.$
\item 
For $3 \leq i \leq d,$ we have $ e_{i}(I) =\displaystyle\binom{\mathrm{g}_s(I)  + 1}{i}.$
    \end{enumerate}
\end{corollary}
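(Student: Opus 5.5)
The plan is to specialize Theorem \ref{finalupperbounde2} to the Cohen--Macaulay case. The first step is to record what the invariants of $Q$ and the correction terms become when $R$ is Cohen--Macaulay. Since $Q$ is generated by a regular sequence, $G(Q)$ is a polynomial ring over $R/Q$. Hence $\lambda(R/Q^{n+1})=\lambda(R/Q)\binom{n+d}{d}$, which gives $e_0(Q)=\lambda(R/Q)$ and $e_i(Q)=0$ for $i\geq 1$. In particular $e_1(Q)=0$ and $\mathrm{g}_s(Q)=\lambda(R/Q)-e_0(Q)+e_1(Q)=0$.

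The second step is to show that all the modules $U_I(d;n)$ vanish. Because $x_1,\ldots,x_d$ is a regular sequence, $(x_1,\ldots,x_{d-1}):x_d=(x_1,\ldots,x_{d-1})$, so the numerator of $U_I(d;n)$ equals its denominator. Thus $\lambda(U_I(d;n))=0$ for all $n$, and the sum in (\ref{first bound e_2 equalityyy}) drops out. Substituting these values, the bound (\ref{first bound e_2 equalityyy}) becomes $e_2(I)\leq\binom{\mathrm{g}_s(I)+1}{2}$, which is (\ref{ozekibounde2}).

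For the equality case, suppose equality holds in (\ref{ozekibounde2}). Then equality holds in (\ref{first bound e_2 equalityyy}), because the two right-hand sides coincide. Part (i) is then Theorem \ref{finalupperbounde2}(\ref{depthh1}), and part (ii) is Theorem \ref{finalupperbounde2}(\ref{depthh2}) with $t=\mathrm{g}_s(I)$. For part (iii), Theorem \ref{finalupperbounde2}(\ref{depthh3}) gives
\[
\binom{t+1}{i}+e_1(Q)\binom{e_0(I)-e_1(Q)}{i}\leq e_i(I)\leq \binom{t+1}{i}.
\]
Since $e_1(Q)=0$, the two outer quantities are equal, so $e_i(I)=\binom{\mathrm{g}_s(I)+1}{i}$ for $3\leq i\leq d$.

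I expect no serious obstacle; the only point to state carefully is the vanishing of $e_1(Q)$, $\mathrm{g}_s(Q)$ and the $U_I(d;n)$. For this, note that the lower bound in Theorem \ref{finalupperbounde2}(\ref{depthh3}) came from inequality (\ref{ud}), $\lambda(U_I(d;n))\leq -e_1(Q)$. Both sides of that inequality are $0$ in the Cohen--Macaulay case, so the upper and lower bounds in (iii) indeed coincide.
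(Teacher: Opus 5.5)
Your proposal is correct and follows the paper's own proof. Like the paper, you specialize Theorem \ref{finalupperbounde2} using $\mathrm{g}_s(Q)=e_1(Q)=0$ and $\lambda(U_I(d;n))=0$ in the Cohen--Macaulay case, and you read off (i)--(iii) from the equality clauses; the two bounds in (iii) coincide because $e_1(Q)=0$. You also justify these vanishings (via $G(Q)$ being a polynomial ring over $R/Q$, and via the colon ideal $(x_1,\ldots,x_{d-1}):x_d=(x_1,\ldots,x_{d-1})$), where the paper simply asserts them.
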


\begin{proof}
    Since $R$ is a Cohen-Macaulay local ring,  and $Q$ is a parameter ideal, thus
$\mathrm{g}_{s}(Q)=e_{1}(Q)=0$. Also,
 $\lambda (U_{I}(d;n)) = 0$ for all $n \geq 0$.  Therefore, from Theorem \ref{finalupperbounde2}, we get $e_{2}(I) \leq \dbinom{\mathrm{g}_{s}(I)+1}{2}.$
    
    Suppose $e_{2}(I) = \dbinom{\mathrm{g}_{s}(I)+1}{2}$, then from  Theorem \ref{finalupperbounde2} (\ref{depthh1}), we have $\depth G(I) \geq d-1$  and  from Theorem \ref{finalupperbounde2} (\ref{depthh2}), $r_{Q}(I)=\mathrm{g}_{s}(I)+1$. Further, again from  Theorem \ref{finalupperbounde2} (\ref{depthh3}),  $e_{i}(I)=\dbinom{\mathrm{g}_{s}(I)+1}{i}$ for $3 \leq i \leq d.$
\end{proof}

The following example illustrates the  bounds established on $e_{2}(I)$ in Theorem \ref{finalupperbounde2}. 

 \begin{example} \cite[Example 2.3]{sk} \label{optimalexample}
 Let $S=\mathbb Q[[X,Y,Z,W]]$ and $K = (X,Y)\cap(Z,W)= (XZ, XW, YZ, YW),$ where $X,Y,Z,W$ are variables. Put  $ R= S/K=\mathbb Q[[x,y,z,w]],$ where $x,y,z,w$ are  the images of  $X,Y,Z,W$ in $R.$ Then  $\dim R=2$  and $\depth R=1.$   Let $\m=(x,y,z,w)$ denote the maximal ideal of $R.$ Note that  $x-z$, $y-w$ is a system of parameters for $R$  and $Q=(x-z,y-w)$ is a minimal reduction of $\m.$  Then
 \begin{align*}
     \lambda \left( \frac{R}{\m^n }\right)
     &= \lambda \left( \frac{S} {(X,Y)^n + (Z,W)^n + (XZ, XW, YZ, YW)}\right)
     = 1 +   2(2 + \cdots + n)
     = 2 \binom{n+1}{2}-1 \text{ for } n\geq 1,\\ 
     \lambda \left( \frac{R}{Q^n }\right)
    & = \lambda \left( \frac{S} {(X^{n-i}Y^i +  (-1)^nZ^{n-i}W^i:i=0, \ldots,n) + (XZ, XW, YZ, YW)}\right)&\\
    & = 1 +  2(2 + \cdots + n) + (n+1)
     = 2 \binom{n+1}{2} +n  \text { for } n \geq 2. 
 \end{align*}
   Hence, $e_{0}(\m)=2$, $e_{1}(\m)=0$ and 
 $e_{2}(\m)=-1,$  $e_{0}(Q)=2,$  $e_{1}(Q)=-1$,  $\mathrm{g}_{s}(\m)=-1$  and $\mathrm{g}_{s}(Q)=0.$  From \cite[Corollary 2.9]{goto1980}, $\depth G(I)=1.$ Then
$$-1=e_{2}(\m) \geq  \mathrm{g}_{s}(\m)-\mathrm{g}_{s}(Q)+e_{1}(Q)\dbinom{e_{0}(\m)-e_{1}(I)}{2}=-1-\binom{3}{2}=-4.$$
 
 Further, Set $R'=R/(x-z)$, $\m'=\m R'$ and $Q'=QR'.$ Since $e_{1}(Q')=-\lambda(H^{0}_{\m'}(R'))=-1$ and
 $0<\lambda(U_{\m}(2;1))\leq \lambda(H^{0}_{\m'}(R')),$ thus, 
 $\lambda(U_{\m}(2;1))=1.$ Further,
\small{\begin{align}
    \label{u_m(2,2)} \nonumber
     U_{\m}(2;2) 
     &= \frac{ ((X-Z) + K: ((Y-W)  + K) )\cap ((X,Y,Z,W)^2 + (X-Z) + K)}{(X-Z) + K}\\
     &= \frac{  (X-Z, XZ, XW, YZ, YW): ((Y-W  , XZ, XW, YZ, YW) )\cap (X-Z, YW, Z^2, YZ, W^2,ZW,  Y^2)}{(X-Z,  XW, Z^2, YZ, YW)}.
\end{align}}

\normalsize
Now, $ (X-Z) + K: ((Y-W)  + K)=(X-Z, XZ, XW, YZ, YW): (Y-W  , XZ, XW, YZ, YW)=(X,Z, YW).$ Therefore, 
\footnotesize{\begin{align} \label{u_m(2,2)-2} \nonumber
    (X,Z, YW) \cap ((X-Z, YW, Z^2, ZW, YZ) +  (W^2,  Y^2))& =(X-Z, YW, Z^2, ZW, YZ) + (X,Z, YW) \cap (W^2,  Y^2)\\ \nonumber
  &  = (X-Z, YW, Z^2, ZW, YZ) + (ZW^2, YW^2, XW^2, Y^2W, Y^2Z, XY^2)\\ \nonumber
   & = (X-Z, YW, Z^2, ZW, YZ)\\
   &= (X-Z, YW, Z^2, XW, YZ). 
\end{align}}

\normalsize From (\ref{u_m(2,2)}) and (\ref{u_m(2,2)-2}), we get $\lambda(U_{\m}(2;2))=0.$
    Then $$-1=e_{2}(\m) \leq \binom{\mathrm{g}_s(\m) - \mathrm{g}_s(Q) - (e_{0}(\m)-e_{1}(Q))e_1(Q) + 1}{2}-\lambda(U_{\m}(2;1))=\binom{3}{2}-1=2.$$
  \end{example}
\normalsize

We now obtain an upper bound for $e_2(I)$ in the case $\dim R = 2$ and  $\depth R = 0$.

\begin{proposition} \label{lw} 
    Let $(R,\m)$ be a two-dimensional Noetherian local ring with $\depth R =0.$ Let $I$ be an $\m$-primary ideal with minimal reduction of $Q$.   Then
    \begin{equation*}
    \label{eqn:lw}
         e_{2}(I) 
\leq  \binom{\mathrm{g}_{s}(I)+(e_{0}(I)-e_{1}(Q))\left(1-e_{1}(Q)\right)}{2}
+       \lambda\left(H_{\m}^{0}(R)\right).
    \end{equation*}
\end{proposition}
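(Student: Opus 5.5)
We reduce to the positive-depth case by passing to $\overline{R}=R/H^{0}_{\m}(R)$ and then invoke Theorem \ref{finalupperbounde2} in dimension two. Set $W=H^{0}_{\m}(R)$, $\overline{I}=I\overline{R}$ and $\overline{Q}=Q\overline{R}$. Then $\overline{Q}$ is a minimal reduction of $\overline{I}$, and $\depth \overline{R}\geq 1=d-1$. By \cite[Proposition 2.3]{rv}, as already used in the proof of Proposition \ref{parameterbounddepthzero}, we have $e_i(\overline{I})=e_i(I)$ for $i=0,1$ and $e_2(I)=e_2(\overline{I})+\lambda(W)$; the same holds for $Q$. The whole problem therefore reduces to showing
\[
e_2(\overline{I})\leq \binom{\mathrm{g}_s(I)+(e_0(I)-e_1(Q))(1-e_1(Q))}{2}.
\]

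Applying Theorem \ref{finalupperbounde2} to $\overline{I}\subseteq\overline{R}$ and discarding the nonnegative sum $\sum n\lambda(U_{\overline{I}}(2;n))$ gives
\[
e_2(\overline{I})\leq \binom{\mathrm{g}_s(\overline{I})-\mathrm{g}_s(\overline{Q})-(e_0(I)-e_1(Q))e_1(Q)+1}{2}.
\]
We next compare the two top arguments. Since $\lambda(\overline{R}/\overline{I})=\lambda(R/(I+W))\leq \lambda(R/I)$ and $e_0,e_1$ are unchanged, $\mathrm{g}_s(\overline{I})\leq \mathrm{g}_s(I)$. For $\overline{Q}$ we have $\mathrm{g}_s(\overline{Q})=\lambda(\overline{R}/\overline{Q})-e_0(Q)+e_1(Q)$. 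Because $\overline{R}$ has depth at least one and $\overline{Q}$ is a parameter ideal, Lemma \ref{2-prime}(i) applied with $I=Q$ (whose left-hand side is $0$) gives
\[
-\mathrm{g}_s(\overline{Q})\leq (e_0(Q)-e_1(Q))e_1(Q)\leq 0.
\]
The target therefore follows if we show
\[
-\mathrm{g}_s(\overline{Q})+1\leq e_0(I)-e_1(Q).
\]
Since $\binom{m}{2}$ is increasing for $m\geq 1$, it suffices to compare the arguments. (If an argument is $\leq 0$, the binomial is $0$ and a separate trivial check is needed.)

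It remains to prove this last inequality. We have $e_0(I)-e_1(Q)\geq e_0(I)\geq 1$ because $e_1(Q)\leq 0$ (McCune's bound in the proof of Proposition \ref{lori}, or \cite{mousumi}). So it is enough to show $\mathrm{g}_s(\overline{Q})\geq 0$, i.e.\ $\lambda(\overline{R}/\overline{Q})\geq e_0(\overline{Q})-e_1(\overline{Q})$. This is the main obstacle. Cutting by a nonzerodivisor $x_1$ of $\overline{R}$, Lemma \ref{lemmaa1} reduces it to the one-dimensional statement $\lambda(R_1/Q_1)-e_0(Q_1)+e_1(Q_1)\geq 0$. By Lemma \ref{formulahilbcoeffindimone} with $l_{Q_1,Q_1}=0$, this expression equals $\lambda(0:x_2)-\sum_{n\geq 0}\lambda((0:x_2)\cap Q_1^n)+\lambda(0:x_2)$. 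I would try to control this via Lemma \ref{pp}, which gives $\sum_n\lambda((0:x_2)\cap Q_1^n)\leq (e_0-e_1)\lambda(H^0)$, and this is why the factor $(1-e_1(Q))$ appears in the statement. Concretely, I expect the correct bookkeeping to keep $\mathrm{g}_s(\overline{Q})$ together with the $-(e_0-e_1)e_1$ term and bound both jointly by $e_0(I)-e_1(Q)$ using $\lambda(H^0_{\m_1}(R_1))=-e_1(Q)$ from \cite[Proposition 3.1]{mousumi}.
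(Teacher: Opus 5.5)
Your setup coincides with the paper's, and it is correct up to the last step. You pass to $\overline R=R/H^0_{\m}(R)$, use $e_2(I)=e_2(\overline I)+\lambda(H^0_{\m}(R))$, apply Theorem~\ref{finalupperbounde2} to $\overline I$, use $\mathrm{g}_s(\overline I)\le \mathrm{g}_s(I)$, and reduce to $-\mathrm{g}_s(\overline Q)+1\le e_0(I)-e_1(Q)$.

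\textbf{The gap.} You then drop $-e_1(Q)$ from the right-hand side and try to prove $\mathrm{g}_s(\overline Q)\ge 0$. That is false in general; in this setting one actually has $\mathrm{g}_s(\overline Q)\le 0$. Your one-dimensional computation, done correctly (your expression counts $\lambda(0:x_2)$ twice), gives
\[
\mathrm{g}_s(\overline Q)=\mathrm{g}_s(Q_1)=-\sum_{n\ge 1}\lambda\bigl((0:x_2)\cap Q_1^n\bigr).
\]
This is strictly negative, for instance, when $\overline R=k[[x,y,t]]/(xy,y^3)$ and $\overline Q=(t,x+y)$. This $\overline R$ is $R/H^0_{\m}(R)$ for $R=k[[x,y,t,z]]/(xy,y^3,zx,zy,zt,z^2)$. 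There $\lambda(\overline R/\overline Q)=2$, $e_0(\overline Q)=1$ and $e_1(\overline Q)=-2$, so $\mathrm{g}_s(\overline Q)=-1$.

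Your appeal to Lemma~\ref{2-prime}(i) with $I=Q$ does not help either. The two sectional genera cancel there, so it says nothing about $\mathrm{g}_s(\overline Q)$. The inequality $-\mathrm{g}_s(\overline Q)\le (e_0(Q)-e_1(Q))e_1(Q)$ you read off from it fails in this example, since it would say $1\le -6$.

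\textbf{The fix.} Substitute $\mathrm{g}_s(\overline Q)=\lambda(\overline R/\overline Q)-e_0(Q)+e_1(Q)$ together with $e_0(Q)=e_0(I)$. The required inequality $-\mathrm{g}_s(\overline Q)+1\le e_0(I)-e_1(Q)$ then becomes literally $\lambda(\overline R/\overline Q)\ge 1$, which is trivial. The $-e_1(Q)$ inside $-\mathrm{g}_s(\overline Q)$ is exactly what the extra summand $e_0(I)-e_1(Q)$ in $(e_0(I)-e_1(Q))(1-e_1(Q))$ absorbs, so it cannot be discarded.

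This is the paper's argument. It proves $\mathrm{g}_s(\overline I)-\mathrm{g}_s(\overline Q)\le \mathrm{g}_s(I)-1+e_0(I)-e_1(Q)$ from $\lambda(R/(I+H^0_{\m}(R)))\le\lambda(R/I)$ and $\lambda(R/(Q+H^0_{\m}(R)))\ge 1$. It then substitutes this into the two-dimensional bound of Theorem~\ref{finalupperbounde2}.

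Your concern about monotonicity of $\binom{m}{2}$ is harmless. By Lemma~\ref{2-prime}(i) applied to $\overline I$, the argument on the left is at least $1$.
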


\begin{proof}
Put  $\overline{R}=R/\left(H_{\m}^{0}(R)\right), \overline{I}=I\overline{R}$ and $\overline{Q}=Q\overline{R}$.   Then 
\begin{align} \label{ccc} \nonumber
    \mathrm{g}_{s}(\overline{I}) -  \mathrm{g}_{s}(\overline{Q})
 &= \lambda(\overline{R}/\overline{I})-e_{0}(\overline{I})+e_{1}(\overline{I}) \nonumber
- ( \lambda(\overline{R}/\overline{Q})-e_{0}(\overline{Q})+e_{1}(\overline{Q})) \\ \nonumber
&= \lambda\left(\frac{R}{I + H_{\m}^{0}(R)}\right)-e_{0}({I})+e_{1}({I})
   -\lambda\left(\frac{R}{Q + H_{\m}^{0}(R)}\right)+e_{0}({Q})-e_{1}({Q}) 
     \mbox{ (by \cite[Proposition 2.3]{rv})}\\ \nonumber
&\leq \lambda({R}/{I})-e_{0}({I})+e_{1}({I}) -1+e_{0}(Q)-e_{1}(Q)\\
&= \mathrm{g}_s(I) - 1+e_{0}({I})-e_{1}({Q}).
 \end{align}

Since $\overline{R}$ is a two-dimensional ring with $\depth \overline{R} >0.$ Therefore, from from \cite[Proposition 2.3]{rv}), we have 
\begin{align} \nonumber
e_{2}(I)&=e_{2}(\overline{I}))+\lambda\left(H_{\m}^{0}(R)\right) \\\nonumber
& \leq \binom{\mathrm{g}_{s}(\overline{I})-\mathrm{g}_{s}(\overline{Q})-(e_{0}(\overline{I})-e_{1}(\overline{Q}))e_{1}(\overline{Q})+1}{2}+\lambda\left(H_{\m}^{0}(R)\right) \text{ (by Proposition \ref{finalupperbounde2})}\\ \nonumber\label{ddd}
    &\leq \binom{\mathrm{g}_{s}(I)+(e_{0}(I)-e_{1}(Q))(1-e_{1}(Q))}{2}+\lambda\left(H_{\m}^{0}(R)\right) \text{ (by (\ref{ccc})) }. &&\qedhere
\end{align} 
\end{proof}

In the following proposition, we study the variation of $e_{2}(I)$, i.e., how $e_{2}(I)$ varies when $I$ is enlarged. Here, the reference point for this comparison is a parameter ideal $Q$ generated by a system of parameters of $R$, which is a minimal reduction of $I.$ More precisely, we bound the deviation $e_{2}(I)-e_2(Q)$ in terms of sectional genus of $I.$

\begin{proposition} \label{deviationbound} 
    Let $(R,\m)$ be a Buchsbaum local ring of dimension $d \geq 2$. Let $I$ be an $\m$-primary ideal with minimal reduction $Q$. Then
    \begin{eqnarray}   \label{eqn-mum}
    e_{2}(I)-e_{2}(Q) \leq \binom{\mathrm{g}_{s}(I)+(e_{0}(I)-e_{1}(Q))\left(1-e_{1}(Q)\right)}{2}.
    \end{eqnarray}
\end{proposition}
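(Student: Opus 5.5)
The plan is to reduce to dimension two by cutting down with superficial elements, and then combine Theorem~\ref{finalupperbounde2} (when $\depth R>0$) or Proposition~\ref{lw} (when $\depth R=0$) with the explicit formulas for the Hilbert coefficients of parameter ideals in Buchsbaum rings from \cite[Corollary 4.2]{trung3}.

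\emph{Reduction to $d=2$.} Suppose $d\geq 3$. By Remark~\ref{remarksup}, choose $x\in Q$ superficial for both $I$ and $Q$ and part of a minimal generating set of $Q$ that is a system of parameters. Put $R'=R/(x)$, $I'=IR'$ and $Q'=QR'$. Then:
\begin{enumerate}[\normalfont(a)]
\item $R'$ is again Buchsbaum, since $x$ is part of a system of parameters;
\item $Q'$ is a minimal reduction of $I'$;
\item by Proposition~\ref{hibertcoeffsup}(ii), $e_2(I)=e_2(I')$, $e_2(Q)=e_2(Q')$, $e_0(I)=e_0(I')$ and $e_1(Q)=e_1(Q')$, because $2\le d-2$;
\item by Lemma~\ref{lemmaa1}, $\mathrm{g}_s(I)=\mathrm{g}_s(I')$.
\end{enumerate}
So both sides of (\ref{eqn-mum}) are unchanged, and induction reduces the statement to $d=2$.

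\emph{The case $d=2$, $\depth R\geq1$.} Trung's formula gives $e_2(Q)=\lambda(H^0_\m(R))=0$ and $e_1(Q)=-\lambda(H^1_\m(R))$. Since $\lambda(R/Q)-e_0(Q)=\lambda(H^0_\m(R))+\lambda(H^1_\m(R))$ in a Buchsbaum ring, we get $\mathrm{g}_s(Q)=\lambda(H^0_\m(R))=0$. Theorem~\ref{finalupperbounde2}, after dropping the nonnegative $U$-term, then yields $e_2(I)-e_2(Q)\le\binom{t+1}{2}$ with
\[
t=\mathrm{g}_s(I)-(e_0(I)-e_1(Q))e_1(Q).
\]
By Lemma~\ref{2-prime}(i), $t\ge0$, since $t$ bounds a sum of lengths. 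Moreover, $e_0(I)-e_1(Q)\ge1$ because $e_1(Q)\le0$, so
\[
t+1\le \mathrm{g}_s(I)+(e_0(I)-e_1(Q))(1-e_1(Q)).
\]
Monotonicity of $\binom{m}{2}$ for $m\ge0$ gives (\ref{eqn-mum}).

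\emph{The case $d=2$, $\depth R=0$.} Proposition~\ref{lw} gives $e_2(I)\le\binom{\mathrm{g}_s(I)+(e_0(I)-e_1(Q))(1-e_1(Q))}{2}+\lambda(H^0_\m(R))$. Trung's formula gives $e_2(Q)=\lambda(H^0_\m(R))$, and subtracting finishes the proof.

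\emph{Main obstacle.} The step most likely to need care is the bookkeeping in the reduction. We must ensure that a single $x$ can be chosen superficial for both $I$ and $Q$, and that Buchsbaumness descends to $R'$. We must also check that Lemma~\ref{lemmaa1} applies equally to $Q$, although only $\mathrm{g}_s(I)$ appears in the final bound. A secondary point is confirming that the binomial comparison is legitimate, i.e.\ that $t\ge0$ so that monotonicity applies; Lemma~\ref{2-prime}(i) provides this.
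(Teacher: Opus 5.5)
Your proposal is correct and is essentially the paper's proof: induction on $d$ through a common superficial element $x\in Q$, with the base case $d=2$ coming from Trung's formula $e_2(Q)=\lambda(H^0_\m(R))$ together with Proposition~\ref{lw}. Your separate positive-depth case (Theorem~\ref{finalupperbounde2} with $\mathrm{g}_s(Q)=0$) is just the $H^0_\m(R)=0$ instance of that argument. The paper leaves this instance implicit, since Proposition~\ref{lw} is stated only for $\depth R=0$.

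One slip to fix: for $d=3$ you do not have $2\le d-2$. There Proposition~\ref{hibertcoeffsup}(iii) gives $e_2(I')=e_2(I)+\lambda(0:_Rx)$ and $e_2(Q')=e_2(Q)+\lambda(0:_Rx)$, which can be nonzero when $\depth R=0$. So only the difference $e_2(I)-e_2(Q)$ is invariant, but that is all you need, and it is exactly what the paper uses.
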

\begin{proof}
        We prove by induction on dimension $d$. 
        Let  $d=2$.  Since $Q$ is a parameter ideal,  by \cite[Corollary 4.2]{trung3}, we get $e_{2}(Q)=\lambda\left(H_{\m}^{0}(R)\right).$ Thus, the conclusion follows from Proposition \ref{lw}. 

        Assume $d \geq 3$ and that our assertion holds for $d-1.$ 
         Let $x\in Q$ be a superficial element of both $Q$ and $I.$ Set $R'=R/(x),$ $I'=IR'$ and $Q'=QR'.$  From Lemma \ref{lemmaa1}, we have $\mathrm{g}_{s}(I')=\mathrm{g}_{s}(I)$ and from Proposition \ref{hibertcoeffsup}, we have $e_{0}(I')=e_{0}(I)$ and $e_{1}(Q')=e_{1}(Q)$.
        Hence,
        \begin{align*}
            e_{2}(I)-e_{2}(Q)
            &=e_{2}(I')-e_{2}(Q') 
            \text{ (by Proposition \ref{hibertcoeffsup})}\\
            &\leq \binom{\mathrm{g}_{s}(I')+(e_{0}(I')-e_{1}(Q'))\left(1-e_{1}(Q')\right)}{2} \text{ (by induction hypothesis)}\\
            &=  \binom{\mathrm{g}_{s}(I)+(e_{0}(I)-e_{1}(Q))\left(1-e_{1}(Q)\right)}{2}.   \qedhere
        \end{align*}
\end{proof}

 We conclude this section with the following example  which provides an infinite class of examples where the bound in Proposition \ref{deviationbound} is sharp.
\begin{example} \cite[Example~3.6]{goto-yoshida}
\label{example depth 0} 
    Let $\kk$ any field, and let $d \geq 2$ and $r \geq 1$ be integers. Put $A = \kk[X_0, \ldots, X_d]$   and $K_r =  (X_0 (X_0, \ldots, X_d)^r)$. Then is Buchsbaum if and only if $r=1.$ Let 
     $R_{d,r}= (A/ K_r)_{\n (A/K_{r})}= \kk[x_0, \ldots, x_d]_{\m}$ where $\n=(X_0, \ldots, X_d)A$ and $\m =\n R_{d,r}.$
       Then we have the following:
    \begin{enumerate}
    \item 
    For all $r \geq 1$, $\depth R_{d,r} =0 = \depth G({\m})$. And  $R_{d,1}$  is locally Buchsbaum of dimension $d$ if and only if $r=1.$

        \item Let  $\q= (X_1, \ldots, X_d)$ and $Q = \q R_{d,r}$.    
        Since $K_r = \displaystyle  \sum_{i=0}^{r} (X_0^{r+1-i}) (X_1, \ldots, X_d)^i$,    
        for all $n \geq r+1$, we have     
 \begin{eqnarray} \label{Qreg12} \nonumber
     \n^n + K_r &=& (X_0, \ldots, X_d)^n + K_r\\ \nonumber
     &=& (X_{0})(X_1, \ldots, X_d)^{n-1}+(X_{1},\ldots,X_{d})^{n} +K_{r}\\ \nonumber
     &=& (X_1, \ldots, X_d)^r +K_{r}\\ 
&=&\q^n + K_r. 
 \end{eqnarray}
Therefore, for all $n \geq r+1$, we have the following short exact sequence 
 \begin{eqnarray} \label{seqabc}
 0 \longrightarrow
  \frac{(X_{1},\ldots,X_{d})^{n}+(X_{0})}{\n^{n}+K_{r}}
   \longrightarrow
      \frac{A}{\n^n + K_r}
      \longrightarrow
 \frac{A}{(X_1, \ldots, X_d)^n + (X_0)} \longrightarrow 0.
 \end{eqnarray}
Note that from equation (\ref{Qreg12}), we have 
$\dfrac{(X_{1},\ldots,X_{d})^{n}+(X_{0})}{\n^{n}+K_{r}}=\dfrac{(X_{1},\ldots,X_{d})^{n}+(X_{0})}{\q^{n}+K_{r}}$, thus, 
$$\frac{(X_{1},\ldots,X_{d})^{n}+(X_{0})}{\q^{n}+K_{r}} \cong \frac{(X_{0})}{(\q^{n}+K_{r})\cap(X_{0})}  = \frac{(X_{0})}{(X_{0})(X_{0},\ldots, X_{d})^{r}}  \cong \frac{A}{(X_{0},\ldots, X_{d})^{r}}.$$

Consider
 \begin{align*}
    \lambda \left( \frac{R_{d,r}}{\m^n}\right)
    &=  \lambda \left( \frac{A}{\n^n + K_r}\right) & \\
    &= \lambda \left(\frac{A}{(X_1, \ldots, X_d)^n + (X_0) } \right)
  +  \lambda \left( \frac{A}{(X_0, \ldots, X_d)^r} \right) & \text{ (from (\ref{seqabc}))}\\
  &= \dbinom{n+d-1}{d} + \dbinom{r+d}{d+1} & \text{(for sufficiently large n).}
 \end{align*}
 Hence 
\begin{eqnarray*}
e_i(\m) =e_i(Q)= \begin{cases}
   1 & ;i=0\\
   0 & ;i=1, \ldots, d-1\\
   (-1)^d\dbinom{r + d}{d+1} & ;i=d.  
\end{cases}
\end{eqnarray*}
\item 
If $d=2$, then $e_2(\m)=\dbinom{r+2}{3}>0$. Further, if $d>2$, then $e_2(\m)=0$ but $e_d(\m) \not =0$. This shows that  Corollary~\ref{lori} does not hold true without the assumption  that $\depth R \geq d-1$. 

\item Let $r \geq 1$. Then we have 
\begin{eqnarray*}
    H^0_{\m}(R_{d,r}) 
    \cong \frac{(X_0)}{X_0(X_0, X_1, \ldots, X_d)^r} 
    \cong  \frac{ A}{(X_0, X_1, \ldots, X_d)^r}. 
\end{eqnarray*}
Hence,  if $d=2$, then  $\lambda(H^0_{\m}(R_{d,r}) ) = e_2(\m),$ which shows that the bound of $e_2(\m)$ is sharp and  verifies  
Proposition~\ref{parameterbounddepthzero}.

\item Let $r=1,$ then $R_{d,r}$ is Buchsbaum. Note that $\mathrm{g}_s(\m) = 0$, thus, from (\ref{eqn-mum}) we have 
\begin{eqnarray*}
    0 = e_2(\m)-e_2(Q) \leq \binom{\mathrm{g}_s(\m) + (e_0(\m) - e_1(Q)) (1 -e_1 (Q))}{2}
    = \binom{0  + 1}{2} =0.
 \end{eqnarray*}
 This implies that the bound in Proposition \ref{deviationbound} is sharp. 
    \end{enumerate}
 \end{example}

\section{An upper bound fpr the Second Hilbert Coefficient in Buchsbaum rings} \label{section5}
In this section, we give another upper bound for $e_{2}(I)$ in a Buchsbaum local ring using the technique of $S_{2}$-fication.  Note that the upper bound given in Theorem \ref{finalupperbounde2}  depends on $\displaystyle \sum_{n=1}^{e_{0}(I)-e_{1}(Q)-1}
           n\lambda(U_{I}(d;n))$, which can be challenging to compute. In contrast, the following upper bound is easier to compute, as it relies solely on the sectional genus and Hilbert coefficients.

We recall the concept of $S_{2}$-fication of  generalized Cohen-Macaulay rings  (see  \cite[Section 4]{gghv}).  Let $R$ be a Noetherian ring with the total ring of fractions $\mathbb K$. The smallest finite extension $\phi: R\longrightarrow \mathtt S \subseteq \mathbb K$ satisfying Serre's condition $(S_{2})$ is called the $S_{2}$-fication of $R$. 
If  $(R,\m)$ be a Buchsbaum local ring with positive depth,   then $\mathtt S=\Hom_{R}(\m,\m)$ is the $S_{2}$-fication of $R$ ( see \cite[Theorem 4.2]{gghv}). The ring  $\mathtt S$ is semilocal   (\cite[Proposition 4.1(2)]{gghv}). Let $\{\M_{1},\ldots, \M_{l}\}$ be the maximal ideals of $\mathtt S,$ and let $\mathtt S_{1},\ldots,\mathtt S_{l}$ denote the corresponding localizations. Further, for each $1\leq i\leq l,$ let $f_{i}$ be the relative degree $[\mathtt S/\M_{i}:R/\m].$

In the following proposition, we  give another upper  bound for $e_{2}(I)$ under the hypothesis that $I=I\mathtt{S}.$
\begin{proposition} \label{dc}
    Let $(R, \m)$ be a two-dimensional Buchsbaum ring with  $\depth R >0$  and $\mathtt{S}$ be the $S_{2}$-fication of $R$. Let $I$ be an $\m$-primary ideal with minimal reduction $Q$ and assume that $I=I\mathtt{S}$. Then
\begin{equation*}
    e_{2}(I) \leq \binom{\mathrm{g}_{s}(I)-e_{1}(Q)+1}{2}+e_{1}(Q) .
    \end{equation*}
  \end{proposition}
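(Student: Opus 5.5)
The strategy is to transfer the problem to the $S_2$-fication $\mathtt S$, which is a Cohen--Macaulay semilocal ring of dimension $2$, apply Ozeki's bound there, and then translate the coefficients back to $R$.

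Since $R$ is a two-dimensional Buchsbaum ring of positive depth, $\mathtt S=\Hom_R(\m,\m)$ is a finite birational extension. The cokernel $\mathtt S/R$ is isomorphic to $H^1_\m(R)$, has finite length, and is annihilated by $\m$. Because $I=I\mathtt S$, we get $I^n=I^n\mathtt S$ for all $n\ge 1$, so $I^n$ is an ideal of $\mathtt S$.

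The first step compares Hilbert functions. From the exact sequence $0\to R/I^n\to \mathtt S/I^n\to \mathtt S/R\to 0$, valid for $n\geq1$ since $I^n\mathtt S=I^n$, we obtain
\[
\lambda_R(R/I^{n})=\lambda_R(\mathtt S/I^n\mathtt S)-\lambda(H^1_\m(R)), \qquad n\ge 1.
\]
Here $\lambda_R(\mathtt S/I^n\mathtt S)=\sum_i f_i\,\lambda_{\mathtt S_i}(\mathtt S_i/I^n\mathtt S_i)$. Consequently $e_j(I)=\sum_i f_i e_j(I\mathtt S_i)$ for $j=0,1$, and
\[
e_2(I)=\sum_i f_i\,e_2(I\mathtt S_i)-\lambda(H^1_\m(R)).
\]
At $n=1$ the same sequence gives $\lambda(R/I)=\sum_i f_i\lambda(\mathtt S_i/I\mathtt S_i)-\lambda(H^1_\m(R))$. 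Combining these, the sectional genus satisfies $\mathrm{g}_s(I)=\sum_i f_i\,\mathrm{g}_s(I\mathtt S_i)-\lambda(H^1_\m(R))$. The Buchsbaum formula for parameter ideals (\cite[Corollary 4.2]{trung3}) gives $e_1(Q)=-\lambda(H^1_\m(R))$, since $H^0_\m(R)=0$. Writing $h=\lambda(H^1_\m(R))=-e_1(Q)$, this becomes
\[
\mathrm{g}_s(I)-e_1(Q)=\sum_i f_i\,\mathrm{g}_s(I\mathtt S_i).
\]

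The second step applies the Cohen--Macaulay case. Each $\mathtt S_i$ is a two-dimensional Cohen--Macaulay local ring, because $\mathtt S$ satisfies $(S_2)$ and is equidimensional of dimension $2$. The ideal $I\mathtt S_i$ is primary to the maximal ideal, and $Q\mathtt S_i$ is a reduction of it. By Corollary \ref{cml}, $e_2(I\mathtt S_i)\le\binom{\mathrm{g}_s(I\mathtt S_i)+1}{2}$, and by Northcott's inequality $\mathrm{g}_s(I\mathtt S_i)\ge 0$. Superadditivity of $x\mapsto\binom{x+1}{2}$ on nonnegative integers, together with $f_i\ge1$, gives $\sum f_i\binom{g_i+1}{2}\le\binom{\sum f_ig_i+1}{2}$. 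Therefore
\[
e_2(I)\le\binom{\mathrm{g}_s(I)-e_1(Q)+1}{2}-h=\binom{\mathrm{g}_s(I)-e_1(Q)+1}{2}+e_1(Q).
\]

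The main obstacle is the first step: verifying carefully that the Hilbert polynomial of $I$ over $\mathtt S$, computed as an $R$-module, splits over the localizations with the weights $f_i$. One also has to make sure the correction term $\lambda(\mathtt S/R)$ is a constant that enters $e_2$ with the right sign; with the alternating-sign convention, the constant term is $+e_2$, and subtracting $h$ lowers it. To control this, I would cite the facts about $\mathtt S$ in \cite[Section 4]{gghv}, in particular that $\mathtt S$ is Cohen--Macaulay in dimension $2$ and that $\mathtt S/R\cong H^1_\m(R)$. The identification $e_1(Q)=-h$ comes from \cite{trung3}.
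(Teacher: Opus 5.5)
Your proposal is correct and follows essentially the same route as the paper: establish $\mathrm{g}_s(I)-e_1(Q)=\sum_i f_i\,\mathrm{g}_s(I\mathtt{S}_i)$ and $e_2(I)=\sum_i f_i\,e_2(I\mathtt{S}_i)+e_1(Q)$, apply Corollary~\ref{cml} on each $\mathtt{S}_i$, and finish with superadditivity of $x\mapsto\binom{x+1}{2}$. The only differences are these. You derive the two identities directly from $0\to R/I^n\to\mathtt{S}/I^n\to\mathtt{S}/R\to 0$, together with $\mathtt{S}/R\cong H^1_{\m}(R)$ and Trung's formula, whereas the paper quotes lemmas from the literature. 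You also make explicit, via Northcott, the nonnegativity $\mathrm{g}_s(I\mathtt{S}_i)\ge 0$ that the paper's final inequality uses tacitly.
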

\begin{proof}
        We first note that $\mathrm{g}_{s}(I\mathtt{S}_{i})=\lambda_{\mathtt{S}_{i}}(\mathtt{S}_{i}/I\mathtt{S}_{i})-e_{0}^{\mathtt{S}_{i}}(I\mathtt{S}_{i})+e_{1}^{\mathtt{S}_{i}}(I\mathtt{S}_{i})$ for all $i=1,\ldots,l.$ Now
    \begin{align} \nonumber
      \sum_{i=1}^{l}\mathrm{g}_{s}(I\mathtt{S}_{i})f_{i} 
 & =\sum_{i=1}^{l}[\lambda_{\mathtt{S}_{i}}(\mathtt{S}_{i}/I\mathtt{S}_{i})
                           -e_{0}^{\mathtt{S}_{i}}(I\mathtt{S}_{i})+e_{1}^{\mathtt{S}_{i}}(I\mathtt{S}_{i})]f_{i} & \\ \nonumber
 & = \sum_{i=1}^{l}\lambda_{\mathtt{S}_{i}}(\mathtt{S}_{i}/I\mathtt{S}_{i})f_{i}-\displaystyle \sum_{i=1}^{l}e_{0}^{\mathtt{S}_{i}}(I\mathtt{S}_{i})f_{i}
 +    \sum_{i=1}^{l}e_{1}^{\mathtt{S}_{i}}(I\mathtt{S}_{i})f_{i} &  \\ \nonumber
 & = \lambda_{R}(\mathtt{S}/I\mathtt{S})-e_{0}^{R}(I\mathtt{S})+e_{1}^{R}(I\mathtt{S}) & \text{ (from the proof of \cite[Theorem 4.3]{gghv})}\\ \nonumber
& =\lambda_{R}(R/I\mathtt{S})+\lambda_{R}(\mathtt{S}/R)-e_{0}^{R}(I\mathtt{S})+e_{1}^{R}(I\mathtt{S}) &\\ \nonumber
&=\lambda(R/I)-e_{1}(Q)-e_{0}(I)+e_{1}(I) &\text{(since $I=I\mathtt{S}$ and by  \cite[Lemma 4.4]{shruti})}\\ \label{dcx}
 &=\mathrm{g}_{s}(I)-e_{1}(Q). 
    \end{align}
From Corollary \ref{cml}, we have $e_{2}(I\mathtt{S}_{i}) \leq \dbinom{\mathrm{g}_{s}(I\mathtt{S}_{i})+1}{2}.$ Now from \cite[Lemma 4.2 (iii)]{shruti} and \cite[Lemma 4.4]{shruti}, we have
    \begin{align*} 
      e_{2}(I)
 & = e_{2}^{R}(I\mathtt{S})+e_{1}(Q) & \\\nonumber
 &=\displaystyle \sum_{i=1}^{l}e_{2}^{\mathtt{S}_{i}}(I\mathtt{S}_{i})f_{i}+e_{1}(Q) & \\ \nonumber
& \leq \sum_{i=1}^{l}\left(\frac{(\mathrm{g}_{s}(I\mathtt{S}_{i}))^2+\mathrm{g}_{s}(I\mathtt{S}_{i})}{2}\right)f_{i}+e_{1}(Q) & \\ \nonumber
 & \leq\frac{1}{2}\left[ \sum_{i=1}^{l} \left(\mathrm{g}_{s}(I\mathtt{S}_{i})f_{i}\right)^{2}
 +\sum_{i=1}^{l}\left( \mathrm{g}_{s}(I\mathtt{S}_{i})f_{i}\right)\right]+e_{1}(Q) & \\ \nonumber
& = \frac{(\mathrm{g}_{s}(I)-e_{1}(Q))^{2}+(\mathrm{g}_{s}(I)-e_{1}(Q))}{2}+e_{1}(Q)  \text{ (by Equation (\ref{dcx}))}.
    \end{align*}
    Therefore, $e_{2}(I) \leq \dbinom{\mathrm{g}_{s}(I)-e_{1}(Q)+1}{2}+e_{1}(Q)$.
\end{proof}

As an immediate consequence, we obtain the main result of this section, which provides an upper bound for the second Hilbert coefficient of the maximal ideal of a Buchsbaum local ring with depth at least $d-1$.
\begin{theorem} \label{s2ficationmaximal}
  Let $(R,\m)$ be a Buchsbaum local ring of dimension $d \geq 2$ with $\depth R\geq d-1$. Let $Q$ be a minimal reduction of $\m$, then   $e_{2}(\m) \leq \dbinom{\mathrm{g}_{s}(\m)-e_{1}(Q)+1}{2}+e_{1}(Q).$
\end{theorem}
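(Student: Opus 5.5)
The plan is to reduce to dimension two and then apply Proposition~\ref{dc} with $I=\m$. The only hypothesis of Proposition~\ref{dc} that needs checking is $\m=\m\mathtt{S}$, and the reduction itself goes through superficial elements, as in the proof of Proposition~\ref{deviationbound}.

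\textbf{Reduction to $d=2$.} Induct on $d$, with base case $d=2$. Suppose $d\geq 3$. By Remark~\ref{remarksup}, choose $x\in Q$ superficial for both $Q$ and $\m$ and a non-zero-divisor of $R$; this is possible since $\depth R\geq d-1\geq 2$. Put $R'=R/(x)$, so $R'$ is Buchsbaum of dimension $d-1$ with $\depth R'\geq d-2$. The needed invariants pass to $R'$:
\begin{itemize}
\item $e_2(\m)=e_2(\m')$ and $e_1(Q)=e_1(Q')$ by Proposition~\ref{hibertcoeffsup}, since $2\leq d-2$ and $1\leq d-2$ when $d\geq 4$, while for $d=3$ we have $e_2$ in degree $d-1$ with correction term $\lambda(0:x)=0$;
\item $\mathrm{g}_s(\m)=\mathrm{g}_s(\m')$ by Lemma~\ref{lemmaa1}.
\end{itemize}
Also, $Q'$ is a minimal reduction of $\m'=\m R'$, which is the maximal ideal of $R'$. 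The induction hypothesis then gives the inequality.

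\textbf{Base case $d=2$.} Here $\depth R\geq 1$, and $R$ is Buchsbaum, so \cite[Theorem 4.2]{gghv} shows that $\mathtt{S}=\Hom_R(\m,\m)$ is the $S_2$-fication of $R$. Every $\varphi\in\mathtt{S}$ maps $\m$ into $\m$, so $\m\mathtt{S}=\mathtt{S}\m\subseteq \m$. The reverse inclusion is trivial because $1\in\mathtt{S}$. Hence $\m=\m\mathtt{S}$. Proposition~\ref{dc} applied to $I=\m$ with minimal reduction $Q$ now gives $e_2(\m)\leq \binom{\mathrm{g}_s(\m)-e_1(Q)+1}{2}+e_1(Q)$.

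\textbf{Main obstacle.} The delicate point is the identification $\mathtt{S}=\Hom_R(\m,\m)$, viewed inside the total ring of fractions so that $\m\mathtt{S}$ makes sense as an ideal of $\mathtt{S}$ contained in $\m$. This is exactly what \cite{gghv} provides under positive depth. In the induction step one must also check that $R'$ remains Buchsbaum; this holds because a parameter element of a Buchsbaum ring gives a Buchsbaum quotient.
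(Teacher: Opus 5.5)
Your proposal is correct and follows the paper's proof: induction on $d$ via a superficial non-zero-divisor $x$ (using Proposition~\ref{hibertcoeffsup} and Lemma~\ref{lemmaa1}), with the base case $d=2$ obtained from Proposition~\ref{dc} applied to $I=\m$. The differences are minor. You verify $\m=\m\mathtt{S}$ directly from $\mathtt{S}=\Hom_R(\m,\m)$, whereas the paper deduces it from \cite[Proposition 4.1]{gghv} and $\m H^1_{\m}(R)=0$. You are also more careful than the paper in taking $x\in Q$ superficial for both $Q$ and $\m$, which is exactly what Lemma~\ref{lemmaa1} and the equality $e_1(Q)=e_1(Q')$ require.
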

\begin{proof}
 We prove by induction on the dimension $d.$ For $d=2,$ let $\mathtt{S}$ be the $S_{2}$-fication of $R.$
Using \cite[Proposition 4.1]{gghv}  and  from \cite[Chapter 2, Proposition 2.1 (iii)]{sv}, $\mathfrak{m} H
    ^{1}_{\mathfrak{m}}(R) = 0$, we have that  $\m =\m\mathtt{S}.$ Therefore, the conclusion follows directly from Proposition \ref{dc}.

 Assume $d \geq 3$ and that our assertion holds for $d-1.$  Let $x$ be a superficial element for $\m.$ Set $R'=R/(x)$ and $\m'=\m R'.$ Then $R'$ is a Buchsbaum ring of dimension $d-1$ with $\depth R' \geq d-2.$ Hence,
 \begin{align*}
     e_{2}(\m)
     &=e_{2}(\m')  \text{ (by Proposition \ref{hibertcoeffsup})}\\
     & \leq \dbinom{\mathrm{g}_{s}(\m')-e_{1}(Q')+1}{2}+e_{1}(Q')  \text{ (from induction hypothesis)}\\
     &=\dbinom{\mathrm{g}_{s}(\m)-e_{1}(Q)+1}{2}+e_{1}(Q) \text{ (by Proposition \ref{hibertcoeffsup} and Lemma \ref{lemmaa1}).} &\qedhere
 \end{align*}
\end{proof}

The following example illustrates Proposition \ref{dc}.

\begin{example} 
   Let $S=\mathbb K[[X,Y,Z,W]],$ where $X,Y,Z,W$ are variables and let $\mathfrak a =(X,Y)$ and $\mathfrak b= (Z,W)$ be two ideals in $S.$     Let $R=S/\mathfrak a \cap \mathfrak b,$ then $R$ is a two-dimensional Buchsbaum ring with $\depth R=1.$   Further, let $x,y,z,w$ denote the images of $X,Y,Z,W$ in $R$ and $\m=(x,y,z,w)$ be the maximal ideal. We have the following short exact sequence:
   \begin{equation} \label{seqs}
       0\longrightarrow R \longrightarrow \dfrac{S}{\mathfrak a}\bigoplus \dfrac{S}{\mathfrak b} \longrightarrow  \dfrac{S}{\mathfrak a +\mathfrak b}   \longrightarrow 0.
   \end{equation}
   Let $I=I_{1}+I_{2}$ be an $\m$-primary ideal in $R$ where $I_{1} \subseteq (x,y)$ and $I_{2} \subseteq (z,w).$ Note that the $S_{2}$-fication $\mathtt{S}$ of $R$ is $\dfrac{S}{\mathfrak a}\bigoplus \dfrac{S}{\mathfrak b},$ so that $I=I\mathtt{S}.$  Tensoring (\ref{seqs}) with $R/I^{n},$ we get:
    \begin{equation*}
         0 \longrightarrow \frac{I^{n}\mathtt{S} \cap R}{I^{n}} \longrightarrow \dfrac{R}{I^{n}}\longrightarrow \frac{S}{\mathfrak a + I^{n}}\bigoplus \frac{S}{\mathfrak b+ I^{n}}\longrightarrow \frac{S}{\mathfrak a+ \mathfrak b+ I^{n}}\longrightarrow 0.
     \end{equation*}
     Since $I=I \mathtt{S}$, we have $\dfrac{I^{n}\mathtt{S} \cap R}{I^{n}}=0$ for all $n.$ Thus, we get:
     \begin{equation} \label{length}
         0 \longrightarrow R/I^{n}\longrightarrow \frac{S}{\mathfrak a + I^{n}}\bigoplus \frac{S}{\mathfrak b+ I^{n}}\longrightarrow \frac{S}{\mathfrak a+ \mathfrak b+ I^{n}}\longrightarrow 0.
     \end{equation}
     Thus, from the exact sequence (\ref{length}), we get $\lambda(R/I^{n})+\lambda_{R}(S/(\mathfrak a+ \mathfrak b+ I^{n}))=\lambda_{R}(S/\mathfrak a + I^{n}))+\lambda_{R}(S/( \mathfrak b+ I^{n})).$
     Note that $I^{n} \subseteq \mathfrak a+\mathfrak b$ for sufficiently large $n$.  Thus, for sufficiently large $n,$ we have the following equation:
    \begin{equation} \label{november}
      \displaystyle \sum_{i=0}^{2}\left[(-1)^{i}e_{i}(I)\binom{n+d-1-i}{d-i}\right]+1=\displaystyle \sum_{i=0}^{2}(-1)^{i}\left[e_{i}(I,S/\mathfrak a) +e_{i}(I,S/\mathfrak b)\right]\binom{n+d-1-i}{d-i}.
     \end{equation}
     On comparing the coefficients in (\ref{november}), we get 
     $e_{2}(I)+1=e_{2}(I,S/\mathfrak a)+e_{2}(I,S/\mathfrak b).$ Furthermore, from \cite[Example 4.7]{gghv} for any parameter ideal $Q$ in $R$,   $e_{1}(Q)=-1$, and
     $\lambda_{R}(S/(\mathfrak a+\mathfrak b))=\lambda\left(H^{1}_{\m}(R)\right)=-e_{1}(Q)$ because $R$ is Buchsbaum \cite[Chapter 1, Propositions 2.6 and 2.7]{sv}. 
         Thus,  $e_{2}(I)=e_{2}(I,S/\mathfrak a)+e_{2}(I,S/\mathfrak b)+e_{1}(Q).$  Consider, 
         \begin{align} \nonumber
           e_{2}(I,S/\mathfrak a) +e_{2}(I,S/\mathfrak b) &\leq   \binom{\mathrm{g}_{s}(I,S/\mathfrak a)+1}{2}+ \binom{\mathrm{g}_{s}(I,S/\mathfrak b)+1}{2} \text{ (from Corollary \ref{cml})}\\ \nonumber
             &= \frac{\mathrm{g}_{s}(I,S/\mathfrak a)(\mathrm{g}_{s}(I,S/\mathfrak a)+1)+\mathrm{g}_{s}(I,S/\mathfrak b)(\mathrm{g}_{s}(I,S/\mathfrak b)+1)}{2}\\ \nonumber
                       & \leq \frac{(\mathrm{g}_{s}(I,S/\mathfrak a)+\mathrm{g}_{s}(I,S/\mathfrak b))^{2}+\mathrm{g}_{s}(I,S/\mathfrak a)+\mathrm{g}_{s}(I,S/\mathfrak b)}{2} \\ \label{ddu}
            & = \binom{\mathrm{g}_{s}(I,S/\mathfrak a)+\mathrm{g}_{s}(I,S/\mathfrak b)+1}{2}.
         \end{align}
        Note that \begin{align} \nonumber
            \mathrm{g}_{s}(I,S/\mathfrak a)+\mathrm{g}_{s}(I,S/\mathfrak b)&= \lambda_{R}\left(\frac{S}{\mathfrak a+I}\right)-e_{0}(I,S/\mathfrak a)+e_{1}(I,S/\mathfrak a)+\lambda_{R}\left(\frac{S}{\mathfrak b+I}\right)-e_{0}(I,S/\mathfrak b)+e_{1}(I,S/\mathfrak b)\\ \nonumber
           &=\lambda(R/I)+\lambda_{R}\left(\frac{S}{\mathfrak a+ \mathfrak b+I}\right)-e_{0}(I)+e_{1}(I)\\
           & \leq \lambda(R/I)+\lambda_{R}\left(\frac{S}{\mathfrak a+ \mathfrak b}\right)-e_{0}(I)+e_{1}(I)\\ \nonumber
             & = \lambda(R/I)-e_{1}(Q)-e_{0}(I)+e_{1}(I)\\ \label{indra}
             & =\mathrm{g}_{s}(I)-e_{1}(Q).
        \end{align}
        From (\ref{ddu}) and (\ref{indra}), we get 
        $e_{2}(I) \leq \dbinom{\mathrm{g}_{s}(I)-e_{1}(Q)+1}{2}+e_{1}(Q).$
\end{example}
We conclude this section by recalling Example \ref{optimalexample} which  illustrates that the upper bound for  $e_{2}(\m)$ obtained in Theorem \ref{s2ficationmaximal} is sharp.
\begin{example}
    From Example \ref{optimalexample}, we have $\mathrm{g}_{s}(\m)=-1$, $e_{2}(\m)=-1$ and $e_{1}(Q)=-1.$ Therefore, from Theorem \ref{s2ficationmaximal}, we get $$-1=e_{2}(\m)\leq\binom{\mathrm{g}_{s}(\m)-e_{1}(Q)+1}{2}+e_{1}(Q)=-1.$$
\end{example}

\section{Hilbert Coefficients and the Depth of the Associated Graded Ring} \label{section6}
Throughout this section, let $(R, \m)$  be a Cohen-Macaulay local ring,   with an  infinite residue field and $I$   be an $\m$-primary ideal of $R$. 
In this section, we obtain interesting results on the interplay of the vanishing of certain Hilbert coefficients and the depth of the associated graded ring. 
We begin with  the vanishing of  $e_{2}(\mathcal{I})$ in  Cohen-Macaulay local rings of dimension $d \geq 2$.  

\begin{proposition} \label{e2equalszeroCM}
    Let $(R, \m)$ be a Cohen-Macaulay local ring of dimension $d \geq 2,$ $I$ an $\m$-primary ideal and $\mathcal{I}=\{I_{n}\}$ be an $I$-admissible filtration. Suppose $e_{2}(\mathcal{I})=0,$ then $\depth G(\mathcal{I}) \neq d-1.$
\end{proposition}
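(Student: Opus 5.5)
The plan is to argue by contradiction. Assume $e_2(\mathcal{I})=0$ and $\depth G(\mathcal{I})=d-1$. Reduce to dimension one, where $e_2$ is a sum of nonnegative terms. Its vanishing then forces $G$ of the one-dimensional reduction to be Cohen--Macaulay, and lifting back gives $\depth G(\mathcal{I})=d$, a contradiction.

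\textbf{Step 1: reduction to dimension one.} Since $R$ is Cohen--Macaulay with infinite residue field and $\depth G(\mathcal{I})=d-1\geq 1$, I choose a superficial sequence $x_1,\ldots,x_{d-1}$ for $\mathcal{I}$ that is regular in $R$ and whose initial forms $x_1^*,\ldots,x_{d-1}^*$ form a regular sequence in $G(\mathcal{I})$. This is the standard choice in the filtration setting (Huckaba--Marley), extending Remark~\ref{remarksup}.

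Set $J=(x_1,\ldots,x_{d-1})$, $R_1=R/J$ and $\mathcal{I}_1=\{I_nR_1\}$. Because the initial forms are regular in $G(\mathcal{I})$, we have $G(\mathcal{I})/(x_1^*,\ldots,x_{d-1}^*)G(\mathcal{I})\cong G(\mathcal{I}_1)$ and $HS_{\mathcal{I}}(t)=HS_{\mathcal{I}_1}(t)/(1-t)^{d-1}$. The $h$-polynomials therefore coincide, so $e_i(\mathcal{I})=e_i(\mathcal{I}_1)$ for all $i$. In particular $e_2(\mathcal{I}_1)=0$. This is the filtration analogue of Proposition~\ref{hibertcoeffsup}(v).

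\textbf{Step 2: the one-dimensional computation.} $R_1$ is a one-dimensional Cohen--Macaulay ring and $\mathcal{I}_1$ is an admissible filtration. Let $x$ be a superficial element for $\mathcal{I}_1$ generating a minimal reduction; it is $R_1$-regular, so $(0:x)=0$. The filtration version of Lemma~\ref{formulahilbcoeffindimone} then gives
\[
e_2(\mathcal{I}_1)=\sum_{n\geq 1} n\,\lambda\bigl(I_{n+1}R_1/xI_nR_1\bigr)\ge 0 .
\]
Since $e_2(\mathcal{I}_1)=0$, we get $I_{n+1}R_1=xI_nR_1$ for all $n\ge1$.

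\textbf{Step 3: $G(\mathcal{I}_1)$ is Cohen--Macaulay.} It suffices to show $x^*$ is regular on $G(\mathcal{I}_1)$, i.e.\ $I_{n+1}R_1\cap xR_1=xI_nR_1$ for all $n\geq 0$. Writing $I_n$ for $I_nR_1$:
\begin{itemize}
\item For $n\ge1$: if $xa\in I_{n+1}=xI_n$, then $xa=xb$ with $b\in I_n$, and since $x$ is a nonzerodivisor, $a=b\in I_n$.
\item For $n=0$: $I_1\cap xR_1=xR_1=xI_0$, because $x\in I_1$.
\end{itemize}
Hence $\depth G(\mathcal{I}_1)=1$, so $G(\mathcal{I}_1)$ is Cohen--Macaulay.

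\textbf{Step 4: lifting.} Sally's descent, valid for admissible filtrations, applies since $x_1^*,\ldots,x_{d-1}^*$ is $G(\mathcal{I})$-regular and $G(\mathcal{I}_1)$ is Cohen--Macaulay. It lifts the Cohen--Macaulayness back, so $\depth G(\mathcal{I})=d$. This contradicts $\depth G(\mathcal{I})=d-1$.

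The main obstacle is Step~1 together with the formula in Step~2 in the filtration setting. One must make sure the superficial elements can be chosen with regular initial forms, and that the one-dimensional $e_2$ formula holds for admissible filtrations, not just $I$-adic ones. I would cite Huckaba--Marley for both: their filtration version of Corollary~\ref{e_i}, with $w_n=0$ and $U=0$ in the Cohen--Macaulay case, gives exactly the displayed formula.
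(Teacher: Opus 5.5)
Your proof is correct and is essentially the paper's argument. The paper uses the Rossi--Valla formula $e_2(\mathcal{I})=\sum_{n\geq 1} n\,\lambda(I_{n+1}/JI_n)$, valid once $\depth G(\mathcal{I})\geq d-1$, to get $I_{n+1}=JI_n$ for all $n\geq 1$ (i.e.\ $r_J(\mathcal{I})\leq 1$), and then cites Rossi--Valla's Theorem 2.9(3) to conclude that $G(\mathcal{I})$ is Cohen--Macaulay. You run the same computation modulo $(x_1,\ldots,x_{d-1})$ and check Cohen--Macaulayness by hand via Valabrega--Valla; your Step~4 does not even need Sally's descent, since $x_1^*,\ldots,x_{d-1}^*$ is already $G(\mathcal{I})$-regular and $G(\mathcal{I})/(x_1^*,\ldots,x_{d-1}^*)\cong G(\mathcal{I}_1)$ has depth $1$, which gives $\depth G(\mathcal{I})=d$ directly.
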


\begin{proof}
     Let us assume the contrary. 
    Suppose  $\depth G(\mathcal{I}) = d-1,$ then  from  \cite[Theorem 2.5]{rv}, we have 
    \begin{equation} \label{expressionfore2}
0 =  e_{2}(\mathcal{I})
 =\displaystyle \sum_{n \geq 1} 
        \binom{n}{1}\lambda\left(\frac{I_{n+1}}{JI_{n}}\right).
    \end{equation}
Hence   $I_{n+1}=JI_{n}$ for all $n \geq 1,$ which implies that  $r_{J}(\mathcal{I}) \leq 1$.  Thus from   \cite[Theorem 2.5]{rv}, $e_{0}(\mathcal{I})-e_{1}(\mathcal{I})=\lambda(R/J)-\lambda(I_{1}/J)=\lambda(R/I_{1})$. From \cite[Theorem 2.9(3)]{rv}, $G(\mathcal{I})$ is Cohen-Macaulay, which leads to a contradiction.  Hence $\depth G(\mathcal{I}) \neq d-1.$
\end{proof}

As a consequence, we have the following corollary for the $I$-adic case. We also recover a version of  Narita's result \cite[Theorem 1]{narita} on the $\depth G(I^{n})$ for sufficiently large $n$.

\begin{corollary} \label{core2equalszeroCM}
     Let $(R, \m)$ be a Cohen-Macaulay local ring of dimension $d \geq 2,$ and $I$ an $\m$-primary ideal. Suppose $e_{2}(I)=0.$ Then 
     \begin{enumerate}[\normalfont(i)]
        \item \label{papa-1}
        $\depth G(I) \neq d-1,$ 

        \item \label{papa-3}\cite[Theorem 1]{narita}
        If $d=2$, then  $G(I^{n})$ is Cohen-Macaulay for sufficiently large $n.$ 
        
\item \label{papa-2}
Let $d \geq 3.$ Suppose  $e_3(I) \not =0$. Then 
\begin{enumerate}[\normalfont(a)]
\item 
\label{papa-2-1}
  $\depth G(I) \leq  d-2$,

\item \label{papa-2-2}
$1 \leq \depth \widetilde{G}(I)\leq d-2.$
\end{enumerate}
\end{enumerate}
\end{corollary}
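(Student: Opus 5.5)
Part (i) is immediate from Proposition \ref{e2equalszeroCM} applied to the $I$-adic filtration $\mathcal{I}=\{I^n\}_{n\geq 0}$.

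For part (ii), let $d=2$ and use the Ratliff--Rush filtration $\mathcal{F}=\{\widetilde{I^n}\}$. This is an $I$-admissible filtration, and by Definition \ref{ratliffrushdef} we have $\widetilde{e_i}(I)=e_i(I)$, so $e_2(\mathcal{F})=0$. Since $R$ is Cohen--Macaulay of positive dimension, $\depth \widetilde{G}(I)\geq 1=d-1$. Proposition \ref{e2equalszeroCM} forbids depth exactly $d-1$, so $\widetilde{G}(I)$ is Cohen--Macaulay. In the proof of Proposition \ref{e2equalszeroCM}, depth $d-1$ together with $e_2=0$ forces $r_J(\mathcal{F})\leq 1$. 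The same reasoning in the Cohen--Macaulay case gives
\[\widetilde{I^{n+1}}=J\widetilde{I^{n}} \quad \text{for all } n\geq 1,\]
with $J$ a minimal reduction. Now choose $N$ with $\widetilde{I^n}=I^n$ for all $n\geq N$. For $m\geq N$ we get $(I^m)^{k+1}=J^m (I^m)^k$ for all $k\geq 1$, by iterating the displayed equality. Moreover $(I^m)^k$ is Ratliff--Rush closed for every $k\geq 1$. Hence $G(I^m)$ is the associated graded ring of a filtration that agrees with $\{\widetilde{(I^m)^k}\}$, so its depth is positive. Since $I^m$ has reduction number at most $1$, the Huneke--Ooishi/Valla criterion (Northcott equality, i.e.\ \cite[Theorem 2.9]{rv}) gives that $G(I^m)$ is Cohen--Macaulay. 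The delicate point here is checking that $\widetilde{(I^m)^k}=(I^m)^k$ holds also for small $k$. This is true because $I^{mk}$ with $mk\geq N$ is Ratliff--Rush closed, and $\widetilde{(I^m)^k}\subseteq \widetilde{I^{mk}}$.

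For (iii)(a), assume $d\geq 3$ and $e_3(I)\neq 0$. If $\depth G(I)\geq d-1$, then by (i) $G(I)$ is Cohen--Macaulay. In that case the formula $e_2(I)=\sum_{n\geq 1} n\lambda(I^{n+1}/JI^n)$ from \cite[Theorem 2.5]{rv} forces $I^2=JI$, so the $h$-polynomial has degree at most $1$ and $e_3(I)=0$, a contradiction. Hence $\depth G(I)\leq d-2$.

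For (iii)(b), the lower bound $\depth \widetilde{G}(I)\geq 1$ is standard for $\grade I>0$. For the upper bound, apply the same argument to $\mathcal{F}$. We have $e_i(\mathcal{F})=e_i(I)$, so $e_2(\mathcal{F})=0$ and $e_3(\mathcal{F})\neq 0$. Proposition \ref{e2equalszeroCM} excludes depth $d-1$. If $\widetilde{G}(I)$ were Cohen--Macaulay, the filtration version of \cite[Theorem 2.5]{rv} would give $\widetilde{I^{n+1}}=J\widetilde{I^n}$ for $n\geq 1$, so the $h$-polynomial of $\mathcal{F}$ would have degree at most $1$, forcing $e_3(\mathcal{F})=0$, a contradiction. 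Hence $\depth \widetilde{G}(I)\leq d-2$.

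I expect the main obstacle to be the passage in (ii) from the Ratliff--Rush filtration to $G(I^m)$: one has to transfer the reduction-number-one property correctly and check the depth carefully.
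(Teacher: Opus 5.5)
Your arguments for (i), (iii)(a) and (iii)(b) are correct and essentially the paper's. For (iii)(a) the paper quotes \cite[Corollary~2(2)]{marley} to get $e_3(I)=0$, which is exactly what you extract from the formula in \cite[Theorem~2.5]{rv}.

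The gap is in the last step of (ii). What you prove is $(I^m)^{k+1}=J^m(I^m)^k$, i.e.\ reduction number at most one \emph{with respect to $J^m$}. For $m\geq 2$ and $J=(a,b)$, the ideal $J^m=(a,b)^m$ needs $m+1$ generators, so it is not a minimal reduction of $I^m$. The Huneke--Ooishi/Valla criterion \cite[Theorem~2.9]{rv} applies only to a reduction generated by a maximal superficial sequence, i.e.\ by $d=2$ elements. Small reduction number with respect to an arbitrary reduction says nothing, since every ideal is a reduction of itself. So, as written, the conclusion that $G(I^m)$ is Cohen--Macaulay does not follow. The delicate point is the minimality of the reduction, not the Ratliff--Rush closedness of $(I^m)^k$, which you handle correctly.

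There are two easy repairs.
\begin{enumerate}
\item[(1)] Iterating $\widetilde{I^{n+1}}=J\widetilde{I^{n}}$ gives $I^m=\widetilde{I^m}=J^{m-1}\widetilde{I}$ and $I^{2m}=J^{2m-1}\widetilde{I}$ for $m\geq N$. The monomial identity $(a^m,b^m)(a,b)^{m-1}=(a,b)^{2m-1}$ holds: write $a^ib^{2m-1-i}$ as $a^m\cdot a^{i-m}b^{2m-1-i}$ if $i\geq m$, and as $b^m\cdot a^ib^{m-1-i}$ if $i\leq m-1$. Hence $(I^m)^2=(a^m,b^m)I^m$, and $(a^m,b^m)$ is a genuine two-generated minimal reduction of $I^m$, so the Huneke--Ooishi criterion now applies.
\item[(2)] Alternatively, follow the paper. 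In dimension two, $e_2(I^m)=P_{I^m}(0)=P_I(0)=e_2(I)=0$; this is \cite[Lemma~2.12]{tjm}. You have already shown $\depth G(I^m)>0$ for $m\geq N$, which is precisely the fact the paper quotes without proof. Part (i) applied to $I^m$ then forces $\depth G(I^m)=2$.
\end{enumerate}
The second route needs neither the Cohen--Macaulayness of $\widetilde{G}(I)$ nor any transfer of reduction numbers.
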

\begin{proof}
 (\ref{papa-1}) Follows directly from Proposition \ref{e2equalszeroCM}. 
    
 (\ref{papa-3}) From \cite[Lemma 2.12]{tjm}, we have $e_{2}(I)=e_{2}(I^{n})=0$ for all $n \geq 1.$ Since  $\depth G(I^{n}) >0$ for all sufficiently large $n$,  thus, the conclusion follows from (i).

 (\ref{papa-2})(a) By  (\ref{papa-1}),  we have
 $\depth G(I) \neq d-1.$  If   $\depth G(I) = d$, then since $e_2(I)=0$,  by \cite[Corollary~2 (2)]{marley},   we get  $e_3(I)=0$, which leads to a contradiction. Therefore, $\depth G(I) \leq d-2.$
 
  (\ref{papa-2})(b) Suppose  $\depth \widetilde{G}(I)\geq d-1.$ From \cite[Theorem 2.5]{rv}, 
    we have 
    $\widetilde{e}_{2}(I)=\displaystyle \sum_{n\geq1}\binom{n}{1}\lambda\left(\frac{\widetilde{I^{n+1}}}{J\widetilde{I^{n}}}\right).$ Since $\widetilde{e}_{2}(I) = e_2(I) =0$, we get   that 
    $\widetilde{I^{n+1}}= J\widetilde{I^{n}}$ for all $n \geq 1.$ Again from \cite[Theorem 2.5]{rv},
    we have $\widetilde{e}_{3}(I)=0$ which   implies that   $e_{3}(I)=\widetilde{e}_{3}(I)=0$ and this leads to a  contradiction. Therefore,  $\depth \widetilde{G}(I)\leq d-2.$ \qedhere
\end{proof}

As an immediate corollary, we recover Itoh's result \cite[Corollary 5]{itoh}

\begin{corollary}\cite[Corollary 5]{itoh} Let $(R, \m)$ be a two dimensional Cohen-Macaulay local ring and $I$ $\m$-primary ideal.
Then the following assertions are equivalent.
\begin{enumerate} [\normalfont(i)]
    \item $e_{1}(I)-e_{0}(I)+\lambda(R/\widetilde{I})=0.$
    \item $\widetilde{I}^{2}=J\widetilde{I}.$
    \item $\widetilde{I^{2}}=J\widetilde{I}.$
    \item $\widetilde{I^{n+1}}=J^{n}\widetilde{I}$ for every $n \geq 1.$
    \item $e_{2}(I)=0.$
\end{enumerate}
\end{corollary}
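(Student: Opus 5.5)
The natural route is a cycle of implications built on the results already in hand: Corollary \ref{core2equalszeroCM}, Proposition \ref{newandfinalproofoflowerbound}(ii), and the Ratliff--Rush filtration $\mathcal{F}=\{\widetilde{I^n}\}$. Throughout, $J$ is a minimal reduction of $I$ generated by a superficial sequence, and $d=2$. Since $R$ is Cohen--Macaulay of dimension $2$, $\widetilde{I^n}$ is Ratliff--Rush closed with $\grade$ positive. Hence $\depth \widetilde{G}(I)\geq 1$, and $\widetilde{e}_i(I)=e_i(I)$ by Definition \ref{ratliffrushdef}. The filtration $\mathcal{F}$ is $I$-admissible with $\depth G(\mathcal{F})\geq d-1=1$, so \cite[Theorem 2.5]{rv} gives
\[
e_2(I)=\widetilde{e}_2(I)=\sum_{n\geq 1} n\,\lambda\bigl(\widetilde{I^{n+1}}/J\widetilde{I^n}\bigr),\qquad
e_1(I)=\sum_{n\geq 0}\lambda\bigl(\widetilde{I^{n+1}}/J\widetilde{I^n}\bigr).
\]
Combining the second formula with $e_0(I)=\lambda(R/J)$ yields
\[
e_1(I)-e_0(I)+\lambda(R/\widetilde{I})=\sum_{n\geq 1}\lambda\bigl(\widetilde{I^{n+1}}/J\widetilde{I^n}\bigr).
\]
These two formulas are the main tools.

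First, (v)$\Leftrightarrow$(iii)$\Leftrightarrow$(iv). By the first formula, $e_2(I)=0$ holds iff $\widetilde{I^{n+1}}=J\widetilde{I^n}$ for all $n\geq 1$. This clearly implies (iii) (take $n=1$) and (iv) (iterate). Conversely, (iv) gives $\widetilde{I^{n+1}}=J^n\widetilde I\subseteq J\widetilde{I^n}$, so all terms vanish. For (iii)$\Rightarrow$(iv), note that the reduction number of $\mathcal{F}$ being $\le 1$ should propagate once $\depth G(\mathcal{F})\geq1$. To prove this, pass modulo a superficial element $x$ that is regular on $G(\mathcal{F})$. In the resulting one-dimensional Cohen--Macaulay ring, $\mathcal{F}'$-reduction number $\le 1$ at level $2$ forces it at all levels, since $\lambda$ of the successive quotients is nonincreasing. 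Then lift back using the fact that $x^*$ is regular on $G(\mathcal{F})$ (Sally's machine).

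Second, (i)$\Leftrightarrow$(v). By the displayed identity, (i) holds iff $\widetilde{I^{n+1}}=J\widetilde{I^n}$ for all $n\ge1$, which is the same as $e_2(I)=0$. As a sanity check, this also follows from $e_2\geq \sum \lambda(\cdot)$ together with Proposition \ref{newandfinalproofoflowerbound}(ii) applied to $\mathcal F$.

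Finally, (ii). Since $\widetilde I^2\subseteq\widetilde{I^2}$, we have (iii)$\Rightarrow$(ii) because $J\widetilde I\subseteq \widetilde I^2\subseteq\widetilde{I^2}=J\widetilde I$. For (ii)$\Rightarrow$(iii), first show that $\widetilde I$ has reduction number $\le1$ with respect to $J$. In the $2$-dimensional Cohen--Macaulay ring this makes $G(\widetilde I)$ Cohen--Macaulay by Huneke--Ooishi, so the powers $\widetilde I^n$ are Ratliff--Rush closed. Moreover, $\widetilde I^{n}$ and $I^n$ share the same Ratliff--Rush closure for large $n$, since $\widetilde I^n\subseteq\widetilde{I^n}$ and $I^n\subseteq \widetilde I^n$. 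It follows that $\widetilde{I^2}=\widetilde{\widetilde I^2}=\widetilde I^2=J\widetilde I$.

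The main obstacle is the step (iii)$\Rightarrow$(iv), or equivalently (ii)$\Rightarrow$(iii). Both require controlling the entire filtration from a single level. I would attempt this first via the depth of $G(\mathcal F)$ and reduction to dimension one, as described above.
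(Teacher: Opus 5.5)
\textbf{There is a genuine gap: your cycle of implications does not close.}

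What you prove soundly is (iv)$\Leftrightarrow$(v)$\Rightarrow$(iii), (i)$\Leftrightarrow$(v), and (ii)$\Leftrightarrow$(iii). The only arrow leading from (ii) or (iii) back to (i), (iv) or (v) is your sketch of (iii)$\Rightarrow$(iv), and that sketch rests on a false claim.

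Write $J=(x,y)$ and let $F'_n$ be the image of $\widetilde{I^n}$ in $R'=R/(x)$. Then $\lambda(F'_{n+1}/yF'_n)=e_0(I)-\lambda(F'_n/F'_{n+1})$. So ``the successive quotients are nonincreasing'' is the assertion that the Hilbert function of $G(\mathcal F')$ is nondecreasing. This fails in general: already for $\m$-adic filtrations there are one-dimensional Cohen--Macaulay rings with decreasing Hilbert function. Nor can you propagate $F'_2=yF'_1$ to higher degrees by multiplying, as one does for $I$-adic filtrations, because $F'_3\neq F'_1F'_2$ in general. As written, (iii)$\Rightarrow$(iv) is unproved, and with it the equivalence of (ii) and (iii) with the other assertions.

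\textbf{The repair is already inside your treatment of (ii).} From (iii) you get (ii), so $\widetilde I^{\,2}=J\widetilde I$. By Huneke--Ooishi, $G(\widetilde I)$ is Cohen--Macaulay, so $\depth G(\widetilde I)>0$ and every power $\widetilde I^{\,m}$ is Ratliff--Rush closed. Your comparison of high powers ($\widetilde I^{\,m}=I^m$ for $m\gg0$) gives $\widetilde{(\widetilde I^{\,m})}=\widetilde{I^m}$ for every $m$, not just $m=2$. Hence $\widetilde{I^{n+1}}=\widetilde I^{\,n+1}=J^n\widetilde I$ for all $n\geq 1$, which is (iv).

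Equivalently, Huneke--Ooishi gives $e_0(\widetilde I)-e_1(\widetilde I)=\lambda(R/\widetilde I)$. Since $\widetilde I^{\,m}=I^m$ for $m\gg0$, we have $e_i(\widetilde I)=e_i(I)$, and this is (i). That is precisely the link (i)$\Leftrightarrow$(ii) the paper takes from Huneke. The rest of the paper's cycle, (i)$\Rightarrow$(v)$\Rightarrow$(iv)$\Rightarrow$(iii)$\Rightarrow$(ii), matches yours.

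Your derivation of (i)$\Leftrightarrow$(v) directly from the two Huckaba--Marley-type formulas for $e_1$ and $e_2$ of the Ratliff--Rush filtration is correct. It is slightly more economical than the paper's appeal to Proposition~\ref{e2equalszeroCM}.
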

\begin{proof}
Note  that (iv)$\iff$(v) follows directly from equation (\ref{expressionfore2}).  We now show (v)$\iff$(i). Suppose $e_{2}(I)=0.$ Note that  $e_{i}(I)=e_{i}(\mathcal{I})$ for $0\leq i\leq 2$, where $\mathcal{I}=\{\widetilde{I^{n}}\}_{n\geq1}$. From Proposition \ref{e2equalszeroCM}, we have that $\depth G(\mathcal{I}) \neq 1$ as  $0=e_{2}(I)=e_{2}(\mathcal{I}).$ Since $\depth G(\mathcal{I}) \geq 1,$ thus, $G(\mathcal{I})$ is Cohen-Macaulay, this implies $e_{1}(I)-e_{0}(I)+\lambda(R/\widetilde{I})=0.$ Conversely, if $e_{1}(I)-e_{0}(I)+\lambda(R/\widetilde{I})=0,$ then $\widetilde{I^{n+1}}=J\widetilde{I^n}$ for all $n \geq 1.$ Thus, $e_{2}(I)=0.$
From \cite{huneke}, we have (i)$\iff$(ii).  The implication (iv)$\implies$(iii) follows trivially. For the implication (iii)$\implies$(ii), note that $J\widetilde{I} \subseteq \widetilde{I}^{2}\subseteq \widetilde{I^{2}}=J\widetilde{I}.$ Therefore, $\widetilde{I^{2}} =J\widetilde{I}.$
\end{proof}

\allowdisplaybreaks
The following example gives us an infinite class of ideals $I$ satisfying    $e_{2}(I)=0$ and $G(I^{n})$ is Cohen-Macaulay for sufficiently large $n.$

\begin{example} 
\label{example e2=0}
Let  $R =\kk [[x,y]]$, where $x,y$ are variables and $\mathbb K$ is any field.  Let   $r \geq 4$ be any integer,  and let $I_{r} = (x^r, x^{r-1}y, xy^{r-1}, y^r)$. 
Then the following are true:
\begin{enumerate} [(i)]
\item \label{example e2=0-1}$e_{2}(I_{r})=0,$
\item \label{example e2=0-2} $\depth G(I_{r})=0,$
\item \label{example e2=0-3}$G(I_{r}^{n})$ is Cohen-Macaulay for sufficiently large $n.$
  \end{enumerate}
  \end{example}
  \begin{proof}
 (\ref{example e2=0-1}) Note that for all $n \geq r-1, $ we have
    \begin{align*}
        I^{n}_{r}&=(x,y)^{n}(x^{r-1}+y^{r-1})^{n}
        = (x,y)^{n}\left(\displaystyle \sum_{i=0}^{n}(x^{r-1})^{(n-i)}(y^{r-1})^{i}\right)
         =(x,y)^{n}(x,y)^{(r-1)n} = (x,y)^{nr}.
    \end{align*}
    Therefore, for all $n \geq r-1$,
    \begin{eqnarray*}
    \lambda \left( \frac{R}{I^{n}_{r}}\right)
    = \lambda \left( \frac{R}{(x,y)^{nr}}\right)
     = \binom{nr + 1 }{2}
     = r^2 \binom{n+1}{2} - \binom{r}{2}n
  \end{eqnarray*}
  Hence $e_2(I_r)=0$. 
  
(\ref{example e2=0-2}) We claim that  $x^{r-2}y^{r-2} \in \widetilde{I_{r}} \setminus I_{r}$. One can verify that 
 \begin{eqnarray*}
      x^r ( x^{r-2}y^{r-2}) = (x^{r-1}y)^2 y^{r-4} \in I_{r}^2\\
      x^{r-1}y ( x^{r-2}y^{r-2}) = x^{r} (x y^{r-1}) x^{r-4} \in I_{r}^2.
       \end{eqnarray*}
       By symmetry, $y^r ( x^{r-2}y^{r-2}) \in I_{r}^2$ and $xy^{r-1} ( x^{r-2}y^{r-2})  \in I_{r}^2.$ Therefore, 
       $x^{r-2} y^{r-2}I_{r} \subseteq I_{r}^2$. 
Hence $x^{r-2}y^{r-2} \in  (I_{r}^2 : I_{r}) \subseteq \widetilde{I_{r}},$ but $x^{r-2}y^{r-2} \notin I_{r},$ which implies that   $\widetilde{I_{r}} \not = I_{r}$. Thus, $\depth G(I_{r}) = 0$. 

 (\ref{example e2=0-3}) Note that for all $k \geq r-1,$ we have $(I_r^{k+1}: I^k) = (x,y)^{r},$ which gives $\widetilde{I_r} = (x,y)^r,$ this implies $\widetilde{I_{r}}$ is a complete ideal. Therefore, $\widetilde{I_{r}^n}=\left(\widetilde{I_{r}}\right)^{n}=(x,y)^{nr}$ for all $n \geq 1.$  Hence from   \cite[Corollary~5.4]{lipman-teissier} and  \cite[Theorem~4.2]{john-verma}, $G\left(\left(\widetilde{I_{r}}\right)^{n}\right)$ is Cohen-Macaulay for all $n \geq 1$.  Furthermore, $\left(\widetilde{I_{r}}\right)^{n}=\widetilde{I_{r}^{n}}=I_{r}^{n}$ for $n$ large. Thus, $G(I_{r}^{n})$ is Cohen-Macaulay for sufficiently large $n.$ \qedhere
\end{proof}

By  examples we show that  for $d=3$, if $e_2(I)=0$ and $e_3(I) \neq 0$ for any $\mathfrak m$-primary ideal $I,$ then  $\depth G(I)$ can be either $0$ or $1$.  
\begin{example}
\label{e2=0, e3 not 0}
Let  $R =\kk [[x,y,z]],$ where $x,y,z$ are variables and $\mathbb K$ is any field.
   Let $I =  (x^3-y^3,  y^{3}-z^{3}, yz, xz, xy^{2})$. Then the following are true:
   \begin{enumerate} [\normalfont (i)]
     \item
\label{e2=0, e3 not 0-1}        $\depth G(I) = 0$. 

       \item
         \label{e2=0, e3 not 0-2}
       Let $K = I + (x^3) $ and $L = K +  (x^2y) = (x,y)^3 + z(x,y,z^2)$. Then 
   \begin{enumerate}
      \item 
   \label{e2=0, e3 not 0-2-a}
       $L = \overline{I} = \overline{K}$. 
        \item
           \label{e2=0, e3 not 0-2-b}
        $e_0(L) =15 = e_0(I) = e_0(K)$,  $e_1(L)= e_1(I) = e_1(K) = 7 $,  $e_2(L) = e_2(I) = e_2(K)= 0$,  $e_3(L) =0$ and $e_3(I) = e_3(K) =-1$.

        \item
         \label{e2=0, e3 not 0-2-c}
       $K = \widetilde{I}$,   $\depth~G(K)=1$ and  $G(L)$ is Cohen-Macaulay. 
     \end{enumerate}
   \end{enumerate}
  \end{example}
  \begin{proof}
 (\ref{e2=0, e3 not 0-1}) 
 Since $x^3 \in (I^3: I^2)\backslash I$, therefore,  $I$ is a not Ratliff-Rush closed. Hence $\depth G(I)=0$.  
  
 (\ref{e2=0, e3 not 0-2})(a) Let $S_1 = k[x, y_1, z_1]_{(x,y_1, z_1)}$ where $y_1 = y/z$ and $z_1 = z/x$. Then $I S_1  \cap R = (x^3, xz)S_1 \cap R = \m^3 + z \m$. Let  $S_2 = k[x_1, y, z_1]_{(x_1, y, z_1)}$. Then $IS_2 \cap R = (y^3 , yz)S_2 \cap R = \m^3 + z\m$.  Let  $S_3 = k[x_1, y_1, z]_{(x_1, y_1, z)}$. 
 Then $I S_3 \cap R = (xz, yz, z^3) \cap R = \m^3 + (x,y) \m$.  
Hence  $L =  \cap_{i=1}^{3}(IS_i \cap R = ((x,y,z)^3+ z (x,y,z)) \cap ((x,y,z)^3 + (x,y)(x,y,z))$.  Since $IS_i \cap R$ are integrally closed  ideals for all $i=1,2,3$ we get that $L$   is integrally closed. One can verity that $I \subseteq L \subseteq \overline {K} = \overline{I}$ which implies that  $L = \overline{I}$.

(\ref{e2=0, e3 not 0-2})(b)  By induction on $n$, we can show the following:  
 \begin{eqnarray} 
 \label{powers of K}
     K^n 
  &= &\sum_{i=0}^{n-1} z^{3n-2i} (x,y)^i + z^n    (x,y)^n
     + \sum_{i=1}^{n-1} z^{n-i} (x,y)^{n+2i}
     + (x^3, xy^2, y^3)^n\\ \label{powers of L}
          L^n 
& =&\sum_{i=0}^{n-1} z^{3n-2i} (x,y)^i   + z^n (x,y)^n
     + \sum_{i=1}^{n-1} z^{n-i} (x,y)^{n+2i} + (x,y)^{3n}\\ \label{powers of ln plus z}
     L^n + (z)
&= &(x,y)^{3n} + (z). 
\end{eqnarray}

Put $ L_{1,n} 
:= \left(  (z^n)
     + \displaystyle\sum_{i=1}^{n} (z^{n-i} x^{n+2i}) \right)$ and $ L_{2,n}
:=  \left(  (x^n) + \displaystyle \sum_{i=0}^{n-1} (z^{3n-2i} x^i)
    \right).$

    Then  we have
     \begin{eqnarray*}
 \left(  L_{1,n} + (y) \right) + 
     \left(  L_{2,n}    + (y) \right)
     =(z^n, x^n,y)
   \end{eqnarray*} 
   
    Consider  $L_{1,n}$ and $L_{2,n}$ ideals in $k[x,z]_{(x,z)}$. Then  both $L_{1,n}$ and $L_{2,n}$ are  contracted ideals of $\m$-adic order $n$,
    \cite[Proposition~2.3]{huneke-complete}. Put $z_1 = z/x$ and $T_1= k[x,z_1]_{(x,z_1)}$.
   Then   the strict transform of $L_{1,n}$  in $T_1$ is $L_{1,n}^{T_1} = (z_1 ^n) + \sum_{i=1}^{n} (z_1^{n-i} x^{n+i})$.  Put  $z_2 = z_1/x$ and $T_2= k[x,z_2]_{(x,z_2)}$.
   Then   the strict transform of $L_{1,n}^{T_2}$  in $T_2$ is $L_{1,n}^{T_2} = (z_2^n, x^n)$. 
Hence  on can apply the proof of  Lemma~3.9 in \cite{john-verma} for  the first quadratic transform of a contracted ideal to get
\begin{eqnarray*}
    \lambda \left( \frac{k[x,z]_{(x,z)}}{L_{1,n}}\right) 
=  \binom{n+1}{2} +  \lambda \left( \frac{T_1}{L_{1,n}^{T_1}}\right) 
= 2 \binom{n+1}{2} + \lambda \left( \frac{T_2}{L_{1,n}^{T_2}}\right) 
= 2 \binom{n+1}{2} + n^2.
 \end{eqnarray*}
Similarly one can show that 
 \begin{eqnarray*}
    \lambda \left( \frac{k[x,z]_{(x,z)}}{L_{2,n}}\right) 
     = 2 \binom{n+1}{2} + n^2.
    \end{eqnarray*}
    Since
 \begin{eqnarray}
    \label{formula L^n} \nonumber
  L^n + (y) 
 &=&\left(  \sum_{i=0}^{n-1} (z^{3n-2i} x^i)  + (z^n x^n) 
+     \sum_{i=1}^{n} (z^{n-i} x^{n+2i}) + (y) \right)
     \\ \nonumber
&=& \left(  (z^n)
 +    \sum_{i=1}^{n} (z^{n-i} x^{n+2i}) + (y) \right)
     \cap \left(  (x^n) +  \sum_{i=0}^{n-1} (z^{3n-2i} x^i) + (y) \right)\\ 
  &=&   (L_{1,n} +  (y) ) \cap (L_{2,n} + (y)),
      \end{eqnarray}
we get 
\begin{align}
\label{length ln+y}  \nonumber
    \lambda \left( \frac{R}{L^{n} + (y)} \right) 
    &= \lambda \left( \frac{R}{L_{1,n} + (y)} \right)  
    + \lambda \left( \frac{R}{L_{2,n} + (y)} \right) 
    - \lambda \left( \frac{R}{ L_{1,n} + L_{2,n} + (y)} \right)\\  \nonumber
    &= 2 n^2 + 4\binom{n+1}{2} - n^2\\
    &= n^2 +  4\binom{n+1}{2}.
\end{align}
     From  (\ref{powers of K}) and (\ref{powers of L}) we get $L^n = K^n + (x^{3n-1}y)$. Hence  it is enough to compute $\lambda(R/L^n)$. 
 For all $n \geq 2$,  \normalsize
\begin{eqnarray} \nonumber
     L^n : (yz) 
     &=& (z^{3n} :(yz)) + \sum_{i=1}^{n-1} (z^{3n-2i} (x,y)^{i} : (yz) 
     +  (z^n (x,y)^n : (yz))\\ \nonumber
     & &\quad
     + \sum_{i=1}^{n-1} (z^{n-i} (x,y)^{n+2i}  :(yz))
     + ((x,y)^{3n} :(yz)) \\ \nonumber
     &=& (z^{3n-1}) +  \sum_{i=1}^{n-1} z^{3n-2i-1} (x,y)^{i-1}
     + z^{n-1} (x,y)^{n-1}
     + \sum_{i=1}^{n-1} (z^{n-i-1} (x,y)^{n+2i-1} 
     + (x,y)^{3n-1}\\ \nonumber
     &=&  \sum_{i=0}^{n-2} z^{3n-2i-3} (x,y)^{i}
    + z^{n-1} (x,y)^{n-1}
    + \sum_{i=1}^{n-2} (z^{n-i-1} (x,y)^{n+2i-1}  + (x,y)^{3n-3}   + (x,y)^{3n-1}\\ \nonumber
    &=& \sum_{i=0}^{n-2} z^{3(n-1)-2i} (x,y)^{i}
    + z^{n-1} (x,y)^{n-1}
    + \sum_{i=1}^{n-2} (z^{(n-1)-i} (x,y)^{(n-1)+2i}  
    +  (x,y)^{3(n-1)}. \\ \nonumber
     &=& L^{n-1}.
 \end{eqnarray}
 Hence for all $n \geq 2$ we have the exact sequence,
\begin{eqnarray}
\label{ses L^n}
    0 \longrightarrow \frac{R}{L^{n-1}}
    \longrightarrow \frac{R}{L^{n}}
    \longrightarrow \frac{R}{L^{n-1} + (yz)}
    \longrightarrow 0.
\end{eqnarray} 
Hence for all $n \geq 1$,  
\begin{align}
\label{length of Ln} \nonumber
\lambda \left( \frac{R}{L^{n}} \right) 
&= \sum_{i=1}^n  \lambda \left( \frac{R}{L^{i} + (yz)} \right) & \mbox{(by (\ref{ses L^n}))}\\ \nonumber
&=\sum_{i=1}^n  \left[ \lambda \left( \frac{R}{L^{i} + (y)} \right) 
+   \lambda \left( \frac{R}{L^{i} + (z)} \right)
-  \lambda \left( \frac{R}{L^{i} + (y,z)} \right)\right]&\\ \nonumber
&= \sum_{i=1}^n \left[ i^2 + 4 \binom{i+1}{2} 
+ \binom {3i+1}{ 2 } -3i \right]  & \mbox{(by (\ref{length ln+y}), (\ref{powers of ln plus z}))}  \\ \nonumber
&= \sum_{i=1}^n \frac{15 i^2 + i}{2}& \\ \nonumber
&= \frac{n (n+1) (5n+3)}{2} & \\
& =  15 \binom{n+2}{3} - 7 \binom{n+1}{2}.  
   \end{align}
 From (\ref{powers of K}) and (\ref{powers of L}) we get $L^n= K^n + (x^{3n-1}y)$ for all $n \geq 1$. 
Moreover,  $I^2 = K^2 + (yz^4, z^6)$ and $I^n = K^n$ for all $n\geq 3$. Applying to (\ref{length of Ln}) we get that for all $n \geq 3$, 
\begin{align*}
 \lambda \left(  \frac {R}{I^n}\right)
= \lambda \left(  \frac {R}{K^n}\right)
= \lambda \left(  \frac {R}{L^n}\right)+1
 =  15 \binom{n+2}{3} - 7 \binom{n+1}{2} +1.   
  \end{align*}
  Since $L = \overline{K} = \overline{L}$, we get $e_0(L)= e_0(K) = e_0(I) =15$. Moreover, 
   $e_1(I) =e_1(K) = e_1(L) = 7$,  $e_2(I)=e_2(K)=0$ and $e_3(I) = e_3(K)=-1$. 

 (\ref{e2=0, e3 not 0-2})(c)
 If $\depth G(K)=2$, then  $e_2(I)=0$ implies  $e_3=0$   by \cite[Corollay~2(2)]{marley}, which leads to a contradiction as $e_3(I) =-1$. Hence $\depth G(K)=1$. 

As $e_2(L)=e_3(L)=0$,   by \cite[Theorem~2.1]{huneke}, the reduction number of $L$ is $1$.  Hence $\depth G(L)=3$.
  \end{proof}

\begin{example}  \label{example4.6}
Let  $R =\kk [[x,y,z]]$, where $x,y,z$ are variables and $\mathbb K$ is any field.
  Let $I =  (x^4-y^4,  y^{5}-z^{5}, yz, xz^2, xy^2)$. Then the following are true:
   \begin{enumerate} [\normalfont(i)]
    
     \item  \label{example4.6-1}
      $e_2(I) = 0$ and $e_3(I) =-1$. 
       \item \label{example4.6-2}
       $\depth G(I) = 1$.  
  \end{enumerate}
\end{example}
\begin{proof}  (\ref{example4.6-1}) Using CoCoA \cite{cocoa}, the Hilbert series of $I$ is:
\begin{equation*}
    HS_{I}(t)=\frac{19 + 4t + 3t^2 - t^3}{(1-t)^3}.
\end{equation*}
Hence, $e_{2}(I)=0$ and $e_{3}(I)=-1.$

 (\ref{example4.6-2})
 Put $f_1=yz$, $f_2 = xz^2+x^4-y^4$, $f_3=xy^2+xz^2+y^5-z^5$ and   $J = (f_1, f_2, f_3)$. 
 Note that 
  \begin{align*}
\label{ex 1.5 min red} \nonumber
 e_{0}(J) =& e_{0}(yz, x^4-y^4 + xz^2 + yz, y^{5}-z^{5} + xy^2 + xz^2)\\ \nonumber
 =& e_{0}(y, x(x^{3} + z^2), -z^{5} + xz^2)  
 + e_{0}(z, x^4-y^4,  y^{5} + xy^2)\\ \nonumber
=& e_{0}(y, x, z^{5})+  e_{0}(y, x^{3} + z^2,-z^{5} + xz^2 )  
+ e_{0}(z, x^4-y^4, y^{5} + xy^2)\\ \nonumber
=& 5 + e_{0}(y, x^{3},z^2 ) + e_{0}(y, x^{3} + z^2,-z^{3} + x ) 
+  e_{0}(z, x^4, y^2) +  e_{0}(z, x^4-y^4, y^{3} + x)\\
=& 5 + 6 + 2 + 8 + 4 = 25.
\end{align*}
Since, $e_{0}(J)=e_{0}(I)$ and $J\subseteq I,$ thus, $J$ is a minimal reduction of $I.$ Note that $JI^{2}=I^{3}.$ We claim that $\depth G(I)=1.$  Using CoCoA \cite{cocoa}, we get
$$HS_{I/(f_{1})}(t)=\frac{19 + 4t + 3t^2 - t^3}{(1-t)^2}.$$ Hence, from Proposition \ref{hibertcoeffsup} (v), $f_{1}^*$ is a non-zero divisor in $G(I),$ this implies $\depth G(I) \geq 1.$ Suppose $\depth G(I)\geq 2,$ then from \cite[Theorem 2.5]{rv} and the fact $r_{J}(I)=2,$ we have $e_{3}(I)=0,$ which is a contradiction. Therefore, $\depth G(I)=1.$
\end{proof}

In \cite{vv}, authors proved that $\depth G(I^{n}) \geq \depth G(I)$ for all $n \geq 1.$ In the following lemma, we prove a similar result for the associated graded ring of the Ratliff-Rush filtration.

\begin{lemma}  \label{lem1}
    Let $(R,\m)$ be a Cohen-Macaulay local ring with an infinite residue field and $I$ an ideal. Then $\depth \widetilde{G}(I^{k})\geq \depth \widetilde{G}(I)$ for all $k \geq 1.$
\end{lemma}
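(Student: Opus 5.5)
We will adapt the argument of \cite{vv} for $\depth G(I^{n})\geq \depth G(I)$ to the Ratliff--Rush filtration. We may assume $I$ is $\m$-primary, or at least $\grade I>0$, so that $\widetilde{I^n}=I^n$ for $n\gg 0$ and $\widetilde{G}(I)$ is Noetherian.

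The first step is to reduce $\depth \widetilde{G}(I)\geq s$ to a statement about elements of $R$. By a Sally-type characterization for the Ratliff--Rush filtration, $\depth \widetilde{G}(I)\geq s$ holds if and only if there is a superficial sequence $x_1,\ldots,x_s$ of $I$ (a regular sequence in $R$, since $R$ is Cohen--Macaulay) such that
\[
\widetilde{I^{n+1}}\cap (x_1,\ldots,x_s)=(x_1,\ldots,x_s)\widetilde{I^{n}} \quad \text{for all } n.
\]
Equivalently, the initial forms $x_i^*\in \widetilde{I}/\widetilde{I^{2}}$ form a regular sequence on $\widetilde{G}(I)$. For $I^k$ we use the elements $x_1^k,\ldots,x_s^k$. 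These form a superficial sequence for $I^k$, and since $\widetilde{(I^k)^n}=\widetilde{I^{kn}}$ for $\grade I>0$, the Ratliff--Rush filtration of $I^k$ is the Veronese subfiltration $\{\widetilde{I^{kn}}\}_n$.

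The second step uses a standard fact: if $y^*$ is regular on a graded ring $G=\bigoplus G_n$ with $y^*\in G_1$, then $(y^k)^*=(y^*)^k$ is regular on the Veronese $G^{(k)}=\bigoplus G_{kn}$. Moreover, modding out by regular degree-one elements commutes with taking Veronese subrings at the level of filtrations. Concretely, we show by induction on $i$ that
\[
\widetilde{I^{kn+k}}\cap (x_1^k,\ldots,x_i^k)=(x_1^k,\ldots,x_i^k)\widetilde{I^{kn}} .
\]
The case $i=1$ comes from iterating $\widetilde{I^{m+1}}:x_1=\widetilde{I^m}$, which is regularity of $x_1^*$. This gives $\widetilde{I^{m+k}}:x_1^k=\widetilde{I^m}$, hence regularity of $(x_1^k)^*$ on $\widetilde{G}(I^k)$.

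For the inductive step we pass to $R'=R/(x_1)$ and compare $\widetilde{I^n}R'$ with the filtration $\widetilde{I^nR'}$. Regularity of $x_1^*$ gives $\widetilde{G}(I)/x_1^*\widetilde{G}(I)\cong G(\{\widetilde{I^n}R'\})$. We then work with the filtration $\mathcal{F}'=\{\widetilde{I^n}R'\}$ instead, proving the general statement for arbitrary $I$-admissible filtrations $\mathcal{F}$ and their Veronese filtrations $\mathcal{F}^{(k)}$. The claim to prove is: if $x_1^*,\ldots,x_s^*$ is $G(\mathcal{F})$-regular of degree one, then $(x_1^k)^*,\ldots,(x_s^k)^*$ is $G(\mathcal{F}^{(k)})$-regular. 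This general statement also passes to quotients, since $\mathcal{F}^{(k)}R/(x_1^k)$ and $(\mathcal{F}R/(x_1))^{(k)}$ are related by a filtration of $R/(x_1^k)$ by $R/(x_1)$-pieces.

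The main obstacle is exactly this last point. Passing from $x_1$ to $x_1^k$ changes the quotient ring, so the naive induction does not close. We would first try to avoid it by proving directly that $(x_1^k)^*,\ldots,(x_s^k)^*$ is a regular sequence, using the criterion that powers of a regular sequence are regular. Since $x_1^*,\ldots,x_s^*$ is $G(\mathcal{F})$-regular, $(x_1^*)^k,\ldots,(x_s^*)^k$ is also $G(\mathcal{F})$-regular. A homogeneous regular sequence on $G$ lying in $G^{(k)}$ remains regular on $G^{(k)}$, because $G^{(k)}$ is a direct summand of $G$ as a $G^{(k)}$-module, with $G=\bigoplus_{j=0}^{k-1}\bigoplus_n G_{kn+j}$. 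Finally we identify $G(\mathcal{F}^{(k)})$ with $G(\mathcal{F})^{(k)}$, where $(x_i^k)^*$ corresponds to $(x_i^*)^k$. This gives $\depth \widetilde{G}(I^k)\geq s$.
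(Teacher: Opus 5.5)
\textbf{There is a genuine gap.} Your final step does not hold. The ring $G(\mathcal{F}^{(k)})=\bigoplus_n \mathcal{F}_{kn}/\mathcal{F}_{kn+k}$ is not the Veronese ring $G(\mathcal{F})^{(k)}=\bigoplus_n \mathcal{F}_{kn}/\mathcal{F}_{kn+1}$. Take $R=\kk[[t]]$, $I=(t)$ and $k=2$. Then $\widetilde{G}(I^2)_0=R/(t^2)$ has length $2$, while $(G(I)^{(2)})_0=R/(t)$ has length $1$. Your direct-summand argument is fine in itself, but it only shows that $(x_1^*)^k,\dots,(x_s^*)^k$ is regular on $\bigoplus_n \widetilde{I^{kn}}/\widetilde{I^{kn+1}}$, and that says nothing yet about $\widetilde{G}(I^k)$. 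You also abandoned the inductive route before finishing it. So as written, the proposal proves the claim only for $s=1$, via your computation $\widetilde{I^{m+k}}:x_1^k=\widetilde{I^m}$.

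\textbf{The missing idea.} Apply the Valabrega--Valla criterion to homogeneous elements of degree $k$, not degree one. This removes any need for induction on $i$ or passage to quotients.

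\begin{enumerate}
\item Since $x_i^*$ is a nonzerodivisor, $(x_i^*)^k\neq 0$. Hence $x_i^k\notin\widetilde{I^{k+1}}$ and $(x_i^k)^*=(x_i^*)^k$ in $\widetilde{G}(I)_k$.
\item You correctly noted that $(x_1^*)^k,\dots,(x_s^*)^k$ is $\widetilde{G}(I)$-regular. By Valabrega--Valla in its degree-$k$ form, $x_1^k,\dots,x_s^k$ is $R$-regular and
\[
\widetilde{I^{m}}\cap(x_1^k,\dots,x_s^k)=(x_1^k,\dots,x_s^k)\,\widetilde{I^{m-k}}\quad\text{for all } m.
\]
\item Take $m=kn$ and use your observation $\widetilde{(I^k)^n}=\widetilde{I^{kn}}$. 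This is exactly the degree-one Valabrega--Valla condition for $x_1^k,\dots,x_s^k$ with respect to the Ratliff--Rush filtration of $I^k$.
\item Hence $(x_1^k)^*,\dots,(x_s^k)^*$ is $\widetilde{G}(I^k)$-regular.
\end{enumerate}

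This is precisely the paper's proof. Your $i=1$ computation is the single-element case of it.
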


\begin{proof}
    Let $x_{1},\ldots,x_{s} \in \widetilde{I}\backslash\widetilde{I^{2}}$  be elements such that $x_{1}^{*},\ldots,x_{s}^{*}$ is a $\widetilde{G}(I)$-regular sequence. Then for every $k\geq 1$, $(x_{1}^{*})^{k},\ldots,(x_{s}^{*})^{k}$ is also a $\widetilde{G}(I)$-regular sequence. Note that $(x_{i}^{*})^{k}=(x_{i}^{k})^{*}$ for $i=1,\ldots,s$, therefore, $(x_{1}^{k})^{*},\ldots,(x_{s}^{k})^{*}$ is a $\widetilde{G}(I)$-regular sequence. By Valabrega-Valla \cite{vv}, we have, $x_{1}^{k},\ldots,x_{s}^{k}$ is an $R$-regular sequence and
    \begin{equation*}
  \widetilde{(I^{k})^{n}}\cap(x_{1}^{k},\ldots,x_{s}^{k})
  = \widetilde{I^{kn}}\cap(x_{1}^{k},\ldots,x_{s}^{k})
        =(x_{1}^{k},\ldots,x_{s}^{k}) \widetilde{I^{k(n-1)}} 
        =(x_{1}^{k},\ldots,x_{s}^{k}) \widetilde{(I^{k})^{(n-1)}}\text{ for all } n \geq 1.
    \end{equation*}  
     Therefore, by Valabrega-Valla \cite{vv}, $(x_{1}^{k})^{*},\ldots,(x_{s}^{k})^{*}$ is a $\widetilde{G}(I^{k})$-regular sequence, this implies  $\depth \widetilde{G}(I^{k})\geq \depth \widetilde{G}(I)$ for all $k \geq 1.$
\end{proof}

In the next proposition, we prove an analogous result to Corollary \ref{core2equalszeroCM} for $d$-dimensional Cohen-Macaulay ring to understand the behavior of the depth of the associated graded ring under the assumption that $e_{i}(I)=0$ for all $2 \leq i \leq d$. We also generalize Narita's result \cite[Theorem 1]{narita} to $d$-dimensional Cohen–Macaulay rings. Note that the second part of the following proposition was previously established for maximal Cohen–Macaulay modules in \cite[Theorem 1.1 (3)]{tjp3}, where the author used induction on the dimension of the ring. Our approach is different  we present our proof here.

\begin{proposition} \label{vanishinghilbcoeffCM}
       Let $(R, \m)$ be a Cohen-Macaulay local ring of dimension $d \geq 2,$ and $I$ an $\m$-primary ideal. If $e_{2}(I)=e_{3}(I)=\ldots=e_{d}(I)=0,$ then  the following hold:
       \begin{enumerate}[\normalfont(i)]
           \item \label{mom-1} $\depth G(I) = 0$ or $d.$
           \item \label{mom-2} $G(I^{n})$ is Cohen-Macaulay for sufficiently large $n.$
       \end{enumerate}
\end{proposition}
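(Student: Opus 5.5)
The plan is to treat part (i) by showing that positive depth forces Cohen--Macaulayness, and then to deduce part (ii) from part (i) applied to the powers $I^n$.

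For (i), suppose $\depth G(I)\geq 1$; we will show $G(I)$ is Cohen--Macaulay. We argue by induction on $d$; the case $d=2$ is Corollary \ref{core2equalszeroCM}(i). Indeed, if $\depth G(I)\geq 1=d-1$, then Proposition \ref{e2equalszeroCM} rules out $\depth G(I)=d-1$, so $\depth G(I)=2$.

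For $d\geq 3$, choose a superficial element $x\in I$ with $x^*$ regular on $G(I)$, which is possible since $\depth G(I)\ge1$ and the residue field is infinite. Set $R'=R/(x)$ and $I'=IR'$. Because $x^*$ is $G(I)$-regular, Proposition \ref{hibertcoeffsup}(v) gives $HS_I(t)=HS_{I'}(t)/(1-t)$. Hence $h_I=h_{I'}$, and so $e_i(I')=e_i(I)$ for all $0\le i\le d-1$. In particular $e_2(I')=\cdots=e_{d-1}(I')=0$, while $R'$ is Cohen--Macaulay of dimension $d-1\geq 2$.

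By the induction hypothesis, $\depth G(I')\in\{0,d-1\}$. This is where the argument needs care: we must exclude $\depth G(I')=0$. If $G(I')$ is Cohen--Macaulay, Sally's descent gives that $G(I)$ is Cohen--Macaulay, and we are done. If instead $\depth G(I')=0$, we argue as follows. Since $e_i(I')=0$ for $2\le i\le d-1$, the relation $h_I=h_{I'}$ with $e_d(I)=0$ forces $h_{I'}^{(i)}(1)=0$ for $2\le i\le d$. Then either $h_{I'}$ is linear, or the Hilbert coefficients of $I'$ beyond its own dimension vanish.

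The natural route is via the Ratliff--Rush filtration. Because $x^*$ is regular, $\widetilde{I^n}=I^n$ for all $n$, so $G(I)=\widetilde G(I)$. We then try to show $\widetilde{I'^{\,n}}=I'^{\,n}$, i.e.\ $\depth G(I')\ge1$, by combining the regularity of $x^*$ with the vanishing of the $e_i$. The cleanest way to do this is to avoid induction altogether and use Corollary \ref{e_i} directly. When $\depth G(I)\geq 1$, go down to dimension one modulo $x_1,\dots,x_{d-1}$ and use the Huckaba-type formulas: $e_2(I)=\sum_{n\ge1} n\lambda(l_{Q,I}(n))+(\text{correction terms involving }w_n)$. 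Huckaba's inequalities $e_i\ge \sum\binom{n}{i-1}\lambda(l_{Q,I}(n))$, valid when $\depth G(I)\ge d-1$, are not available at intermediate depth. So instead we use the fact that $\widetilde G(I)=G(I)$ together with the known bound $e_2(I)\geq \sum_{n\ge1}n\lambda(\widetilde{I^{n+1}}/J\widetilde{I^n})$ when $\depth\widetilde G(I)\ge1$ (Rossi--Valla), reducing to dimension $2$ by $d-2$ general elements, where Ratliff--Rush closure behaves well. Then $e_2(I)=0$ forces $I^{n+1}=JI^n$ for $n\ge1$, so $r_J(I)\le1$, and by Huneke--Ooishi (as in Proposition \ref{e2equalszeroCM}) $G(I)$ is Cohen--Macaulay.

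The main obstacle is justifying this reduction: passing modulo $d-2$ superficial elements preserves $e_2$ and positivity of depth for $\widetilde G$. We would try to get this via $\depth G(I)\ge1$ together with the Sally-machine argument on the Ratliff--Rush filtration. The vanishing of $e_3,\dots,e_d$ is what allows lifting $e_2$-information back, since it ensures there are no correction terms $\lambda((I^{n+1}:x)/I^n)$ in Proposition \ref{hibertcoeffsup}(iv) at each step.

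For (ii), first note that $e_i(I^n)$ is a linear combination of the $e_j(I)$ with $j\le i$ and coefficients polynomial in $n$. Concretely, $P_{I^n}(m)=P_I(nm)$, so with $e_2=\cdots=e_d=0$ the polynomial $P_I(nm)=e_0\binom{nm+d-1}{d}-e_1\binom{nm+d-2}{d-1}$. We then verify that rewriting this in the $m$-binomial basis gives $e_i(I^n)=0$ for $i\ge2$. That this really happens is exactly \cite[Lemma 2.12]{tjm}, used as in Corollary \ref{core2equalszeroCM}(ii) and extended to all $i$. Next, for large $n$ we have $\widetilde{I^{nk}}=I^{nk}$ for all $k$, so $\depth G(I^n)\ge1$. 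Applying (i) to $I^n$ then gives that $G(I^n)$ is Cohen--Macaulay.
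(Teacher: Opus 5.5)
Your base case $d=2$ is fine, but for $d\ge3$ part (i) is not actually proved. The decisive step is never carried out. That step is excluding $\depth G(I')=0$, or equivalently lifting the dimension-two conclusion $\widetilde{I''^{n+1}}=J''\widetilde{I''^{n}}$ back to $I^{n+1}=JI^{n}$. You only name it ``the main obstacle'', and the justification you sketch does not work:
\begin{itemize}
\item The Rossi--Valla formula $e_2=\sum_{n\ge1}n\lambda(\widetilde{I^{n+1}}/J\widetilde{I^{n}})$ needs $\depth\widetilde G(I)\ge d-1$. That is why you are pushed down to dimension two, but only the first element $x$ is guaranteed to have $x^*$ regular on $G(I)$.
\item When $d\ge4$, the correction term $\sum_n\lambda((I'^{n+1}:y)/I'^{n})$ of Proposition~\ref{hibertcoeffsup}(iv) for the next element $y$ vanishes iff $e_{d-1}(I'')=e_{d-1}(I')$. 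But $e_{d-1}(I'')$ lies beyond $\dim R''$, so the hypotheses $e_3(I)=\cdots=e_d(I)=0$ say nothing about it.
\item More seriously, replacing closures by powers in dimension two needs $\depth G(I'')\ge1$, i.e.\ $\depth G(I)\ge d-1$. That is exactly the case Proposition~\ref{e2equalszeroCM} already settles, so the reduction is circular.
\end{itemize}
The missing ingredient is \cite[Theorem 6.2]{tjp2}: $e_2(I)=\cdots=e_d(I)=0$ forces $\widetilde G(I)$ to be Cohen--Macaulay. With it the paper argues directly. \cite[Theorem 2.5]{rv} for the Ratliff--Rush filtration gives $0=e_2(I)=\sum_{n\ge1}n\lambda(\widetilde{I^{n+1}}/J\widetilde{I^{n}})$, and $\depth G(I)>0$ gives $\widetilde{I^{n}}=I^{n}$. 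Hence $r_J(I)\le1$ and $G(I)$ is Cohen--Macaulay.

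The same input would also close your induction. If $\widetilde G(I')$ is Cohen--Macaulay of minimal multiplicity, then $\widetilde e_d(I')=0$, so $0=e_d(I)=e_d(I')=(-1)^d\sum_{n\ge0}\lambda(\widetilde{I'^{n+1}}/I'^{n+1})$, forcing $\depth G(I')>0$. For $d=3$ that property of $\widetilde G(I')$ follows by hand from $e_2(I')=0$ in dimension two. For $d\ge4$ it is precisely Puthenpurakal's theorem in dimension $d-1$.

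Part (ii) rests on a false claim: for $d\ge3$ the coefficients $e_i(I^n)$ with $2\le i\le d-1$ do not vanish in general. Only the top one is stable, $e_d(I^n)=e_d(I)$ (the constant term of $P_I(nm)$). That is why \cite[Lemma 2.12]{tjm} suffices in Corollary~\ref{core2equalszeroCM}, where $d=2$. For $d=3$, evaluating $P_{I^n}(m)=P_I(nm)$ at $m=-1$ gives
\[
e_2(I^n)=e_0(I)\binom{n}{3}+e_1(I)\binom{n}{2}+n\,e_2(I).
\]
So $I=\m$ in $\kk[[x,y,z]]$ has $e_2(\m)=e_3(\m)=0$ but $e_2(\m^n)=\binom{n}{3}\neq0$ for $n\ge3$. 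Moreover, for $d=3$ every non-parameter $I$ satisfying the hypotheses has $e_2(I^2)=e_1(I)>0$. Thus (i) cannot be applied to $I^n$.

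The paper avoids Hilbert coefficients of powers altogether. It takes $\widetilde G(I)$ Cohen--Macaulay from \cite[Theorem 6.2]{tjp2} and gets $\widetilde G(I^k)$ Cohen--Macaulay for every $k$ from Lemma~\ref{lem1}. It then notes that $G(I^n)=\widetilde G(I^n)$ once $n$ is so large that $\widetilde{I^m}=I^m$ for all $m\ge n$.
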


\begin{proof} 
(\ref{mom-1}) Suppose  $\depth G(I)>0,$ then from \cite[Remark 1.6]{rs}, we have   
$\widetilde{I^{n}}=I^{n}$ for all $n \geq 1.$ By our assumption,  $e_{2}(I)=e_{3}(I)=\ldots=e_{d}(I)=0,$ therefore, 
from \cite[Theorem 6.2]{tjp2}, $\widetilde{G}(I)$ is Cohen-Macaulay. 
By \cite[Theorem 2.5(c)]{rv}, 
we have 
$
 0 =e_2(I) = \widetilde{e}_{2}(I)
=\displaystyle \sum_{n\geq1} n \lambda\left(\frac{\widetilde{I^{n+1}}}{J\widetilde{I^{n}}}\right).$ 
Therefore, $\widetilde{I^{n+1}}= J\widetilde{I^{n}}$ for all $n \geq 1,$ which implies that $I^{n+1}=JI^{n}$ for all 
$n \geq 1.$ Thus, $r_{J}(I) \leq 1$, and by \cite[Theorem 2.9(3)]{rv},   $G(I)$ is Cohen-Macaulay. Therefore,  $\depth G(I) = 0$ or $d.$

(\ref{mom-2})  By our assumption,  $e_{2}(I)=e_{3}(I)=\ldots=e_{d}(I)=0,$ therefore, 
from \cite[Theorem 6.2]{tjp2}, $\widetilde{G}(I)$ is Cohen-Macaulay. Again by Lemma \ref{lem1}, we have  $\depth \widetilde{G}(I^{k})\geq  \depth \widetilde{G}(I)$ for $k \geq 1,$ this implies $\widetilde{G}(I^{k})$ is Cohen-Macaulay. Note that $\widetilde{I^{n}}=I^{n}$ for sufficiently large $n,$ Therefore,  $\widetilde{G}(I^{n}) =G(I^{n})$ for sufficiently large $n,$ this implies $G(I^{n})$ is Cohen-Macaulay for sufficiently large $n.$
\end{proof}

We now give a class of ideals for which $e_{2}(I)=e_{3}(I)=0$ and $\depth G(I)=0.$ The following example is a generalization of \cite[Example 3.8]{rv}.

\begin{example} 
\label{example 3.8}
Let $R=\mathbb{Q}[[x,y,z]]$, $r \ge 2$ and $I=(x^{r}-y^{r},y^{r}-z^{r},xy,yz,zx)$. 
Then the following are true:
\begin{enumerate}[\normalfont(i)]
    \item \label{example 3.8-1}
        $e_{2}(I)=e_{3}(I)=0,$
     \item \label{example 3.8-2}
  $\depth G(I)=0$.
   \end{enumerate}
\end{example}
\begin{proof}
(\ref{example 3.8-1})
Put $K = (x^r, y^r, z^r, xy, xz, yz)$. 
We claim that for all  $n \geq 2$, 
\begin{eqnarray*}
 I^n 
 = K^n
=  \sum_{i=0}^{n}(x^{(n-i)r}, y^{(n-i)r}, z^{(n-i)r}) (xy, xz, yz)^i.
\end{eqnarray*}
Let $n=2$. Clearly $I^2 \subseteq  K^2 = (x^{2r}, y^{2r}, z^{2r}) + (x^{r}, y^r, z^r)(xy, xz, yz)  + (xy, xz,yz)^2.$
Note that
\begin{eqnarray*}
     x^{2r} 
&=& (x^r-y^r)^2 + (x^r-y^r) (y^r-z^r) + (xz)^2 (xz)^{r-2} + (xy)^2 (xy)^{r-2} + (yz)^2(yz)^{r-2}\\
x^r(xy) 
&=& ((x^r-y^r) + (y^r-z^r)) (xy) + (xz)(yz) z^{r-2}\\
x^r (yz) 
&=& (xy)(xz)y^{r-2}\\
x^r (xz)
&=& (x^r-y^r) (xz) + (xy)(yz)y^{r-2}.
\end{eqnarray*}
This implies that $x^r(x^r, xy, xz, yz) \subseteq I^2$. Since the ideal is symmetric in $x,y,z$,  
thus $y^r(x^r, xy, xz, yz) + z^r(x^r, xy, xz, yz) \subseteq I^2$. Hence $I^2 = K^2 = KI$. 
For all $n \geq 3$,  we have
\begin{eqnarray*}
I^n 
= I^2 I^{n-2}
= K^2 I^{n-2}
= K (KI) I^{n-3}
= K^3 I^{n-3}
= \cdots   K^{n-2} I^2 = K^n. 
\end{eqnarray*} 
Thus, $e_{i}(I)=e_{i}(K)$ for all $0 \leq i \leq 3$,
hence, $K \subseteq \widetilde{I}$.  Since $K = (x^r, y,z) \cap (x, y^r, z) \cap (x,y,z^r)$, it is a complete ideal. Therefore, $\widetilde{I}=K$.  
Now, for sufficiently large $n$,   
\begin{align*}
\lambda \left(  \frac{R}{I^n}\right)  
=\lambda\left(  \frac{R}{K^n}\right) &= \binom{2n+2}{3} + (3(r-2))\binom{n+2}{3}\\
&=  8 \binom{n+2}{3} - 4 \binom{n+1}{2} + (3(r-2))\binom{n+2}{3}\\
&= (3r+2) \binom{n+2}{3} - 4 \binom{n+1}{2}.
\end{align*}
Hence, $e_{2}(I)=0$ and $e_{3}(I)=0.$

 (\ref{example 3.8-2}) Clearly, $I \subset K = \widetilde{I}.$ Thus, $I \neq \widetilde{I}$, this implies $\depth G(I)=0.$ 
\end{proof}

We now prove a result concerning the
the signature of $e_{d}(I)$ in  a $d$-dimensional Cohen-
 Macaulay local ring under the assumption that $e_{i}(I)=0$ for all $2 \leq i \leq d-1$  (see Proposition~\ref{edresult}).  This result was 
  proved by Puthenpurakal  in \cite[Theorem 1.4]{tjp3}. Since   our proof has a different approach, we prove the result.  
 We first fix some definitions and prove an elementary lemma.

 \begin{definition} \cite[6.2, 6.3]{tjp}
Let $(R,\m)$ be a Cohen-Macaulay local ring and $I$ be an $\m$-primary ideal. Let $x$ be superficial for $I$,  set $R'=R/(x)$ and $I'=IR'.$ 

We have the following exact sequence of $\mathcal{R}(I)$-modules, called the \textit{second fundamental exact sequence}:
\begin{equation}
\label{eq:u}
    0\longrightarrow \mathcal{B}(x,R) \longrightarrow L^{I}(R)(-1
) \xlongrightarrow{\psi_{x}} L^{I}(R) \longrightarrow L^{I'}(R') \longrightarrow 0,
\end{equation}
 where $\psi_{x}$ is multiplication by $x,$ $\mathcal{B}(x,R)= \displaystyle \bigoplus_{n \geq 0} \frac{(I^{n+1}:x)}{I^{n}},$ and $L^{I}(R)=\displaystyle \bigoplus_{n \geq 0} R/I^{n+1}.$  See \cite{tjp,tjp2} for references and further applications of $L^{I}(R)$ module.  
 Note that, $(I^{n+1}:x)=I^{n}$ for sufficiently large $n.$ Thus, $\mathcal{B}(x,R)$ has finite length, therefore, $H^{0}_{\M}\left(\mathcal{B}(x,R)\right)$ $=\mathcal{B}(x,R),$ where $H^{i}_{\mathfrak{M}}(\text{\textunderscore})$ denotes that $i$-th local cohomology with support in $\M=\m \oplus \mathcal{R}(I)_{+}=\m \oplus\displaystyle \left(\bigoplus_{n \geq 1}I^{n}t^{n}\right).$
 The sequence in (\ref{eq:u}) induces a long exact sequence of local cohomology modules. 
\begin{equation} \label{4}
    \begin{aligned}
        0 \longrightarrow \mathcal{B}(x,R)  &  \longrightarrow H^{0}_{\M}\left(L^{I}(R)\right)(-1) \longrightarrow H^{0}_{\M}\left(L^{I}(R)\right) \longrightarrow H^{0}_{\M}\left(L^{I'}(R')\right) \\
     & \xlongrightarrow{\delta} H^{1}_{\M}\left(L^{I}(R)\right)(-1) \longrightarrow H^{1}_{\M}\left(L^{I}(R)\right) \longrightarrow H^{1}_{\M}\left(L^{I'}(R')\right) \cdots.
    \end{aligned}
\end{equation}
\end{definition}

In the following lemma, we compare $\lambda(\mathcal{B}(x,R))$ and $\lambda\left( {\mathcal H}^{0}_{\M}\left(L^{I'}(R')\right)\right).$

\begin{lemma} \label{comparing}
Let $(R, \m)$ be a Cohen-Macaulay local ring and $I$ be an $\m$-primary ideal. 
 Then 
 \begin{equation*}
     \lambda(\mathcal{B}(x,R))\leq \lambda\left(H^{0}_{\M}\left(L^{I'}(R')\right)\right).
 \end{equation*}
\end{lemma}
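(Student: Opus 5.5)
The plan is to read the inequality off the long exact sequence (\ref{4}) in degrees, using that $\mathcal{B}(x,R)$ has finite length. The first four terms give the exact sequence
\[
0 \longrightarrow \mathcal{B}(x,R) \longrightarrow H^{0}_{\M}\left(L^{I}(R)\right)(-1) \xlongrightarrow{\psi_x} H^{0}_{\M}\left(L^{I}(R)\right) \longrightarrow H^{0}_{\M}\left(L^{I'}(R')\right).
\]
Write $H:=H^{0}_{\M}(L^{I}(R))$. If $H$ had finite length, then restricting to any finite range of degrees and using additivity of length would give $\lambda(\mathcal{B}(x,R)) = \lambda(\ker\psi_x) = \lambda(\operatorname{coker}\psi_x) \le \lambda(H^{0}_{\M}(L^{I'}(R')))$, because $\operatorname{coker}\psi_x$ embeds into $H^{0}_{\M}(L^{I'}(R'))$. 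However, $H$ need not have finite length, so the plan must work degree by degree.

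I would first record the structure of $H$. Since $R$ is Cohen-Macaulay of dimension $d\ge 1$ and $I$ is $\m$-primary, \cite[6.2, 6.3]{tjp} gives the following: $L^{I}(R)_n = R/I^{n+1}$ vanishes for $n<0$, and $H^{0}_{\M}(L^{I}(R))$ is concentrated in finitely many degrees (in fact $H^{0}_{\M}(L^{I}(R))_n = \widetilde{I^{n+1}}/I^{n+1}$, which is zero for $n\gg 0$). In particular $H$ has finite length. Granting this, we write $h_n=\lambda(H_n)$. The map $\psi_x$ sends $H(-1)_n=H_{n-1}$ to $H_n$. The graded exact sequence then gives, for every $n$,
\[
\lambda(\mathcal{B}(x,R)_n) - h_{n-1} + h_n - c_n = 0,
\]
where $c_n=\lambda((\operatorname{coker}\psi_x)_n)\le \lambda(H^{0}_{\M}(L^{I'}(R'))_n)$.

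Summing this identity over all $n$ is legitimate because all terms vanish outside a finite range. The $h$-terms telescope to zero since $h_n=0$ for $n<0$ and for $n\gg0$. This leaves $\lambda(\mathcal{B}(x,R)) = \sum_n c_n \le \lambda(H^{0}_{\M}(L^{I'}(R')))$, which is the claim.

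The main obstacle is justifying that $H^{0}_{\M}(L^{I}(R))$ has finite length, i.e.\ is zero in high degrees. I would argue this directly. An element of $R/I^{n+1}$ is killed by a power of $\M$ exactly when it is killed by $I^kt^k$ for $k\gg0$, i.e.\ when it lies in $\bigcup_k (I^{n+k+1}:I^k)/I^{n+1} = \widetilde{I^{n+1}}/I^{n+1}$; note that $\m$-torsion is automatic here since $I$ is $\m$-primary. Definition \ref{ratliffrushdef} says that $\widetilde{I^{n}}=I^n$ for $n\gg0$ because $\grade I>0$. Together with the vanishing in negative degrees, this gives finite length. If the right-hand side $H^{0}_{\M}(L^{I'}(R'))$ happens to be infinite, the inequality holds trivially.
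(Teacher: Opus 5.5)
Your proposal is correct and is essentially the paper's argument: both take lengths in the exact sequence $0 \to \mathcal{B}(x,R) \to H^{0}_{\M}(L^{I}(R))(-1) \to H^{0}_{\M}(L^{I}(R)) \to H^{0}_{\M}(L^{I'}(R'))$ and cancel the two middle terms using $\lambda(H^{0}_{\M}(L^{I}(R)))<\infty$. The paper simply asserts that finiteness, while you justify it via $H^{0}_{\M}(L^{I}(R))_n=\widetilde{I^{n+1}}/I^{n+1}$; once you have it, your degree-by-degree telescoping is the same global length count, so your caveat that $H$ ``need not have finite length'' can be dropped.
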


\begin{proof}
    From the long exact sequence of local cohomology modules in (\ref{4}), we have the following exact sequence:
    \begin{equation} \label{22}
       0 \longrightarrow \mathcal{B}(x,R)   \longrightarrow H^{0}_{\M}\left(L^{I}(R)\right)(-1) 
        \longrightarrow H^{0}_{\M}\left(L^{I}(R)\right) 
        \longrightarrow H^{0}_{\M}\left(L^{I'}(R')\right) 
        \longrightarrow C 
        \longrightarrow 0,
    \end{equation}

    where $C:=\textrm{image} (\delta)$. 
Taking  lengths of modules in (\ref{22}), we get
\begin{align*}
       \lambda\left(\mathcal{B}(x,R)\right)
 & = \lambda\left(H^{0}_{\M}\left(L^{I}(R)\right)(-1)\right)
  -\lambda\left(H^{0}_{\M}\left(L^{I}(R)\right)\right)
  +\lambda\left(H^{0}_{\M}\left(L^{I'}(R')\right)\right)
  - \lambda(C)\\
 & \leq \lambda\left(H^{0}_{\M}\left(L^{I'}(R')\right)\right)  \text{ (as  $\lambda\left(H^{0}_{\M}\left(L^{I}(R)\right)\right) < \infty)$}. \quad \quad\qedhere
\end{align*} 
\end{proof}

 \begin{proposition} \label{edresult}
     Let $(R, \m)$ be a Cohen-Macaulay local ring of dimension $d \geq 2,$ and $I$ an $\m$-primary ideal. If $e_{2}(I)=e_{3}(I)=\ldots=e_{d-1}(I)=0,$ then $(-1)^{d}e_{d}(I) \geq 0.$
\end{proposition}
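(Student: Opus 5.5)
We argue by induction on $d$, going modulo a superficial element $x$ of $I$ (a nonzerodivisor, since $R$ is Cohen--Macaulay). We set $R'=R/(x)$ and $I'=IR'$. The tools are the second fundamental exact sequence (\ref{eq:u}), Lemma \ref{comparing}, and Proposition \ref{hibertcoeffsup}(iv).

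Since $x$ is regular, $\lambda(0:x)=0$, so Proposition \ref{hibertcoeffsup}(iv) gives
\[
e_d(I')=e_d(I)+(-1)^d\lambda(\mathcal{B}(x,R)),\qquad\text{that is,}\qquad (-1)^d e_d(I)=(-1)^d e_d(I')-\lambda(\mathcal{B}(x,R)).
\]
Writing $(-1)^d e_d(I') = -(-1)^{d-1}e_{d}(I')$ makes the sign conventions explicit. The key inequality should come from interpreting $H^0_{\M}(L^{I'}(R'))$. By \cite{tjp}, $\lambda(H^0_{\M}(L^{I}(R)))$ is finite, and its graded pieces are $\widetilde{I^{n+1}}/I^{n+1}$. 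For a one-dimensional ring, $e_1$ and the Ratliff--Rush data are related through $\widetilde{e}_i=e_i$.

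The cleanest route is to reduce to $d=2$. For $d\ge 3$ choose a superficial sequence $x_1,\dots,x_{d-2}$, which is regular. Proposition \ref{hibertcoeffsup}(ii) preserves $e_i$ for $i\le d-2$ at the first step, but $e_d$ changes by the $\mathcal{B}$-correction, so we must track signs at each step. Plan: prove the stronger statement that, for Cohen--Macaulay $R$ with $e_2=\dots=e_{d-1}=0$,
\[
(-1)^d e_d(I)\ \ge\ \lambda\bigl(H^0_{\M}(L^{I}(R))\bigr)\ \ge 0 .
\]

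For the base case $d=2$ (where the hypothesis is vacuous), Narita gives $e_2\ge 0$. More precisely, the formula of \cite[Theorem 2.5]{rv} for the Ratliff--Rush filtration together with Huckaba--Marley type formulas gives
\[
e_2(I)=\widetilde e_2(I)\ \ge\ \sum_n n\,\lambda\bigl(\widetilde{I^{n+1}}/J\widetilde{I^n}\bigr) - \text{(terms)},
\]
and the known identity in dimension two is $e_2(I)=e_2(I')-\lambda(\mathcal{B})$. Here $R'$ is one-dimensional, so $e_2(I')=\sum_n \lambda(\widetilde{I'^{n+1}}/I'^{n+1})$-type sums equal $\lambda(H^0_{\M}(L^{I'}(R')))$ plus a nonnegative Ratliff--Rush contribution. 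Combining this with Lemma \ref{comparing}, $\lambda(\mathcal B)\le\lambda(H^0_{\M}(L^{I'}(R')))$, yields $e_2(I)\ge 0$.

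For the inductive step, the hypothesis $e_{d-1}(I)=0$ descends: $e_{d-1}(I')=e_{d-1}(I)=0$ because $x$ is regular. Hence $R'$ satisfies the hypotheses in dimension $d-1$, and induction gives $(-1)^{d-1}e_{d-1}(I')\ge\lambda(H^0_{\M}(L^{I'}(R')))$. We still need to link $e_d(I)$ with $e_{d-1}(I')$. This is the main obstacle, because superficial reduction relates $e_d(I)$ to $e_d(I')$, which is not a coefficient of $R'$ in dimension $d-1$ as written.

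So instead we use the Hilbert series. Since $e_2=\dots=e_{d-1}=0$, the $h$-polynomial satisfies $h^{(i)}(1)=0$ for $2\le i\le d-1$. We then study $\Delta$ of the Hilbert--Samuel function via (\ref{eq:u}): length additivity gives
\[
\lambda(R'/I'^{n+1})=\lambda(I^{n}/I^{n+1})+\lambda\bigl((I^{n+1}:x)/I^n\bigr).
\]
Summing the $d$-fold iterated version, $(-1)^d e_d(I)$ is expressed as $\lambda(H^0_{\M}(L^{I'}(R')))-\lambda(\mathcal B(x,R))$ plus contributions from $H^1_{\M}$ that vanish in the relevant degrees because of the vanishing of $e_2,\dots,e_{d-1}$ (using Puthenpurakal's description of $e_i$ through Euler characteristics of $H^i_{\M}(L^I(R))$). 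Lemma \ref{comparing} then gives nonnegativity. I expect verifying that these intermediate cohomology contributions cancel, using the vanishing hypotheses, to be the delicate step.
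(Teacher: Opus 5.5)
Your plan rests on the strengthened inequality $(-1)^d e_d(I)\ge\lambda(H^0_{\M}(L^{I}(R)))$, and that inequality is false, already in the base case. Since $H^0_{\M}(L^I(R))=\bigoplus_{n\ge0}\widetilde{I^{n+1}}/I^{n+1}$, it would force $\widetilde I=I$ whenever $(-1)^de_d(I)=0$. Two examples in the paper contradict this:
\begin{itemize}
\item For $I_r=(x^r,x^{r-1}y,xy^{r-1},y^r)\subseteq\kk[[x,y]]$ with $r\ge4$, one has $e_2(I_r)=0$ while $x^{r-2}y^{r-2}\in\widetilde{I_r}\setminus I_r$.
\item For $I=(x^r-y^r,y^r-z^r,xy,yz,zx)\subseteq\mathbb{Q}[[x,y,z]]$, one has $e_2(I)=e_3(I)=0$ with $\widetilde I\neq I$.
\end{itemize}
Your $d=2$ argument in fact only produces $e_2(I)\ge 0$, not the strengthened bound. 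The inductive step also never relates $e_d(I)$ to anything the $(d-1)$-dimensional statement controls, as you yourself note. Applied to $R'$, that statement speaks about $e_{d-1}(I')$, which is already $0$, and says nothing about $e_d(I')$.

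The Hilbert-series fallback leaves exactly the decisive step open. The mechanism you guess, cancellation of $H^1_{\M}$ contributions, is not what makes it work. What you need is $(-1)^de_d(I')\ge\lambda(H^0_{\M}(L^{I'}(R')))$, and it comes from the Ratliff--Rush filtration of $I'$.

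Because $\dim R'=d-1$, the $h$-polynomials of the $I'$-adic and Ratliff--Rush filtrations differ by $(1-t)^{d}\sum_{n}\lambda(\widetilde{I'^{n}}/I'^{n})t^n$. Hence $e_d(I')=\widetilde e_d(I')+(-1)^d\lambda(H^0_{\M}(L^{I'}(R')))$. The equality $e_i=\widetilde e_i$ you invoke holds only for $i\le\dim R'$, and here $d>\dim R'$.

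The hypothesis is used precisely to kill $\widetilde e_d(I')$. Since $e_2(I')=\cdots=e_{d-1}(I')=0$ in the Cohen--Macaulay ring $R'$, Puthenpurakal's theorem \cite[Theorem 6.2]{tjp2} makes $\widetilde G(I')$ Cohen--Macaulay. Then, for a minimal reduction $L$ of $I'$, $0=\widetilde e_2(I')=\sum_{n\ge1}n\,\lambda(\widetilde{I'^{n+1}}/L\widetilde{I'^{n}})$ forces $\widetilde{I'^{n+1}}=L\widetilde{I'^{n}}$ for $n\ge1$, so $\widetilde e_d(I')=0$. Cohen--Macaulayness alone would give only $\widetilde e_d(I')\ge0$, which has the wrong sign for odd $d$.

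Combine this with your identity $(-1)^de_d(I)=(-1)^de_d(I')-\lambda(\mathcal B(x,R))$ and Lemma \ref{comparing}. You get $(-1)^de_d(I)=\lambda(H^0_{\M}(L^{I'}(R')))-\lambda(\mathcal B(x,R))\ge0$ in one step, with no induction and no strengthening. This is the paper's proof. Your $d=2$ argument is the special case of it in which $\widetilde e_2(I')\ge0$ is automatic, because $\widetilde G(I')$ has positive depth in dimension one.
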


\begin{proof}
    Let $x \in I \backslash I^{2}$ be a superficial element 
    for $I,$ which is also a non zero-divisor in $R.$  Set $R'=R/(x)$ and $I'=IR'.$ Then $R'$ is a Cohen-
    Macaulay local ring of dimension $d-1$, and from  Proposition \ref{hibertcoeffsup},
    $e_{i}(I')=e_{i}(I)$ for all $i=0,1,\ldots, d-1.$ 
    Since $e_{2}(I')=e_{3}(I')=\ldots=e_{d-1}(I')=0,$ therefore, 
    from Proposition \ref{vanishinghilbcoeffCM}, $\depth G(I')=0$ or $d-1.$ 
     If $\depth G(I')=d-1,$ then by Sally's descent $\depth G(I)=d.$ 
    Thus, from \cite[Corollary 2(ii)]{marley}, we have $e_{d}(I) = 0.$
Suppose that $\depth G(I')=0.$ Since 
    $e_{2}(I')=e_{3}(I')=\ldots=e_{d-1}(I')=0,$ therefore, from \cite[Theorem 6.2]{tjp2}, $\widetilde{G}(I')$ is Cohen-Macaulay with minimal multiplicity.
   
 By \cite [Theorem 2.5]{rv}, we have
\begin{equation*}
    \widetilde{e}_{2}(I')
         =\displaystyle \sum_{n \geq 1}  n\lambda \left(\frac{ \widetilde{I'^{n+1} }}{L\widetilde{I'^{n}}}\right)=0.
\end{equation*}

    This implies that $\widetilde{I'^{n+1}} = L \widetilde{I'^{n}}$ for all $n \geq 1$, where $L$ is a minimal reduction of $I'$. Hence it follows from \cite [Theorem 2.5]{rv}, 
\begin{equation*}
    \widetilde{e}_{d}(I')
         =\displaystyle \sum_{n \geq d-1}  \binom{n}{d-1}\lambda \left(\frac{ \widetilde{I'^{n+1} }}{L\widetilde{I'^{n}}}\right)=0.
\end{equation*}
    Again by \cite[1.5(b)]{tjp}, we have
 \begin{equation} \label{111}
        e_{d}(I')
        =\widetilde{e}_{d}(I')+(-1)^{d}\displaystyle
            \sum_{n \geq 0}\lambda\left(\frac{\widetilde{I'^{n+1}}}{I'^{n+1}}\right)
        =(-1)^{d}\displaystyle\sum_{n \geq 0}\lambda\left(\frac{\widetilde{I'^{n+1}}}{I'^{n+1}}\right)=(-1)^{d}\lambda\left(H^{0}_{\M}\left(L^{I'}(R')\right)\right).
    \end{equation}

   And, by Proposition \ref{hibertcoeffsup} (iv), we have 
    \begin{equation} \label{222}
        e_{d}(I')
        =e_{d}(I)+(-1)^{d}\displaystyle\sum_{n \geq 0}\lambda(\mathcal{B}(x,R)). 
    \end{equation}
    Therefore, from equations (\ref{111}), (\ref{222}) and Lemma \ref{comparing}, 
    we have \[(-1)^{d}e_{d}(I)=\lambda\left(H^{0}_{\M}\left(L^{I'}(R')\right)\right)- \lambda(\mathcal{B}(x,R)) \geq 0.\qedhere\] 
\end{proof}

We conclude this section with an example which illustrates Proposition \ref{edresult}.

\begin{example}
\label{example e4=1}
Let $R=\mathbb{Q}[[x,y,z,t]]$ where $x,y,z,t$ are variables. For any integer $r \geq 2,$ let $I = (x^r-y^r, y^r-z^r, xy, xz, yz, t)$, then  $e_2(I)=e_3(I)=0$ and $e_{4}(I)=1.$
 \end{example}
\begin{proof} 
   We need to show that $(t) \cap I^n = (t)I^{n-1}$  for all $n \geq 1$. Write $I = J + (t),$ where $J = (x^r-y^r, y^r-z^r, xy, xz, yz)$. Then for any $n \geq 1,$ we have 
$I^n = J^n + (t) J^{n-1} + \cdots (t^{n-1})J + (t^n)$. Clearly, $(t) I^{n-1} \subseteq (t) \cap I^n.$  Let $a \in (t) \cap I^n$, then $a = bt,$ and $a = bt = c_{n} + c_{n-1}t + \cdots c_1 t^{n-1} + c_0 t^n,$ where $c_i \in J^i$ for all $i=0,\ldots n $. 
Hence $c_n \in (t) \cap J^n = t(J^n:t) = t J^n $. This implies that $b \in I^{n-1}$. Therefore, $a \in (t) I^{n-1},$ this implies $(t) \cap I^n = (t)I^{n-1}$ for all $n \geq 1.$ Thus, $\depth G(I) \geq 1.$ From Example \ref{example 3.8}, and Sally's descent, we have $\depth G(I)=1$. Since $t^{\star}$ is a non zero-divisor in $G(I)$, thus, we have:
\begin{eqnarray*}  
\lambda \left( \frac{R}{I^n} \right) &=&\sum_{j=1}^n \lambda \left( \frac{I^{j-1}}{I^j} \right) \\
&=& \sum_{j=1}^n\lambda \left( \frac{R}{I^j + (t)} \right) \text{(by Singh's formula \cite{singh})} \\ 
&=& \sum_{j=2}^n \left[  (3r+2) \binom{j+1}{2} - 4 \binom{j+2}{3}\right] + \lambda\left(\frac{R}{I}\right)  \mbox{ (from Example \ref{example 3.8})}\\ 
&=& \sum_{j=1}^n \left[  (3r+2) \binom{j+1}{2} - 4 \binom{j+2}{3}\right] + \lambda\left(\frac{R}{I}\right)-(3r-2)\\
&=& (3r+2) \binom{n+3}{4} - 4 \binom{n+2}{3} + 1.
\end{eqnarray*}
Hence, $e_{2}(I)=e_{3}(I)=0$ and $e_{4}(I)=1.$
\end{proof}

\section*{Acknowledgement}
The first author is partially supported by a grant from Infosys Foundation and ANRF  MATRICS grant (File no. MTR/2023/000661). 
 The third author acknowledges the support of the Government of India through the Prime Minister's Research Fellowship during the course of this work.

\providecommand{\bysame}{\leavevmode\hbox to3em{\hrulefill}\thinspace}
\providecommand{\MR}{\relax\ifhmode\unskip\space\fi MR }
 %\MRhref is called by the amsart/book/proc definition of \MR.
\providecommand{\MRhref}[2]{
  \href{http://www.ams.org/mathscinet-getitem?mr=#1}{#2}
}

\end{document}